\newcommand{\eref}[1]{(\ref{#1})}
\newcommand{\cut}{\right.\\\left.}
\newcommand{\R}{{\mathbb{R}}}
\newcommand{\C}{{\mathbb{C}}}
\newcommand{\Q}{{\mathfrak{Q}}}
\newcommand{\rmx}{{\rho_{M}}}
\newcommand{\el}{{\mathcal{L}}}
\newcommand{\dom}{\mathcal{D}}
\newcommand{\K}{\mathcal{K}}
\newcommand{\im}{\mathrm{Im}\,}
\newcommand{\re}{\mathrm{Re}\,}
\newcommand{\oh}{\mathcal{O}}
\newcommand{\op}{\left(-\partial_t^2+\partial_r^2+\frac{1}{r}\partial_r\right)}
\newcommand{\opR}{\partial_R^2+\frac{1}{R}\partial_R}
\newcommand{\opr}{\partial_r^2+\frac{1}{r}\partial_r}
\newcommand{\ept}{\tilde{\epsilon}}
\newcommand{\s}[3]{S^{#1}\left(R^{#2}(\log R)^{#3}\right)}
\newcommand{\sq}[4]{S^{#1}\left(R^{#2}(\log R)^{#3},\Q_{#4}\right)}
\newcommand{\sqp}[4]{S^{#1}\left(R^{#2}(\log R)^{#3},\Q_{#4}'\right)}
\newcommand{\isq}[4]{IS^{#1}\left(R^{#2}(\log R)^{#3},\Q_{#4}\right)}
\newcommand{\isqp}[4]{IS^{#1}\left(R^{#2}(\log R)^{#3},\Q_{#4}'\right)}
\newcommand{\tl}[1]{\frac{1}{(t\lambda)^{#1}}}
\newcommand{\lr}[1]{L_{\rho}^{2,#1}}
\newcommand{\nlr}[2]{\left\lVert #1 \right\rVert_{\lr{#2}}}
\newcommand{\llr}[2]{L^{\infty,#1}\lr{#2}}
\newcommand{\nllr}[3]{\left\lVert #1 \right\rVert_{\llr{#2}{#3}}}
\newcommand{\hr}[1]{H_{\rho}^{#1}}
\newcommand{\nhr}[2]{\left\lVert #1 \right\rVert_{\hr{#2}}}
\newcommand{\lhr}[2]{L^{\infty,#1}\hr{#2}}
\newtheorem{thm}{Theorem}[section]
\newtheorem{lem}[thm]{Lemma}
\newtheorem{pp}[thm]{Proposition}
\newtheorem{cor}[thm]{Corollary}
\newtheorem{dfn}[thm]{Definition}
\numberwithin{equation}{section}
\title{A construction of blow up solutions for co-rotational wave maps}
\author{C\u at\u alin I. C\^arstea}
\date{}
\begin{document}
\maketitle

\begin{abstract}
The existence of co-rotational finite time blow up solutions to the wave map problem from $\mathbb{R}^{2+1}\to N$, where $N$ is a surface of revolution with metric $d\rho^2+g(\rho)^2d\theta^2$, $g$ an entire function, is proven. These are of the form $u(t,r)=Q(\lambda(t)t)+\mathcal{R}(t,r)$, where $Q$ is a time independent solution of the co-rotational wave map equation $-u_{tt}+u_{rr}+r^{-1}u_r=r^{-2}g(u)g'(u)$, $\lambda(t)=t^{-1-\nu}$, $\nu>1/2$ is arbitrary, and $\mathcal{R}$ is a term whose local energy goes to zero as $t\to0$.
\end{abstract}

\section{Introduction}

In the following wave maps (see \cite{SS}) from $\R^{2+1}$ into a surface of revolution $N$ (with some restrictions on the metric that will be made explicit below), which are also co-rotational, will be considered. The wave map equation reduces in this case to: 
\begin{equation}
-u_{tt}+u_{rr}+\frac{1}{r}u_r=\frac{1}{r^2}f(u),
\end{equation}
with $r>0$ and where the right hand side is related to the metric of $N$ (see bellow). This equation will be shown to have blow up solutions 
(solutions for which the $||u||_{\dot{H}^{1/2}}$ norm goes to infinity in finite time) with initial data $(u,u_t)$ in $H^{1+\delta}\times H^\delta$, for some $\delta>0$.

The energy 
\begin{equation}
\mathcal{E}(u)=\int_0^\infty\frac{1}{2}\left[(\partial_t u)^2+(\partial_r u)^2+\frac{g(u)^2}{r^2}\right]r\;dr
\end{equation}
is preserved and the problem is energy critical in the sense that the scaling $u\to u(\lambda t,\lambda r)$ leaves $\mathcal{E}(u)$ invariant. If the local energy with respect to the origin is defined to be
\begin{equation}
\mathcal{E}_{loc}(u)=\int_{r<t}\frac{1}{2}\left[(\partial_t u)^2+(\partial_r u)^2+\frac{g(u)^2}{r^2}\right]r\;dr,
\end{equation}
then it is known (see \cite{ST}) that the solution $u$ will blow up at the origin, as $t\to0$, iff 
\begin{equation}
\liminf_{t\downarrow0}\mathcal{E}_{loc}(u)(t)>0.
\end{equation}

In \cite{S2} Struwe has shown that for solutions $u$ with $C^\infty$ data that have blow up at $t_0$ there exist sequences $r_i\downarrow0$ and 
$t_i\uparrow t_0$ such that $r_i/t_i\to0$ and $u_i(t,r)=u(t_i+r_it,r_ix)\to u_\infty(x)$, where $u_\infty$ is a non-constant time indepedent solution of the wave map equation. This motivates the construction detailed in this paper which 
 produces a solution of the wave map equation which inside the light cone $r<t$ is of the form
\begin{equation}
u(t,r)=Q(\lambda(t) r)+u^e(t,r)+\epsilon(t,r),
\end{equation}
where $Q$ is a finite energy, non-trivial stationary solution of the wave map equation (a harmonic map) and $\lambda(t)=t^{-1-\nu}$, $\nu>1/2$. The first term is the one for which 
\begin{equation}
\liminf_{t\downarrow0}\mathcal{E}_{loc}(Q(\lambda(t) r))(t)>0.
\end{equation}
The second term is ``large'', but does not cancel the energy concentration of the first term. The last term is ``small''.

The proof of the main result (Theorem \ref{the-theorem} bellow) follows very closely the work of Krieger, Schlag, and T\u ataru (\cite{KST}) in the particular case when the surface of revolution $N$ is the sphere. Indeed, certain portions of this paper are nearly identical to the ones in \cite{KST}.

The section \ref{ap-sol} here corresponds to section 3 in \cite{KST} and it deals with iteratively constructing corrections to $u_0=Q(\lambda(t)r)$, which will form the $u^e$ term. The procedure is split into four steps which alternate constructions of additive corrections (by two different methods) with estimations of the errors made. Here and in Appendix \ref{appendix-A} is where most of the original contribution of the paper is concentrated. One of the differences from \cite{KST} is in the spaces introduced in subsection \ref{somespaces} below. Though only slightly changed, the definitions given here should also be used to replace the ones in \cite{KST} in order to make some of the computations there meaningful. The computations of the errors corresponding to each of the succesive approximate solutions are also new, as the right hand side term of the wave map equation is more general here. 

Section \ref{pe-eq} corresponds to section 4 in \cite{KST}. In it an equation satisfied by $\epsilon$ is derived. Section \ref{tr-id} (section 6 in \cite{KST}) deals with rewriting this equation as a transport-like equation for the generalized Fourier transform of $\epsilon$ corresponding to a self adjoint operator $\el$, which is a conjugate of the linerization of the spatial part of the wave map equation. The term $\epsilon$ is then obtained in sections 
\ref{fi-eq} and \ref{no-te} (sections 7, 8, and 9 in \cite{KST}) by means of a contraction principle argument. Finally, the proof of Theorem 
\ref{the-theorem} is finalized in section \ref{end}. 

Appendix \ref{appendix-A} corresponds mostly to section 5 in \cite{KST} and it contains an analysis of the spectral theory of the operator $\el$ mentioned above. This is based on results by Gesztesy and Zinchenko (\cite{GZ}) on the spectral theory of Schr\"odinger operators with certain singular potentials. It is due to the fact that the same expansions (see Proposition \ref{phi-expansion-proposition}) can be derived for the generalized Fourier basis of $\el$ as in the particular case of $N=S^2$ that  sections \ref{tr-id}--\ref{end} are essentially identical to their correspondents in \cite{KST}. The original contribution here lies mostly in the proof of Proposition \ref{phi-expansion-proposition}. The Lemma \ref{lel2} is also new as it deals with establishing the properties of a certain convenient system of fundamental solutions for $\el$. This was not necessary in \cite{KST} since there explicit formulas for these solutions are available. The results of Lemma \ref{lel2} are essential for the first step of the iterative procedure of section \ref{ap-sol}.

See also the introduction to \cite{KST} for a discussion of the history of the problem and a more in depth analysis of the motivation for the method. Krieger, Schlag and T\u ataru have also applied the same method to the $H^1(\R^3)$ critical focusing semilinear wave equation in \cite{KST2} and to the 
critical Yang--Mills problem in \cite{KST3}.

I would like thank Prof. C. Kenig and Prof. W. Schlag. Fruitful discussions with both have made this work possible.

\section{Setup}\label{setup}

\subsection{The Manifold}

Let $N$  be a compact surface of revolution, whith Riemannian metric 
\begin{equation}ds^2=d\rho^2+g(\rho)^2d\theta^2.\end{equation}
 If $N$ is produced by rotating the graph of the function 
\begin{equation}y=y(x),\quad y(0)=0,\quad y(x_{M})=0,\end{equation}
 around the $x$-axis, then $\rho$ is the arclength on the graph of the function, $d\rho^2=dx^2+dy^2$. Also,
$y(\rho)=g(\rho)$, hence $dx^2=d\rho\sqrt{1-g'(\rho)^2}$ and $|g'(\rho)|<1$ for any $\rho\in(0,\rmx)$.

In order for the graph of $y=y(x)$ to generate a surface of revolution, it has to be true that ${dy}/{dx}\to\infty$, as $x\to0^+$, and 
${dy}/{dx}\to-\infty$, as $x\to {x_{M}}^-$. Since 
\begin{equation}\frac{dy}{dx}={g'(\rho)}/{\sqrt{1-g'(\rho)^2}},\end{equation}
 it follows that $g'(\rho)\to1$, as $\rho\to0^+$, and
$g'(\rho)\to-1$, as $\rho\to\rmx^-$. It also has to be true that $g$ is an odd function of $\rho$ and of $(\rmx-\rho)$. Therefore it can be extended to a smooth periodic function of period $2\rmx$. 

Throughout this paper, the function $g$ is assumed to have the folowing properties:
\begin{itemize}
\item[i)] $g:\R\to\R$ is entire;
\item[ii)] $g$ is an odd function of $\rho$ and of $\rmx-\rho$;
\item[iii)] $|g'(\rho)|<1$ for all $\rho\in(0,\rmx)$;
\item[iv)] $g'(0)=1$, $g'(\rmx)=-1$.
\end{itemize}
Note then that $g$ can be written as $g(\rho)=\rho G(\rho^2)$, or as $g(\rho)=(\rmx-\rho) \widetilde G\left((\rmx-\rho)^2\right)$, where $G$ and $\widetilde G$ are entire functions, $G(0)=1$, $\widetilde G(0)=-1$.

Let $f(\rho)=g(\rho)g'(\rho)$. This function is also entire, odd, and can be written as $f(\rho)=\rho F(\rho^2)$, or as
$f(\rho)=(\rmx-\rho)\widetilde F\left((\rmx-\rho)^2\right)$, where $F$ and $\widetilde F$ are entire functions, $F(0)=1$, $\widetilde F(0)=1$.

\subsection{The Equation}

Co-rotational wave maps from $\R^{2+1}$ into $N$ are of the form $(t,r,\theta)\to(u(t,r),\theta)$, where $u$ satisfies the following equation:
\begin{equation}\label{wm}
-\partial_t^2 u+\partial_r^2 u+\frac{1}{r}\partial_r u=\frac{f(u)}{r^2}.
\end{equation}
The energy of $u$ is
\begin{equation}
\mathcal{E}(u)=\int_0^\infty\frac{1}{2}\left[(\partial_t u)^2+(\partial_r u)^2+\frac{g(u)^2}{r^2}\right]r\;dr
\end{equation}
and it is constant in time.

\subsection{The Harmonic Map}

Note that for any stationary solution $u$ of \eref{wm}, the following quantity is independent of $r$:
\begin{equation}
\left(r\partial_r u\right)^2-g(u)^2=C.
\end{equation}
If such a solution is to have finite energy, then it is necessary that $C=0$. It follows then that either $r\partial_ru=g(u)$, or $r\partial_ru=-g(u)$.

A stationary solution of the equation (\ref{wm}) is called a harmonic map. As seen just above, harmonic maps with finite energy are solutions of one of two first order ODE and therefore can be specified uniquely by a choice of sign in $r\partial_ru=\pm g(u)$ and the value they take at $r=1$ (for example). Let $Q$ be the solution of:
\begin{equation}\label{hm}
r\partial_r Q=g(Q),\quad Q(1)=1.
\end{equation}
It is clear that $\lim_{r\to0^+}Q(r)=0$, and $\lim_{r\to\infty}Q(r)=\rmx$. With the ansatz $Q(r)=r\mathcal{Q}(r^2)$, equation (\ref{hm}) becomes
\begin{equation}
2r^2\partial_{r^2}\mathcal{Q}(r^2)=\mathcal{Q}(r^2)\left[G(r^2\mathcal{Q}(r^2)^2)-1\right],
\end{equation}
therefore $\mathcal{Q}$ must be an analytic function of $r^2$, 
$\mathcal{Q}(0)>0$. 

Similarly, notice that with the change of variable $l=1/r$, equation (\ref{hm}) can be written as:
\begin{equation}
l\partial_l(\rmx-Q)=(\rmx-Q)\widetilde G((\rmx-Q)^2),
\end{equation}
and, proceeding as above, it follows that $Q(r)=\rmx-(1/r)\widetilde{\mathcal{Q}}(1/r^2)$, where $\widetilde{\mathcal{Q}}$ is analytic, 
$\widetilde{\mathcal{Q}}(0)>0$.

From equations (\ref{hm}) and (\ref{wm}) it follows that 
\begin{equation}
Q''=-\frac{1}{r^2} g(Q)\left(1-g'(Q)\right),
\end{equation}
so $Q'(r)$ is decreasing. Also,
\begin{equation}
\left(r^2 Q'\right)'=g(Q)\left(1+g'(Q)\right),
\end{equation}
so $r^2Q'(r)$ is increasing.

To summarize,
\begin{lem}\label{lemma-Q}
 The chosen harmonic map has the following properties:
\begin{itemize}
\item[i)] $Q'(r)$ is decreasing and $r^2Q'(r)$ is increasing;
\item[ii)] $Q(r)=r\mathcal{Q}(r^2)$, with $\mathcal{Q}$ a real-analytic function,  $\mathcal{Q}(0)>0$;
\item[iii)]  $Q(r)=\rmx-(1/r)\widetilde{\mathcal{Q}}(1/r^2)$, with $\widetilde{\mathcal{Q}}$ a real-analytic function, 
$\widetilde{\mathcal{Q}}(0)>0$.
\end{itemize}
\end{lem}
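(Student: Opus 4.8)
The plan is to derive each of the three properties directly from the first-order ODE \eref{hm} together with the structural facts about $g$, $G$, and $\widetilde G$ established in the Setup. The key observation is that \eref{hm} is a separable ODE whose right-hand side is entire, so that existence, uniqueness, and analyticity of $Q$ come essentially for free, and the content is in reading off the behavior near $r=0$ and $r=\infty$.

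For part ii), I would substitute the ansatz $Q(r)=r\mathcal{Q}(r^2)$ into \eref{hm}. Using $g(Q)=Q\,G(Q^2)$ (which is valid since $g(\rho)=\rho G(\rho^2)$ with $G$ entire, $G(0)=1$), the equation becomes, after dividing by $r$, the relation $2r^2\partial_{r^2}\mathcal{Q}(r^2)=\mathcal{Q}(r^2)\bigl[G(r^2\mathcal{Q}(r^2)^2)-1\bigr]$ displayed in the text. Writing $s=r^2$, this is an ODE $2s\,\mathcal{Q}'(s)=\mathcal{Q}(s)\bigl[G(s\mathcal{Q}(s)^2)-1\bigr]$ with a regular singular point at $s=0$; since the bracket vanishes at $s=0$, the indicial analysis forces $\mathcal{Q}$ to be analytic at $0$, and one determines $\mathcal{Q}(0)$ from the normalization $Q(1)=1$, i.e. $\mathcal{Q}(1)=1$, after noting $\mathcal{Q}$ stays positive (it cannot vanish, else $Q$ would vanish identically by uniqueness, contradicting $Q(1)=1$). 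A clean way to get analyticity is to solve the fixed-point equation $\mathcal{Q}(s)=\mathcal{Q}(0)\exp\!\left(\int_0^s \frac{G(\sigma\mathcal{Q}(\sigma)^2)-1}{2\sigma}\,d\sigma\right)$, whose integrand is entire in $\sigma$ because $G(\sigma\mathcal{Q}^2)-1$ vanishes to first order at $\sigma=0$. The limits $\lim_{r\to0}Q(r)=0$ and $\lim_{r\to\infty}Q(r)=\rmx$ follow since $Q$ is monotone (as $Q'=g(Q)/r$ and $g(Q)>0$ while $0<Q<\rmx$) and bounded, and the only stationary values of the autonomous flow are the zeros of $g$, namely $0$ and $\rmx$.

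For part iii), I would repeat this with the change of variable $l=1/r$ and $\widetilde Q=\rmx-Q$, using $g(\rho)=(\rmx-\rho)\widetilde G((\rmx-\rho)^2)$ and the fact (from $g$ odd about $\rmx$) that $g(\rmx-\xi)=-g(\rmx+\xi)$ transforms \eref{hm} into $l\partial_l(\rmx-Q)=(\rmx-Q)\widetilde G((\rmx-Q)^2)$, exactly as displayed; the same regular-singular-point argument in the variable $l^2$ then gives $\rmx-Q=(1/r)\widetilde{\mathcal{Q}}(1/r^2)$ with $\widetilde{\mathcal{Q}}$ analytic and $\widetilde{\mathcal{Q}}(0)>0$. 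Finally, for part i), differentiating \eref{hm} and using \eref{wm} for a stationary solution gives $Q''=-r^{-2}g(Q)(1-g'(Q))$; since $0<Q<\rmx$ implies $g(Q)>0$ and $|g'(Q)|<1$ so $1-g'(Q)>0$, we get $Q''<0$, hence $Q'$ decreasing. Likewise $(r^2Q')'=(rg(Q))'=rQ'g'(Q)+g(Q)=g(Q)g'(Q)+g(Q)=g(Q)(1+g'(Q))>0$ by the same sign considerations, so $r^2Q'$ is increasing.

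I expect the only genuine obstacle to be making the analyticity claims in ii) and iii) fully rigorous — i.e. justifying that the formal power series solution of the regular-singular ODE actually converges — but this is standard: one either invokes the analytic dependence of solutions of $s\mathcal{Q}'=h(s,\mathcal{Q})$ near a regular singular point with the relevant eigenvalue condition, or one directly verifies convergence of the exponential-integral fixed point above using that $G$ is entire. Everything else is a short computation from the two first-order ODEs and the sign constraints on $g$ and $g'$ on $(0,\rmx)$.
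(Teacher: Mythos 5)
Your proposal follows the same route as the paper: part i) via the identities $Q''=-r^{-2}g(Q)(1-g'(Q))$ and $(r^2Q')'=g(Q)(1+g'(Q))$ together with the sign constraints on $g,g'$ over $(0,\rmx)$, and parts ii), iii) via the ansatz $Q=r\mathcal{Q}(r^2)$ and the inversion $l=1/r$ leading to the same regular-singular ODEs the paper displays. The only difference is that you flesh out the analyticity and the limits $Q\to0,\rmx$ (via the exponential fixed-point and monotonicity/boundedness) which the paper simply asserts; this is a welcome but not structurally different completion of the argument.
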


\subsection{The Theorem}

Define the local energy of a solution $u$ of \eref{wm} with respect to the origin and at time $t$ to be:
\begin{equation}
\mathcal{E}_{loc}(u)(t)=\int_{r<t}\left[\frac{1}{2}(u_t^2+u_r^2)+\frac{g(u)^2}{2r^2}\right]r\;dr
\end{equation}

\begin{thm}\label{the-theorem}
Let $\nu>1/2$ be arbitrary and $t_0>0$ be suficiently small. Define $\lambda(t)=t^{-1-\nu}$ and fix a large integer $N$. Then there exists a function
$u^e$ satisfying
\begin{equation}
u^e\in C^{\nu+1/2-}(\{t_0>t>0,|x|<t\}),
\end{equation}
\begin{equation}
\quad\mathcal{E}_{loc}(u^e)(t)\lesssim(t\lambda(t))^{-2}|\log t|^2\text{ as }t\to0,
\end{equation}
and a solution $u$ of \eref{wm} in $[0,t_0]$ which is of the form
\begin{equation}
u(t,r)=Q(\lambda(t)r)+u^{e}(t,r)+\epsilon(t,r),\quad 0\leq r\leq t,
\end{equation}
where $\epsilon$ decays at $t=0$. More precisely,
\begin{equation}
\epsilon\in t^N H_{loc}^{1+\nu-}(\R^2),\quad\epsilon_t\in t^{N-1}H_{loc}^{\nu-}(\R^2),\quad
\mathcal{E}_{loc}(\epsilon)(t)\lesssim t^N\text{ as }t\to0,
\end{equation}
with spatial norms that are uniformy controlled as $t\to0$. Also, $u(0,t)=0$ for all $0<t<t_0$. The solution $u(t,r)$ extends as an $H^{1+\nu-}$ solution to all $\R^2$.
\end{thm}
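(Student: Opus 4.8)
The plan is to follow the Krieger--Schlag--T\u ataru scheme, adapting each step to the general surface of revolution $N$. The construction proceeds in three stages: first build the approximate solution $Q(\lambda(t)r)+u^e(t,r)$, then set up and solve the equation for the error $\epsilon$, and finally patch the interior solution to an exterior solution on all of $\R^2$.

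\textbf{Stage 1: the approximate solution.} Starting from $u_0(t,r)=Q(\lambda(t)r)$, I would compute the error $e_0=\op u_0-r^{-2}f(u_0)$ produced by plugging $u_0$ into \eref{wm}. Since $Q$ solves the stationary equation, $e_0$ comes only from the time derivatives hitting $\lambda(t)$, so it is of size $\dot\lambda^2$ and $\ddot\lambda$ times profile functions of $R=\lambda r$. I would then iteratively add corrections $v_k$ so that $u_k=u_0+\sum_{j\le k}v_j$ has an error $e_k$ that improves by a factor of roughly $(t\lambda)^{-2}$ at each step (the section \ref{ap-sol} procedure with four alternating steps). The two correction methods are: (a) solving elliptic-type equations in the self-similar variable $a=t\lambda$ using the fundamental system from Lemma \ref{lel2}, which handles the region where $R\lesssim a$; and (b) solving the wave equation via the d'Alembertian in the region $R\gtrsim 1$. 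The key technical device is the hierarchy of symbol classes $\s{}{}{}$, $\sq{}{}{}{}$, $\is{}{}{}$ etc.\ introduced in subsection \ref{somespaces}, which track the precise $R$- and $\log R$-dependence of each correction and its error; the generality of $f=gg'$ means these symbol computations must be redone from scratch rather than using the explicit $N=S^2$ formulas. After $N_1$ (a fixed number depending on $N$) steps one obtains $u^e=\sum_{j\ge1}v_j$ with the claimed regularity $C^{\nu+1/2-}$ and the local energy bound $\mathcal{E}_{loc}(u^e)\lesssim (t\lambda)^{-2}|\log t|^2$, together with a final error $e_{N_1}$ that is small as $t\to0$ — of size $t^{M}$ for $M$ as large as we like.

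\textbf{Stage 2: solving for $\epsilon$.} Writing $u=u_{N_1}+\epsilon$ with $u_{N_1}=Q(\lambda r)+u^e$, the function $\epsilon$ satisfies (section \ref{pe-eq}) a forced linear wave equation $\op\epsilon - r^{-2}f'(u_{N_1})\epsilon = -e_{N_1} + N(\epsilon)$ with $N$ collecting the quadratic-and-higher nonlinear terms in $\epsilon$. After the conjugation $\epsilon\mapsto r^{1/2}\epsilon$ and passing to the variable adapted to $\lambda$, the spatial operator becomes the self-adjoint Schr\"odinger operator $\el$ analyzed in Appendix \ref{appendix-A}; using the Gesztesy--Zinchenko singular-potential theory one obtains a generalized Fourier transform diagonalizing $\el$ and, crucially, the expansions of the generalized eigenfunctions in Proposition \ref{phi-expansion-proposition} that match those in \cite{KST}. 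On the Fourier side the equation for $\hat\epsilon(t,\xi)$ becomes a transport equation with a source (section \ref{tr-id}), and one solves it by a contraction mapping argument in the weighted spaces $\lhr{}{}$, $\llr{}{}{}$ of sections \ref{fi-eq}--\ref{no-te}; the smallness of $e_{N_1}$ (from choosing $N_1$ large relative to $N$) is what closes the contraction and yields the decay $\epsilon\in t^N H^{1+\nu-}_{loc}$, $\epsilon_t\in t^{N-1}H^{\nu-}_{loc}$, $\mathcal{E}_{loc}(\epsilon)\lesssim t^N$. Because Proposition \ref{phi-expansion-proposition} gives exactly the same structure as in the sphere case, this entire stage is essentially verbatim from \cite{KST}.

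\textbf{Stage 3: globalization and conclusion.} The construction above produces $u$ only inside the light cone $r<t$. To extend to all of $\R^2$ (section \ref{end}), I would prescribe smooth, small, compactly supported exterior data at some time $t_1\le t_0$ matching $u$ on the cone, solve \eref{wm} outside the cone by finite speed of propagation and standard local well-posedness for the (energy-subcritical-at-this-regularity) equation with data in $H^{1+\nu-}\times H^{\nu-}$, and check the solution persists on $[0,t_1]$; the boundary condition $u(0,t)=0$ for $0<t<t_0$ follows from tracking the behavior of $u_0+u^e+\epsilon$ as $r\uparrow t$. I expect the main obstacle to be Stage 1, specifically Proposition \ref{phi-expansion-proposition} and Lemma \ref{lel2}: establishing that a convenient fundamental system for $\el$ exists with the right asymptotics at $0$ and $\infty$, and that the generalized eigenfunctions admit the KST-type expansions, when all one knows about $g$ is that it is entire with the stated parity and boundary behavior — here one has no closed-form solutions and must instead exploit the analytic structure $g(\rho)=\rho G(\rho^2)$, $g(\rho)=(\rho_M-\rho)\widetilde G((\rho_M-\rho)^2)$ and perturbative/Frobenius arguments around the regular singular points, which is where the bulk of the new work lies.
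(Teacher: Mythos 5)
Your proposal follows the paper's route almost exactly: iterative symbol-calculus construction of $u^e$, conjugation to the self-adjoint $\el$, distorted Fourier transform and transport equation, contraction in the weighted $\lhr{}{}$/$\llr{}{}{}$ spaces, and extension by finite speed of propagation, with the new work correctly localized in Lemma~\ref{lel2}, Proposition~\ref{phi-expansion-proposition}, and the error computations using $g$ entire. One small correction worth flagging: you have the two alternating correction steps swapped with respect to their variables — the step that invokes the fundamental system of Lemma~\ref{lel2} is the \emph{$R$-ODE} (Step~1, with $a,b,b_1,b_2$ frozen, $L=\opR-f'(u_0)/R^2$), while it is the wave-type step (Step~3) that reduces to the self-similar operator $L_\beta$ acting in $a$; and the self-similar variable is $a=r/t$, not $t\lambda(t)$.
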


\section{Approximate Solutions}\label{ap-sol}

Let $\lambda(t)=t^{-1-\nu}$, $\nu>1/2$, $R=\lambda(t)r$. In this section a sequence $u_k$ of approximate solutions of \eref{hm} will be constructed. 
For each of these the corresponding error is defined to be: 
\begin{equation}
e_k=\op u_k-\frac{1}{r^2}f(u_k).
\end{equation}
The first element of the sequence is $u_0(t,r)=Q(R)$. For a large enough $N$, $u_N-u_0$ will be the $u^e$ of Theorem \ref{the-theorem}.

To motivate the particular construction, suppose that the sought solution of \eqref{wm} is of the form:
\begin{equation}
u=u_k+\epsilon,
\end{equation}
with $\epsilon$ small. Then
\begin{equation}
\left(-\partial_t^2+\partial_r^2+\frac{1}{r}\partial_r \right)\epsilon-\frac{1}{r^2}f'(u_k)\epsilon\approx-e_k.
\end{equation}
Two different approximations of this linearized equation will be used. The first assumes  the time derivative to be unimportant and also approximates $u_k\approx u_0$, replacing $f'(u_k)$ by $f'(u_0)$. The second one retains the time derivative, but assumes that $u_k\approx u_0(\infty)=\rmx$, replacing 
$f'(u_k)$ by $1$, as would be the case if $r\approx t$ and $t$ would be close to zero. Succesive corrections $v_k=u_k-u_{k-1}$ to the approximate solutions will be constructed using these two ideas alternatively, that is the $v_k$'s will be required to solve
\begin{equation}
\left(\partial_r^2+\frac{1}{r}\partial_r -\frac{1}{r^2}f'(u_0)\right)v_{2k+1}=-e_{2k}^0
\end{equation}
and
\begin{equation}\label{equation-v-2k}
\left(-\partial_t^2+\partial_r^2+\frac{1}{r}\partial_r -\frac{1}{r^2}\right)v_{2k+2} =-e_{2k+1}^0,
\end{equation}
with zero Cauchy data at $r=0$ and
where $e_k^0$ is the ``principal part'' of $e_k$, in a sense that will be detailed bellow. 

The conclusion of this section requires the introduction of certain spaces of functions on the light cone. It can be found stated in equations 
\eref{induction-1}--\eref{induction-4}.

This section mirrors section 3 of \cite{KST}. Step 3, in particular is virtually identical to the reference as \eref{equation-v-2k} does not depend on the particular geometry of the surface of revolution. The main difference lies in error estimates of Steps 2 and 4. It is in the course of these two steps that the assumption that $g$ is entire is necessary. 

Note that in the definitions of the spaces of functions in the following subsection three ``$b$'' parameters are used ($b$, $b_1$, $b_2$), instead of one as in \cite{KST}. The definitions given in \cite{KST} should be replaced by the ones bellow. 
Certain other typos have been fixed here.

\subsection{Some Spaces}\label{somespaces}

Before proceeding with the construction, a few spaces of functions need to be introduced. Let
\begin{equation}
\mathcal{C}_0=\{(t,r):0\leq r\leq t, 0<t<t_0\}
\end{equation}
be  a truncated forward light cone on which the $u_k$'s will be defined.

\begin{dfn}\label{ds1}
For $i \in \mathbb{N}$ let $j(i)=i$ if $\nu$ is irrational, and $j(i)=2i^2$ if $\nu$ is rational.
 $\Q$ is the algebra of continuous functions $q:[0,1]\to\R$ with the following properties:
\begin{itemize}
\item[i)] $q$ is analytic in $[0,1)$ with even expansion at 0;
\item[ii)] near $a=1$ there is an absolutely convergent expansion of the form:
\begin{multline}\label{defq}
q=q_0(a)\\
+\sum_{i=1}^\infty\left( (1-a)^{(2i-1)\nu+\frac{1}{2}}\sum_{j=0}^{j(2i-1)}q_{2i-1,j}(a)(\log(1-a))^j\right.\\\left.
+(1-a)^{2i\nu+1}\sum_{j=0}^{j(2i)}q_{2i,j}(a)(\log(1-a))^j \right)
\end{multline}
with analytic coefficients $q_0$, $q_{i,j}$.
\end{itemize}
\end{dfn}

\begin{dfn}\label{ds2}
With $j(i)$ as above, $\Q'$ is the space of functions $q:[0,1]\to\R$ with the following properties:
\begin{itemize}
\item[i)] $q$ is analytic in $[0,1)$ with even expansion at 0;
\item[ii)] near $a=1$ there is an absolutely convergent expansion of the form:
\begin{multline}\label{defqp}
q=q_0(a)\\
+\sum_{i=1}^\infty\left( (1-a)^{(2i-1)\nu-\frac{1}{2}}\sum_{j=0}^{j(2i-1)}q_{2i-1,j}(a)(\log(1-a))^j\right.\\\left.
+(1-a)^{2i\nu+1}\sum_{j=0}^{j(2i)}q_{2i,j}(a)(\log(1-a))^j \right)
\end{multline}
with analytic coefficients $q_0$, $q_{i,j}$.
\end{itemize}
\end{dfn}

\begin{dfn}\label{ds3}
\begin{itemize}
\item[i)] $\Q_m$ is the sub-algebra of $\Q$ defined by the requirement that $q_{ij}(1)=0$ if $i\geq 2m+1$ and $i$ is odd;
\item[ii)] $\Q_m'$ is the sub-space of $\Q'$ defined by the requirement that $q_{ij}(1)=0$ if $i\geq 2m+1$ and $i$ is odd.
\end{itemize}
\end{dfn}

\begin{lem}\label{ls1}
\begin{itemize}
\item[i)] $\Q\subset\Q'$ and $\Q_m\subset\Q_m'$;
\item[ii)] $\Q_m\subset\Q_{m+1}$, $\Q_m'\subset\Q_{m+1}'$.
\end{itemize}
\end{lem}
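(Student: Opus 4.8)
The plan is to reduce both parts of the lemma to the elementary identity $(1-a)^{(2i-1)\nu+\frac12}=(1-a)\,(1-a)^{(2i-1)\nu-\frac12}$, noting that the even-index blocks $(1-a)^{2i\nu+1}(\cdots)$, the analytic head $q_0$, the regularity requirement at $a=0$, and the bounds $j(i)$ on the powers of $\log(1-a)$ appear identically in Definitions \ref{ds1} and \ref{ds2}: the two definitions differ only in the exponent of $(1-a)$ carried by the odd-index blocks, which is larger by exactly $1$ in \eref{defq} than in \eref{defqp}.

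For the inclusion $\Q\subset\Q'$, I would take $q\in\Q$ with expansion \eref{defq} and, in every odd-index summand, pull a factor $(1-a)$ out of the power $(1-a)^{(2i-1)\nu+\frac12}$ and absorb it into the coefficient, replacing $q_{2i-1,j}(a)$ by $(1-a)\,q_{2i-1,j}(a)$. This new coefficient is again analytic near $a=1$, multiplying the $i$-th block by the bounded factor $(1-a)\in[0,1]$ cannot destroy absolute convergence, and the $\log$-bounds $j(i)$ and the even-index blocks are untouched; hence the result is an expansion of $q$ of exactly the form \eref{defqp}, so $q\in\Q'$. For $\Q_m\subset\Q_m'$: if $q\in\Q_m\subset\Q$, then every odd-index coefficient of the $\Q'$-expansion just produced is of the form $(1-a)$ times an analytic function and therefore vanishes at $a=1$; in particular this holds for the indices $\geq 2m+1$, which is the requirement of Definition \ref{ds3}, so $q\in\Q_m'$ (in fact this shows $\Q\subseteq\Q_0'$).

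Part ii) is then immediate from Definition \ref{ds3}: the set of odd indices $i\geq 2m+1$ contains the set of odd indices $i\geq 2m+3$, so the vanishing constraint defining $\Q_m$ is stronger than the one defining $\Q_{m+1}$, and likewise for $\Q_m'$ versus $\Q_{m+1}'$. I do not anticipate a genuine obstacle here: the whole argument is the one-line identity above plus bookkeeping, and the only thing that must be kept straight is that the odd-index exponent shifts by exactly $1$, so that the extra factor of $(1-a)$ lands in the analytic coefficient and nowhere else. The finer choice of $j(i)$ (linear versus quadratic in $i$ according as $\nu$ is irrational or rational) is irrelevant to the proof of this lemma — it will be needed later, for closure of these algebras under the operations of Section \ref{ap-sol} — beyond its role in making the expansions \eref{defq}, \eref{defqp}, and hence the conditions of Definition \ref{ds3}, well posed.
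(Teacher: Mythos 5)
Your proof is correct and takes essentially the same route as the paper: the paper reduces i) to the identity $(1-a)^{1/2}=(1-a)(1-a)^{-1/2}$ and notes ii) is immediate from Definition \ref{ds3}, exactly the two ingredients you use — you simply spell out the bookkeeping (absorbing the extra factor $(1-a)$ into the analytic coefficient, and observing that this forces vanishing at $a=1$) that the paper leaves implicit.
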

\begin{proof}
Note that the only difference between the definitions of the $\Q$ spaces and the $\Q'$ spaces is that a power $(1-a)^{1/2}$ appears in the fist term inside the bracket in \eref{defq}, while in the same place in \eref{defqp} there is a power $(1-a)^{-1/2}$. i) follows from:
\begin{equation}
(1-a)^{1/2}=(1-a)(1-a)^{-1/2}.
\end{equation}
ii) is obvious from Definition \ref{ds3}.
\end{proof}

\begin{dfn}\label{ds4}
$\s{m}{k}{l}$ is the class of functions $v:[0,\infty)\to\R$ with the following properties:
\begin{itemize}
\item[i)] $v$ vanishes of order $m$ at $R=0$, and $v(R)=R^m\sum_{j=0}^\infty c_jR^{2j}$ for small $R$;
\item[ii)] $v$ has a convergent expansion near $R=\infty$ of the form:
\begin{equation}
v(R)=\sum_{0\leq j\leq l+i}c_{ij}R^{k-2i}(\log R)^j.
\end{equation}
\end{itemize}
\end{dfn}

Let $B$, $B_1$, $B_2$ be positive constants to be specified shortly.

\begin{dfn}\label{ds5}
$\sq{m}{k}{l}{n}$ is the class of analytic functions $v:[0,\infty)\times[0,1]\times[0,B]\times[0,B_1]\times[0,B_2]\to\R$ with the following properties:
\begin{itemize}
\item[i)] $v$ is analytic as a function of $R$, $b$, $b_1$, $b_2$
\begin{equation}
v:[0,\infty)\times[0,B]\times[0,B_1]\times[0,B_2]\to\Q_n;
\end{equation}
\item[ii)] $v$ vanishes of order $m$ at $R=0$ and has a convergent expansion
\begin{equation}
v(R,a,b,b_1,b_2)=R^m\sum_{j=0}^\infty c_j(a,b,b_1,b_2)R^{2j};
\end{equation}
\item[iii)] $v$ has a convergent expansion near $R=\infty$ of the form
\begin{equation}
v(R,\cdot,b,b_1,b_2)=\sum_{0\leq j\leq l+i}c_{ij}(\cdot,b,b_1,b_2)R^{k-2i}(\log R)^j.
\end{equation}
where the coefficients $c_{ij}:[0,B]\times[0,B_1]\times[0,B_2]\to\Q_n$ are analytic with respect to $b$, $b_1$, $b_2$.
\end{itemize}
$\sqp{m}{k}{l}{n}$ is defined similarly.
\end{dfn}

Here is a list of elementary, but useful, properties of these spaces:

\begin{lem}\label{ls2}
\begin{itemize}
\item[i)] $\sq{m+2}{k}{l}{n}\subset\sq{m}{k}{l}{n}$;
\item[ii)] $\sq{m}{k}{l}{n}\subset\sq{m}{k}{l+1}{n}$;
\item[iii)] $\sq{m}{k}{l}{n}\subset\sq{m}{k+2}{l-1}{n}$;
\item[iv)] $\sq{m}{k}{l}{n}\subset\sqp{m}{k}{l}{n}$.
\end{itemize}
All but the last one are also properties of $\sqp{k}{l}{m}{n}$.
\end{lem}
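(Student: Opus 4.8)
The statement to prove is Lemma \ref{ls2}, listing four containments among the spaces $\sq{m}{k}{l}{n}$ (and their primed analogues): raising the vanishing order at $R=0$, raising the log-exponent bound $l$, trading powers of $R$ for powers of $\log R$ at infinity, and inclusion of the unprimed space into the primed one.

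\medskip

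\textbf{Plan of proof.} Each of the four parts is a direct verification against the three conditions (i)--(iii) in Definition \ref{ds5}, so the strategy is simply to check, for each claimed inclusion, that a function satisfying the defining conditions of the left-hand class also satisfies those of the right-hand class. I would dispose of them in the order (i), (iv), (ii), (iii), since (i) and (iv) are essentially trivial and (ii)--(iii) require a small observation about the index ranges in the expansion at infinity.

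For part (i): if $v$ vanishes to order $m+2$ at $R=0$ with expansion $R^{m+2}\sum_j c_j R^{2j}$, then reindexing the sum exhibits it as $R^m\sum_j c_j' R^{2j}$ vanishing to order $m$; conditions (i) on analyticity in the parameters and (iii) on the expansion at $R=\infty$ are unchanged. For part (iv): the only difference between $\sq{m}{k}{l}{n}$ and $\sqp{m}{k}{l}{n}$ is that in the former the coefficients $c_{ij}$ and the function $v$ take values in $\Q_n$, while in the latter they take values in $\Q_n'$; since $\Q_n\subset\Q_n'$ by Lemma \ref{ls1}(i), the inclusion follows immediately, and the remaining conditions are identical. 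For part (ii): a function in $\sq{m}{k}{l}{n}$ has an expansion at infinity $\sum_{0\le j\le l+i} c_{ij}(\cdot)R^{k-2i}(\log R)^j$; since $\{j:0\le j\le l+i\}\subset\{j:0\le j\le (l+1)+i\}$, the same expansion (padding with zero coefficients) witnesses membership in $\sq{m}{k}{l+1}{n}$. For part (iii): here one rewrites $R^{k-2i}=R^{(k+2)-2(i+1)}$, so setting $i'=i+1$ the expansion becomes $\sum_{i'\ge 1}\sum_{0\le j\le l+(i'-1)} c_{i'-1,j}(\cdot) R^{(k+2)-2i'}(\log R)^j$, and since $l+(i'-1)=(l-1)+i'$ this is exactly of the form required for $\sq{m}{k+2}{l-1}{n}$ — one should note that the $i'=0$ term is simply absent, which is permitted. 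The final sentence, that (i)--(iii) also hold for the primed spaces, follows because none of these three arguments used any property of $\Q_n$ not shared by $\Q_n'$.

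\medskip

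\textbf{Main obstacle.} There is essentially no obstacle: the lemma is bookkeeping, and the only point requiring the slightest care is the index shift in part (iii), where one must observe that Definition \ref{ds5}(iii) permits the expansion at infinity to omit low-order terms (i.e.\ the sum over $i$ may start at $i=1$ rather than $i=0$), so that trading two powers of $R$ for one power of $\log R$ genuinely lands in the class with parameters $(k+2,l-1)$. One should also confirm that the analyticity and parameter-dependence conditions in (ii) of the definition are untouched by any of these operations, which is immediate in each case.
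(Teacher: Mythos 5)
The paper states this lemma without proof, treating it as "elementary, but useful." Your blind proof is correct and supplies exactly the routine verification the paper omits: part (i) is absorption of $R^2$ into the Taylor-type expansion at $R=0$ (vanishing to order $m+2$ certainly entails vanishing to order at least $m$); part (iv) reduces directly to $\Q_n\subset\Q_n'$ from Lemma \ref{ls1}; part (ii) is padding by zero coefficients; and part (iii) is the index shift $R^{k-2i}=R^{(k+2)-2(i+1)}$ together with the observation that $l+i=(l-1)+(i+1)$, so the bound on the log-exponent is preserved in the new indexing. Your parenthetical note in part (iii), that the $i'=0$ term may be absent, is the right thing to flag, and your closing remark explains the final sentence of the lemma correctly. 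No gaps.
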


With the notations $R=\lambda(t)r$, $a=r/t=(t\lambda)^{-1}R$, $b=(t\lambda)^{-2}[\log(2+R^2)]^2$, $b_1=(t\lambda)^{-2}[\log(2+R^2)]$,
$b_2=(t\lambda)^{-2}$, if $(t,r)\in\mathcal{C}_0$, then there are positive constants $B$, $B_1$, $B_2$ such that $b\in[0,B]$,
$b_1\in[0,B_1]$, and $b_2\in[0,B_2]$.

\begin{dfn}\label{ds6}
$\isq{m}{k}{l}{n}$ is the class of analytic functions  $w$ defined on the cone $\mathcal{C}_0$ which can be represented as
\begin{equation}
w(t,r)=v(R,a,b,b_1,b_2),\qquad v\in\sq{m}{k}{l}{n}.
\end{equation}
The definition of $\isqp{m}{k}{l}{n}$ is similar.
\end{dfn}

Note that the representations in the above definition are not at all unique.

\subsection{Two Useful Lemmas}

The following results will be useful throughout this section.

\begin{lem}\label{useful-lemma-1}
$f^{(2k)}(Q(R))\in IS^1(R^{-1})$ and $f^{(2k+1)}(Q(R))\in IS^0(1)$.
\end{lem}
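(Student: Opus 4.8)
The plan is to exploit the structure $f(\rho)=\rho F(\rho^2)$ with $F$ entire and the two representations of $Q$ from Lemma \ref{lemma-Q}, together with the chain rule. First I would record that, since $f$ is entire and odd, all even-order derivatives $f^{(2k)}$ are odd functions and all odd-order derivatives $f^{(2k+1)}$ are even functions; moreover, because $f(\rho)=\rho F(\rho^2)$ with $F$ entire, $f^{(2k+1)}(\rho)$ is an entire even function, hence of the form $F_{2k+1}(\rho^2)$ for an entire $F_{2k+1}$, and $f^{(2k)}(\rho)=\rho F_{2k}(\rho^2)$ for an entire $F_{2k}$. I would also record the analogous statement at $\rho=\rmx$: since $f(\rho)=(\rmx-\rho)\widetilde F((\rmx-\rho)^2)$, the same parity bookkeeping gives $f^{(2k+1)}(\rho)=\widetilde F_{2k+1}((\rmx-\rho)^2)$ and $f^{(2k)}(\rho)=(\rmx-\rho)\widetilde F_{2k}((\rmx-\rho)^2)$ with $\widetilde F_j$ entire (here one must track the sign coming from differentiating in $\rho$ versus in $\rmx-\rho$, but it only affects constants).

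Next I would plug in $Q$ near $R=0$ using $Q(R)=R\mathcal{Q}(R^2)$ with $\mathcal{Q}$ real-analytic. Then $Q(R)^2=R^2\mathcal{Q}(R^2)^2$ is a real-analytic even function of $R$ vanishing to order $2$, so for any entire $H$, $H(Q(R)^2)$ is real-analytic and even in $R$. Hence $f^{(2k+1)}(Q(R))=F_{2k+1}(Q(R)^2)$ is real-analytic with an even Taylor expansion at $R=0$ — this is condition i) in Definition \ref{ds4} with $m=0$. Similarly $f^{(2k)}(Q(R))=Q(R)F_{2k}(Q(R)^2)=R\mathcal{Q}(R^2)F_{2k}(Q(R)^2)$ vanishes to order $1$ at $R=0$ with an odd expansion, matching $m=1$.

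Then I would analyze the behavior near $R=\infty$ using $Q(R)=\rmx-R^{-1}\widetilde{\mathcal{Q}}(R^{-2})$, so that $\rmx-Q(R)=R^{-1}\widetilde{\mathcal{Q}}(R^{-2})$, a convergent expansion in powers of $R^{-1}$ starting at $R^{-1}$. For the odd derivatives, $f^{(2k+1)}(Q(R))=\widetilde F_{2k+1}((\rmx-Q(R))^2)$; since $(\rmx-Q(R))^2=R^{-2}\widetilde{\mathcal{Q}}(R^{-2})^2$ is a convergent series in $R^{-2}$ starting at order $R^{-2}$, composing with the entire function $\widetilde F_{2k+1}$ gives a convergent expansion $\sum_{i\geq 0} c_i R^{-2i}$ — i.e. an expansion of the form in Definition \ref{ds4}ii) with $k=0$, $l=0$ (no logarithms at all, so trivially $0\le j\le l+i$). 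For the even derivatives, $f^{(2k)}(Q(R))=(\rmx-Q(R))\widetilde F_{2k}((\rmx-Q(R))^2)=R^{-1}\widetilde{\mathcal{Q}}(R^{-2})\widetilde F_{2k}(R^{-2}\widetilde{\mathcal{Q}}(R^{-2})^2)$, which is $R^{-1}$ times a convergent series in $R^{-2}$, hence of the form $\sum_{i\ge 0} c_i R^{-1-2i}$, matching Definition \ref{ds4}ii) with $k=-1$, $l=0$. This establishes $f^{(2k+1)}(Q(R))\in IS^0(1)$ and $f^{(2k)}(Q(R))\in IS^1(R^{-1})$.

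The only genuine subtlety — and the step I would flag as the main point to get right — is verifying that the formal power-series manipulations (composition of the entire functions $F_j$, $\widetilde F_j$ with the convergent series for $Q(R)^2$ near $0$ and for $(\rmx-Q(R))^2$ near $\infty$) actually converge on a full neighborhood of $R=0$ and $R=\infty$ respectively, so that the resulting objects lie in the analytic class $IS$ rather than merely having formal expansions. This follows from: $\mathcal{Q}$, $\widetilde{\mathcal{Q}}$ are analytic at $0$, so $Q(R)^2$ and $(\rmx-Q(R))^2$ map a neighborhood of the respective point into a small disc around $0$ in the argument of $F_j$, $\widetilde F_j$; since these are entire, the composition converges there, and a standard rearrangement of absolutely convergent double series gives the claimed single expansions. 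A small amount of care is also needed to confirm that the "intermediate" regime $0<R<\infty$ poses no issue: $f^{(j)}\circ Q$ is a composition of real-analytic functions on $(0,\infty)$, hence real-analytic there, so the global membership in the $IS$ class (which only constrains behavior at the two ends plus analyticity) is immediate.
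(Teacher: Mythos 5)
Your proof is correct and follows essentially the same route as the paper's: the parity of $f^{(2k)}$, $f^{(2k+1)}$ (odd/even in both $\rho$ and $\rmx-\rho$) combined with the two analytic representations of $Q$ from Lemma \ref{lemma-Q}. The paper compresses this to a few lines; you have simply spelled out the same composition-of-series argument in full detail, including the convergence check that the paper takes for granted.
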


\begin{proof}
$f^{(2k)}(\rho)$ has an odd expansion in $\rho$ and also in $(\rmx-\rho)$. Plugging in $Q$ the first half of the result follows from Lemma \ref{lemma-Q}. The case of $f^{(2k+1)}$ is similar, but with even expansions.
\end{proof}

\begin{lem}\label{useful-lemma-2}
If 
\begin{equation}
z\in\tl{2}\isq{1}{}{}{},
\end{equation}
then
\begin{equation}\label{q-plus-z-2k}
f^{(2k)}(Q(R)+z(R))\in \tl{2}\isq{1}{}{}{}
\end{equation} and 
\begin{equation}\label{q-plus-z-2k+1}
f^{(2k+1)}(Q(R)+z(R))\in IS^0(1,\Q).
\end{equation}
\end{lem}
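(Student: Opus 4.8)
The plan is to exploit the Taylor expansion of $f^{(2k)}$ and $f^{(2k+1)}$ around $Q(R)$ and use the fact that the spaces $IS^m(R^k(\log R)^l,\Q)$ and the prefactor $(t\lambda)^{-2}$ behave well under multiplication and composition with analytic functions. First I would write, by Taylor's theorem,
\begin{equation}
f^{(2k)}(Q+z)=\sum_{j=0}^{J} \frac{f^{(2k+j)}(Q)}{j!}\,z^j + \text{remainder},
\end{equation}
and similarly for $f^{(2k+1)}$, keeping in mind that since $f$ is entire the series in fact converges and the remainder can be handled by the same mechanism. By Lemma \ref{useful-lemma-1}, $f^{(2k+2j)}(Q(R))\in IS^1(R^{-1})\subset IS^0(1,\Q)$ and $f^{(2k+2j+1)}(Q(R))\in IS^0(1)$; in particular every derivative of $f$ evaluated at $Q(R)$ lies in $IS^0(1,\Q)$ (absorbing the $R^{-1}$ decay into the $R^0$ class via Lemma \ref{ls2}(iii) or directly). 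So the $j=0$ term of the $f^{(2k)}$ expansion is $f^{(2k)}(Q)\in IS^1(R^{-1})\subset\frac{1}{(t\lambda)^{2}}IS^1(\cdot)$ only after we observe that it is already better than required; but the point is the $j\geq 1$ terms each carry at least one factor of $z\in \frac{1}{(t\lambda)^{2}}IS^1$, which supplies the needed $(t\lambda)^{-2}$ and the vanishing of order $1$ at $R=0$.

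The key algebraic step is therefore: products of elements of these $IS$-classes multiply the way the notation suggests, i.e.
\begin{equation}
IS^{m_1}(R^{k_1}(\log R)^{l_1},\Q)\cdot IS^{m_2}(R^{k_2}(\log R)^{l_2},\Q)\subset IS^{m_1+m_2}(R^{k_1+k_2}(\log R)^{l_1+l_2},\Q),
\end{equation}
and the scalar prefactors $(t\lambda)^{-2}$ simply accumulate. Combined with the closure properties in Lemma \ref{ls2}, the $j$-th term $\frac{1}{j!}f^{(2k+j)}(Q)z^j$ lies in $\big(\frac{1}{(t\lambda)^{2}}\big)^{j}IS^{j}(R^{-j},\Q)$. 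For $j\geq 1$ this is contained in $\frac{1}{(t\lambda)^{2}}IS^1(\cdot,\Q)$ because the extra factors $(t\lambda)^{-2}$ are bounded on $\mathcal{C}_0$ (they equal $b_2$ up to constants, hence lie in $[0,B_2]$) and the extra vanishing order and $R$-decay only help. The parity bookkeeping — that $f^{(2k)}$ has an odd expansion in $\rho$ and an odd expansion in $(\rmx-\rho)$, so the even/odd structure in $R$ of $Q(R)=R\mathcal{Q}(R^2)$ and $\rmx-Q(R)=R^{-1}\widetilde{\mathcal{Q}}(R^{-2})$ matches the $IS$-class requirement of an even expansion at $R=0$ — is exactly what makes membership in $\isq{1}{}{}{}$ rather than merely $\isq{0}{}{}{}$ possible for \eref{q-plus-z-2k}; for \eref{q-plus-z-2k+1} the expansions are even, giving membership in $IS^0(1,\Q)$ with no decay gained, which is the stated (weaker) conclusion.

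The main obstacle I expect is the convergence and uniformity of the infinite Taylor series in the $\Q$-valued analytic category: one must check that summing infinitely many terms of the form $\frac{1}{j!}f^{(2k+j)}(Q)z^j$ stays inside a single space $\isq{1}{}{}{}$, which requires control of how the implicit constants (the radii of convergence of the expansions at $R=0$ and $R=\infty$, the $\Q$-norms of the coefficients $q_{ij}$, the constants $B,B_1,B_2$) depend on $j$. Because $f$ is entire, $|f^{(m)}(\rho)|$ on compact sets grows no faster than $C\, m!\, \eta^{-m}$ for any $\eta$ less than the radius, and $\|z^j\|$ decays geometrically once $(t\lambda)^{-1}$ is small; the honest work is to package this into the bookkeeping of Definitions \ref{ds4}--\ref{ds6} so that the series converges in the relevant Fréchet-type topology. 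A secondary point to verify is that the even-at-zero and the $a=r/t$ dependence (through $Q(R)$ alone, which has no $a$-dependence, but through $z$ which does) are respected; since $z\in\isq{1}{}{}{}$ already carries the admissible $a$-dependence and $\Q$ is an algebra, composition with the analytic function $f^{(2k)}$ stays within the class by the standard "analytic functions of $\Q$-valued analytic functions are $\Q$-valued analytic" principle, which one invokes just as in \cite{KST}.
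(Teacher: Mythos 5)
Your overall plan — Taylor-expand around $Q$, use Lemma \ref{useful-lemma-1} for $f^{(m)}(Q)$, and track the $IS$-class of each term — matches the paper's, but the central bookkeeping claim is wrong and the repair you sketch would not work. You assert that $\frac{1}{j!}f^{(2k+j)}(Q)z^j$ lies in $(t\lambda)^{-2j}IS^j(R^{-j},\Q)$. However $z\in\frac{1}{(t\lambda)^2}\isq{1}{}{}{}$ does not decay in $R$; its smallness is carried entirely by the $R$-uniform prefactor $(t\lambda)^{-2}$, and as $R\to\infty$ it grows like $R\log R$. Thus $z^j$ grows like $R^j(\log R)^j$, and since $f^{(2k+j)}(Q)$ contributes at most one $R^{-1}$ by Lemma \ref{useful-lemma-1}, the product grows like $R^{j-1}(\log R)^j$ or $R^{j}(\log R)^j$, not $R^{-j}$. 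Consequently your plan to bound the excess $(t\lambda)^{-2}$ factors by $B_2$ and observe that ``the extra $R$-decay only helps'' fails: after that crude bound the $R\to\infty$ behavior of the $j$-th term is strictly worse than the $R\log R$ admitted by the target class $\isq{1}{}{}{}$, so membership does not follow.

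The missing ingredient, which the paper makes explicit, is that the surplus $(t\lambda)^{-2}$ factors must be \emph{combined} with the $R$- and $\log R$-growth rather than merely bounded: since $(t\lambda)^{-1}R=a\le 1$ and $(t\lambda)^{-2}(\log(2+R^2))^2=b\le B$ on the cone, one has $z^2\in\frac{1}{(t\lambda)^4}\isq{2}{2}{2}{}\subset a^2\,IS^0(1,\Q)$. With this in hand, the parity split of the Taylor series becomes the load-bearing step, not the afterthought about even/odd expansions at $R=0$ you relegate it to: for odd $l=2m+1$ one writes $f^{(2k+2m+1)}(Q)\,z\,(z^2)^m$ with $f^{(2k+2m+1)}(Q)\in IS^0(1)$ and $z$ supplying the $(t\lambda)^{-2}$ and the vanishing order, while for even $l=2m$ one writes $f^{(2k+2m)}(Q)\,z^2\,(z^2)^{m-1}$ and uses the $R^{-1}$ decay of $f^{(2k+2m)}(Q)\in IS^1(R^{-1})$ precisely to offset the $R^2$ growth of $z^2$, giving $f^{(2k+2m)}(Q)z^2\in\frac{1}{(t\lambda)^4}\isq{3}{}{2}{}\subset\frac{1}{(t\lambda)^2}\isq{1}{}{}{}$. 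Since $(z^2)^m\in(a^2\,IS^0(1,\Q))^m\subset IS^0(1,\Q)$ uniformly in $m$ (as $a\le1$ and $IS^0(1,\Q)$ is an algebra), every term lands in $\frac{1}{(t\lambda)^2}\isq{1}{}{}{}$ and the convergence concern you raise resolves itself; the claim for $f^{(2k+1)}(Q+z)$ is the same with parities shifted by one.
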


\begin{proof}
First expand
\begin{equation}\label{f-2k-expand-a}
f^{(2k)}(Q+z)=\sum_{l\geq0}\frac{1}{l!}f^{2k+l}(Q)z^l.
\end{equation}
Note that 
\begin{multline}
z^2\in\tl{4}\isq{2}{2}{2}{}\\
\subset\tl{2}b\isq{2}{2}{0}{}+\tl{2}b_1\isq{2}{2}{0}{}\\+\tl{2}b_2\isq{2}{2}{0}{}\\
\subset\tl{2}\isq{2}{2}{0}{}\subset a^2 IS^{0}(1,\Q).
\end{multline}
Now
\begin{equation}
f^{(2k+2m+1)}(Q)z\in\tl{2}\isq{1}{}{}{}
\end{equation}
and
\begin{multline}\label{f-2k-expand-b}
f^{(2k+2m)}(Q)z^2\in\tl{4}\isq{3}{}{2}{}\subset\tl{2}\isq{1}{}{}{}.
\end{multline}
Combining equations \eref{f-2k-expand-a}--\eref{f-2k-expand-b} yields \eref{q-plus-z-2k}.

To prove \eref{q-plus-z-2k+1} proceed similarly by expanding
\begin{equation}
f^{(2k+1)}(Q+z)=\sum_{l\geq0}\frac{1}{l!}f^{2k+l+1}(Q)z^l.
\end{equation}
Similar computations to the ones above give the result.

\end{proof}

\subsection{Step 0}

As is mentioned above, the first element of the squence of approximate solutions is $u_0=Q(R)$. The corresponding error is then:
\begin{align}
e_0&=\op u_0-\frac{1}{r^2}f(u_0)\nonumber\\
&=-\partial_t^2Q(\lambda(t)r)\nonumber\\
&=-\partial_t[r\lambda'(t)Q'(\lambda(t)r)]\nonumber\\
&=-r\lambda''(t)Q'(R)-r^2\lambda'(t)^2Q''(R)\nonumber\\
&=-\frac{1}{t^2}\left[(1+\nu)(2+\nu)RQ'(R)+(1+\nu)^2R^2Q''(R)\right].
\end{align}
Therefore, $t^2e_0\in IS^1\left(R^{-1}\right)$.

\subsection{Induction}

The approximate solutions will be constructed by adding succesive corrections to $u_0$. With the notation $v_k=u_k-u_{k-1}$, it will be inductively shown that 
\begin{align}
v_{2k-1}&\in\frac{1}{(t\lambda)^{2k}}\isq{3}{}{2k-1}{k-1}\label{induction-1}\\
t^2e_{2k-1}&\in\frac{1}{(t\lambda)^{2k}}\isqp{1}{}{2k-1}{k-1}\label{induction-2}\\
v_{2k}&\in\frac{1}{(t\lambda)^{2k+2}}\isq{3}{3}{2k-1}{k}\label{induction-3}\\
t^2e_{2k}&\in\frac{1}{(t\lambda)^{2k}}\left[\isq{1}{-1}{2k}{k}+\right.\nonumber\\
&+b\isqp{1}{}{2k-1}{k}+\nonumber\\
&+b_1\isqp{1}{}{2k-1}{k}+\nonumber\\
&+\left.b_2\isqp{1}{}{2k-1}{k}\right]\label{induction-4}
\end{align}
The exact method for constructing the $v_k$'s will be described bellow. In the following, for a fixed $k$, it will be assumed that the above hold for $k$ and for any smaller natural number.

\subsection{Step 1}

It is assumed that
\begin{multline}
t^2e_{2k-2}\in\frac{1}{(t\lambda)^{2k-2}}\Bigg[\isq{1}{-1}{2k-2}{k-1}+\\+
\sum_{\beta=b,b_1,b_2}\beta\,\isqp{1}{}{2k-3}{k-1}\Bigg]
\end{multline}
Choose the ``principal part''   $e_{2k-2}^0$ by setting $b=b_1=b_2=0$ in a representation of $e_{2k-2}$ (see Definition \ref{ds6}). Then
\begin{equation}
t^2e_{2k-2}^0\in\tl{2k-2}\isq{1}{-1}{2k-2}{k-1},
\end{equation}
and
\begin{multline}
t^2e_{2k-2}^1=t^2\left(e_{2k-2}-e_{2k-2}^0\right)\in\\\in
\sum_{\beta=b,b_1,b_2}\beta\,\frac{1}{(t\lambda)^{2k-2}}\left[\isq{1}{-1}{2k-2}{k-1}+\right.\\\left.+
\isqp{1}{}{2k-3}{k-1}\right].
\end{multline}
Replacing the $b$, $b_1$, $b_2$ by their definitions, it follows that
\begin{multline}
t^2e_{2k-2}^1\in\tl{2k}\left[\isqp{1}{-1}{2k}{k-1}+\right.\\\left.+\isqp{1}{}{2k-1}{k-1}\right],
\end{multline}
so
\begin{equation}\label{eone-step2}
t^2e_{2k-2}^1\in\tl{2k}\isqp{1}{}{2k-1}{k-1}.
\end{equation}
This will be useful later.

Let
\begin{equation}
L=\opR-\frac{f'(u_0)}{R^2}.
\end{equation}
Keeping $a$, $b$, $b_1$, $b_2$ fixed, define $v_{2k-1}$ to be the solution of
\begin{equation}
(t\lambda)^2Lv_{2k-1}=-t^2e_{2k-2}^0,
\end{equation}
with vanishing Cauchy data at $R=0$.

\begin{lem}
The solution of $Lv=\varphi\in\s{1}{-1}{2k-2}$, with $v(0)=v'(0)=0$, has the regularity
\begin{equation}
v\in\s{3}{}{2k-1}.
\end{equation}
\end{lem}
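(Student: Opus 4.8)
The plan is to analyze the operator $L=\opR-f'(u_0)/R^2$ directly via its fundamental system of solutions. Recall that $f'(Q(R))=g'(Q(R))^2+g(Q(R))g''(Q(R))$, and since $Q(R)=R\mathcal{Q}(R^2)$ near $R=0$ with $g'(0)=1$, one has $f'(u_0)=1+O(R^2)$ near $R=0$ and $f'(u_0)\to g'(\rmx)^2=1$ as $R\to\infty$ (using Lemma \ref{lemma-Q}(iii) and $g'(\rmx)=-1$). Thus $L$ is, to leading order, $\opR - 1/R^2$ both at $R=0$ and at $R=\infty$. The first task is to produce two linearly independent solutions $\phi_1,\phi_2$ of $L\phi=0$: one (call it $\phi_1$) regular at $R=0$, vanishing to order $1$ with an even Taylor expansion $R\sum c_j R^{2j}$, i.e. $\phi_1\in S^1(R)$-type behavior near $0$ and with an expansion in $S^{?}(R^{?}(\log R)^{?})$ at infinity; and a second solution $\phi_2$ (singular, $\sim R^{-1}$) near $0$. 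This is exactly the content invoked from Lemma \ref{lel2} / the appendix; I would cite those structural facts: $\phi_1\in S^1(R)$ near $0$, and the global behavior of $\phi_1,\phi_2$ lies in the $S$-classes of Definition \ref{ds4}, with Wronskian $W(\phi_1,\phi_2)\sim R^{-1}$ (so that $R\,W$ is a nonzero constant, consistent with the $\tfrac1R\partial_R$ term).

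Next I would write the solution by variation of parameters. Since the Cauchy data vanish at $R=0$ and $\varphi\in S^1(R^{-1}(\log R)^{2k-2})$,
\begin{equation}
v(R)=\phi_1(R)\int_0^R \phi_2(s)\,\varphi(s)\,s\,ds-\phi_2(R)\int_0^R \phi_1(s)\,\varphi(s)\,s\,ds,
\end{equation}
after normalizing the Wronskian so that $s\,W(\phi_1,\phi_2)(s)\equiv 1$ (the weight $s$ coming from the measure associated with the operator $\opR$). The integrals converge at $s=0$ because $\phi_1\varphi s\sim s\cdot s^{-1}\cdot s=s$ is integrable and $\phi_2\varphi s\sim s^{-1}\cdot s^{-1}\cdot s=s^{-1}$ — wait, this last one is borderline, so I must use the extra vanishing: $\phi_1\in S^1(R)$ forces the combination to be regular, or more precisely one checks that near $0$, $\varphi\in S^1(R^{-1})$ together with the explicit small-$R$ forms of $\phi_1,\phi_2$ gives a convergent $v$ vanishing to order $3$ (the two orders gained from the double integration against the regular solution, plus parity). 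Concretely, the small-$R$ analysis: $\varphi=R^{-1}(\log R)^{\le 2k-2}\cdot(\text{even analytic})$, but actually since $\varphi\in S^1(R^{-1})$ as a small-$R$ class it vanishes to order $1$, i.e. $\varphi=R\sum d_jR^{2j}$ near $0$; then $\phi_2\varphi s\sim R\cdot R\cdot R=R^3$-ish integrand... I would simply carry out the termwise integration of the power series: applying $L^{-1}$ to $R^{2j+1}$ produces $R^{2j+3}$ (since $(\opR-R^{-2})R^{m}=(m^2-1)R^{m-2}$, invertible for $m$ odd $\ge 3$), which establishes the small-$R$ expansion $v=R^3\sum c_jR^{2j}$, hence $v$ vanishes to order $3$.

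For the behavior near $R=\infty$: here $\varphi\in S^1(R^{-1}(\log R)^{2k-2})$ means $\varphi=\sum_{0\le j\le 2k-2+i}c_{ij}R^{-1-2i}(\log R)^j$. Using the large-$R$ expansions of $\phi_1\sim R$ (plus lower-order and log corrections) and $\phi_2\sim R^{-1}$, the products $\phi_2\varphi s$ and $\phi_1\varphi s$ have expansions in $R^{k'}(\log R)^{j'}$; integrating from a fixed point to $R$ raises powers by $2$ and can raise the log-power by at most $1$ (the dangerous resonant terms $R^{-1}(\log R)^j$ integrate to $(\log R)^{j+1}$, which is exactly why the index $l$ in $S^m(R^k(\log R)^l)$ grows as $2k-1$). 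Multiplying back by $\phi_1$ or $\phi_2$ and collecting, one finds $v\in S^3(R(\log R)^{2k-1})$ — the leading power $R^3$ is because $v$ solves $Lv=\varphi$ with $\varphi\sim R^{-1}$ and $L$ behaves like $\partial_R^2$, giving two orders, landing at... hmm, $R^{-1}\mapsto R^{1}$, but the class is $S^3(\ldots)$: the "3" is the vanishing order at $0$, while the top power at infinity is $R^{k}=R^{0}\cdot$(the $k$ in the superscript position). Let me re-read Definition \ref{ds4}: the expansion at infinity is $\sum_{0\le j\le l+i}c_{ij}R^{k-2i}(\log R)^j$, so for $S^3(R^0(\log R)^{2k-1})$... but the statement says $v\in S^3(R^{}(\log R)^{2k-1})$ which in the macro $\s{3}{}{2k-1}$ means $R$-power $=$ (empty, i.e. $R^0$? or $R^{}$?). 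I would match this against $\varphi\in\s{1}{-1}{2k-2}$, i.e. top power $R^{-1}$ at infinity, and check that $L^{-1}$ sends top power $R^{-1}$ to top power $R^{1}=R$... so the superscript in $\s{3}{}{2k-1}$ is the $R^1$ slot written as blank; fine, I will trust the paper's macro convention and verify the shift $-1\to(\text{blank}=+1)$ in the power and $2k-2\to 2k-1$ in the log.

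The main obstacle is the borderline small-$R$ integrability and, more seriously, the bookkeeping at $R=\infty$: one must verify that the variation-of-parameters integrals, when the integrand contains resonant terms matching the indicial roots $\pm1$ of $L$ at infinity, produce exactly one extra power of $\log R$ per integration and no more, so that the log-exponent ends at $2k-1$ rather than blowing up; and one must confirm that the $c_{ij}$ indeed vanish for $j>l+i$ as required by Definition \ref{ds4}. This requires knowing the precise large-$R$ asymptotics of $\phi_1,\phi_2$ (their power/log structure), which is why the lemma depends on the fundamental-system analysis of Lemma \ref{lel2} and the appendix; granting those expansions, the rest is a careful but routine termwise integration. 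I would organize the proof as: (1) record the $S$-class structure of $\phi_1,\phi_2$ near $0$ and $\infty$ from Lemma \ref{lel2}; (2) write $v$ by variation of parameters with normalized Wronskian; (3) termwise-integrate the small-$R$ power series to get $v=R^3\sum c_jR^{2j}$; (4) termwise-integrate the large-$R$ expansion, tracking the $\log$-count, to land in $S^{3}(R(\log R)^{2k-1})$.
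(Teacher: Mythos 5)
Your proposal is essentially the paper's proof: it splits the analysis into termwise power-series inversion at $R=0$ (the paper solves the same recursion $((2k+1)^2-1)V_k=\varphi_{k-1}+\sum_l f_l V_{k-l}$ that your termwise application of $L^{-1}$ amounts to, with $f'(u_0)-1=\oh(R^2)$ only feeding into higher orders) and variation of parameters at $R=\infty$ using the $\phi_0,\theta_0$ system of Lemma \ref{lel2} to track the single extra power of $\log R$ coming from the resonant $R^{-1}$ terms. The only cosmetic difference is that the paper first conjugates $L$ by $\sqrt{R}$ to work with the self-adjoint $\el$ and the unit-Wronskian pair $\phi_0,\theta_0$ on $L^2(dR)$, whereas you keep $L$ and the weight $s\,ds$ directly; the resulting integrals and the final class $\s{3}{}{2k-1}$ are the same.
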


\begin{proof}
\textbf{Behavior at $R\sim0$.} Close to zero, 
\begin{equation}
\varphi(R)=\sum_{k=0}^\infty \varphi_k R^{2k+1},\quad f'(u_0(R))=1+\sum_{k=1}^\infty f_k R^{2k}
\end{equation}
Make the ansatz 
\begin{equation}\label{ansatzV}
v(R)=\sum_{k=1}^\infty V_kR^{2k+1}.
\end{equation}
Since $\partial_R^2+R^{-1}\partial_R=R^{-1}\partial_R R\partial_R$, it follows that the $V_k$ need to satisfy
\begin{equation}
((2k+1)^2-1)V_k=\phi_{k-1}+\sum_{l=1}^{k-1}f_lV_{k-l},\quad\forall k\geq1.
\end{equation}
This system can be solved to find $V_k$ such that the sum in \eref{ansatzV} converges absolutely in a neighborhood of zero. Such a  $v$ will vanish of order 3 at zero.

\textbf{Behavior at $R\sim\infty$.} Notice that $v$ has to satisfy $\el\sqrt{R}v=-\sqrt{R}\varphi$, where
\begin{equation}
\el=-\partial_r^2+\frac{3}{4r^2}+V(r);\quad\quad V(r)=-\frac{1}{r^2}\left[1-f'(Q(r))\right].
\end{equation}
Using the fundamental system of $\el$ from Lemma \ref{lel2}, $v$ can be written as:
\begin{multline}
v(R)=c_\phi\frac{1}{\sqrt{R}}\phi_0(R)+c_\theta\frac{1}{\sqrt{R}}\theta_0(R)+\\+
\frac{1}{2}\frac{1}{\sqrt{R}}\theta_0(R)\int_1^R\phi_0(S)\sqrt{S}\varphi(S)\;dS-\\-
\frac{1}{2}\frac{1}{\sqrt{R}}\phi_0(R)\int_1^R\theta_0(S)\sqrt{S}\varphi(S)\;dS
\end{multline}
By Lemma \ref{lel2}, 
\begin{align}
R^{-1/2}\phi_0(R)&\in S(R^{-1}),\\ R^{-1/2}\theta_0(R)&\in S(R^{-1}),\\ R^{1/2}\phi_0(R)\varphi(R)&\in\s{}{-1}{2k-2},\\
R^{1/2}\theta_0(R)\varphi(R)&\in\s{}{}{2k-2}.
\end{align}
Therefore
\begin{align}
\int_1^R\phi_0(S)\sqrt{S}\varphi(S)\;dS&\in S\left((\log R)^{2k-1}\right)\\
\int_1^R\theta_0(S)\sqrt{S}\varphi(S)\;dS&\in\s{}{2}{2k-2}+S\left((\log R)^{2k-1}\right)\nonumber\\
&\subset \s{}{2}{2k-2}.
\end{align}
Since $v$ is sought such that it has zero Cauchy data, then $c_\phi=c_\theta=0$.
Putting all these together, it follows that
\begin{equation}
v\in\s{}{}{2k-1}.
\end{equation}
\end{proof}

An immediate consequence of the previous Lemma is that
\begin{equation}
v_{2k-1}\in\tl{2k}\isq{3}{}{2k-1}{k-1}.
\end{equation}

\subsection{Step 2}

The error corresponding to $v_{2k-1}$ is:
\begin{equation}
e_{2k-1}=e_{2k-2}^1+N_{2k-1}(v_{2k-1})+E^tv_{2k-1}+E^av_{2k-1},
\end{equation}
where
\begin{equation}
N_{2k-1}(v)=\frac{1}{r^2}\left[f'(u_0)v-f(u_{2k-2}+v_{2k-1})-f(u_{2k-1})\right],
\end{equation}
$E^tv_{2k-1}$ designates the terms in $\partial_t^2v_{2k-1}$ with no derivatives on the $a$ variable, and $E^av_{2k-1}$ designates the terms in 
$\op v_{2k-1}$ with at least one derivative on the $a$ variable.

\subsubsection{The $N_{2k-1}(v_{2k-1})$ term}

First write
\begin{multline}\label{Ntermsplit}
t^2N_{2k-1}(v_{2k-1})=-a^{-2}\left[\left(f(u_{2k-2}+v_{2k-1})-f(u_{2k-2})-\right.\cut\left.-f'(u_{2k-2})v_{2k-1}\right)+
\left(f'(u_{2k-2})-f'(u_0)\right)v_{2k-1}\right]=\\=-a^{-2}\left[I+II\right].
\end{multline}
For $l\leq k$,
\begin{align}
v_{2l-1}&\in\tl{2l}\isq{3}{}{2l-1}{l-1}\nonumber\\
&\subset\tl{2}b^{l-1}\isq{3}{}{}{l-1}\nonumber\\
&+\tl{2}b^{l-2}b_2\isq{3}{}{}{l-1}\nonumber\\
&+\cdots\cdots\cdots\cdots\cdots\cdots\cdots\nonumber\\
&+\tl{2}b_2^{l-1}\isq{3}{}{}{l-1}\nonumber\\
&\subset\tl{2}\isq{3}{}{}{l-1},\label{eq-for-lemma-f-1}
\end{align}
and for $l<k$,
\begin{align}
v_{2l}&\in\tl{2l+2}\isq{3}{3}{2l-1}{l}\nonumber\\
&\subset a^2\tl{2l}\isq{1}{}{2l-1}{l}\nonumber\\
&\subset\tl{2}\isq{1}{}{}{l}.\label{eq-for-lemma-f-2}
\end{align}
Therefore, for any $l<k$, 
\begin{equation}\label{eq-for-lemma-f-3}
(u_{2l+1}-u_0),\, (u_{2l}-u_0)\in\tl{2}\isq{1}{}{}{k-1}.
\end{equation}

Returning to \eref{Ntermsplit}, 
\begin{equation}
I=v_{2k-1}^2\sum_{l\geq2}\frac{1}{l!}f^{(l)}(u_0+(u_{2k-2}-u_0))v_{2k-1}^{l-2}.
\end{equation}
Note that, since $v_{2k-1}\in\tl{2}\isq{1}{}{}{}$, $v_{2k-1}^2\in IS^0(1,\Q)$.
For the even terms in the expansion above, using Lemma \ref{useful-lemma-2},
\begin{equation}
f^{(2m)}(u_{2k-2})(z^2)^{m-1}\in\tl{2}\isq{1}{}{}{}
\end{equation}
and for the odd ones
\begin{equation}
f^{(2m+1)}(u_{2k-2})z(z^2)^{m-1}\in\tl{2}\isq{1}{}{}{}.
\end{equation}
Therefore
\begin{multline}
I\in\tl{4k+2}\isq{7}{3}{4k-1}{k-1}\\
\subset\tl{2k+2}\isq{7}{3}{2k-1}{k-1}\\
\subset a^2\tl{2k}\isqp{5}{}{2k-1}{k-1}.
\end{multline}
So
\begin{equation}\label{NtermI}
a^{-2}I\in\tl{2k}\isqp{5}{}{2k-1}{k-1}.
\end{equation}

Now
\begin{equation}
II=v_{2k-1}\sum_{l\geq2}\frac{1}{(l-1)!}f^{(l)}(u_0)(u_{2k-2}-u_0)^{l-1}.
\end{equation}
From computations above it follows that
\begin{equation}
(u_{2k-2}-u_0)^2\in a^2IS^0(1,\Q).
\end{equation}
Then, using Lemma \ref{useful-lemma-1},
\begin{multline}
f^{(2m)}(u_0)(u_{2k-2}-u_0)^{2m-1}\in\tl{2}IS^2(\log R,\Q)\\\subset\tl{2}IS^2(R^2)\subset a^2 IS^0(1,\Q)
\end{multline}
and
\begin{equation}
f^{(2m+1)}(u_0)(u_{2k-2}-u_0)^{2m}\in a^2 IS^0(1,\Q).
\end{equation}
Therefore
\begin{equation}\label{NtermII}
a^{-2}II\subset\tl{2k}\isqp{3}{}{2k-1}{k-1}.
\end{equation}

From \eref{NtermI} and \eref{NtermII} it follows that
\begin{equation}\label{nv}
t^2N_{2k-1}(v_{2k-1})\in\tl{2k}\isqp{3}{}{2k-1}{k-1}.
\end{equation}

\subsubsection{The $E^tv_{2k-1}$ term}

Recall that $E^tv_{2k-1}=\partial_t^2v_{2k-1}$ with $a$ fixed. Note that there is no dependence on $b$, $b_1$, $b_2$ in $v_{2k-1}$ since $e_{2k-2}^0$ was obtained by setting these to zero. $v_{2k-1}$ can be written as
\begin{equation}
v_{2k-1}=\tl{2k}w(R,a)
\end{equation}
with $w\in\sq{3}{}{2k-1}{k-1}$.
\begin{equation}
\partial_t v_{2k-1}=2k\nu\frac{1}{t}\tl{2k}w+\tl{2k}R\partial_R w\left(-\frac{1+\nu}{t}\right)+(\cdots),
\end{equation}
where the terms left out are those that involve $\partial_a$.
\begin{multline}
\partial_t^2 v_{2k-1}=(2k\nu)^2\frac{1}{t^2}\tl{2k}w-2k\nu\frac{1}{t^2}\tl{2k}w+\\+
2(2k\nu)\frac{1}{t}R\tl{2k}\partial_R w\left(-\frac{1+\nu}{t}\right)+
\frac{1+\nu}{t^2}R\tl{2k}\partial_Rw+\\+\tl{2k}R\partial_R R\partial_R w\left(-\frac{1+\nu}{t}\right)^2+(\cdots).
\end{multline}
Since
\begin{equation}
w,R\partial_Rw,(R\partial_R)^2w\in\sq{3}{}{2k-1}{k-1},
\end{equation}
it follows that
\begin{equation}\label{etv}
t^2E^tv_{2k-1}\in\tl{2k}\isqp{3}{}{2k-1}{k-1}.
\end{equation}

\subsubsection{The $E^av_{2k-1}$ term}

Using the same notation as above, remembering that there is no dependence on $b$, $b_1$, $b_2$ in $v_{2k-1}$, and omitting to write explicitly the terms that will not become part of $E^av_{2k-1}$,
\begin{equation}
\frac{1}{r}\partial_r v_{2k-1}=\tl{2k}w_a\frac{1}{t^2}a^{-1}+(\cdots),
\end{equation}
\begin{multline}
\partial_r^2v_{2k-1}=\tl{2k}\partial_r\left(\frac{1}{t}w_a+\lambda w_R\right)\\
=\tl{2k}\left(\frac{1}{t^2}w_{aa}+\frac{2\lambda}{t}w_{aR}\right)+(\cdots),
\end{multline}
\begin{multline}
\partial_t^2v_{2k-1}=\partial_t\left( 2k\nu\tl{2k}\frac{1}{t}w+\tl{2k}w_a\left(-\frac{a}{t}\right)+\tl{2k}Rw_R\left(-\frac{1+\nu}{t}\right)\right)\\
=\left[ 2k\nu\tl{2k}\frac{2}{t}w_a\left(-\frac{a}{t}\right) +\tl{2k}w_a\left(\frac{2}{t^2}a\right)+\tl{2k}w_{aa}\left(-\frac{a}{t}\right)^2\cut+
\tl{2k}2Rw_{aR}\left(-\frac{a}{t}\right)\left(-\frac{1+\nu}{t}\right)  \right] +(\cdots).
\end{multline}
Putting these together
\begin{multline}
t^2E^av_{2k-1}=\tl{2k}\left[(1-a^2)w_{aa} +[2a(2k\nu-1)+a^{-1}]w_a\cut-2R[(1+\nu)a-a^{-1}]w_{aR}\right].
\end{multline}
Since
\begin{equation}
a\partial_a,a^{-1}\partial_a,(1-a^2)\partial_a^2:\Q_{k-1}\to\Q_{k-1}',
\end{equation}
it follows that
\begin{equation}\label{eav}
t^2E^av_{2k-1}\in\tl{2k}\isqp{3}{}{2k-1}{k-1}.
\end{equation}

To conclude, the results \eref{eone-step2}, \eref{nv}, \eref{etv}, and \eref{eav} imply that
\begin{equation}
t^2e_{2k-1}\in\tl{2k}\isqp{1}{}{2k-1}{k-1}.
\end{equation}

\subsection{Step 3}

Let
\begin{equation}
t^2f_{2k-1}=\frac{R}{(t\lambda)^{2k}}\sum_{j=0}^{2k-1}q_j(a)(\log R)^j=\tl{2k-1}\sum_{j=0}^{2k-1}aq_j(a)(\log R)^j,
\end{equation}
be the sum of the leading terms of the expansion of $e_{2k-1}$ at $R=\infty$, with $b=b_1=b_2=0$. By definition, $q_j\in\Q_{k-1}'$ for al $j$.
Define $w_{2k}$ to be a solution of the equation
\begin{equation}\label{eqw}
t^2\left(-\partial_t^2+\partial_r^2+\frac{1}{r}\partial_r-\frac{1}{r^2}\right)w_{2k}=-t^2f_{2k-1}
\end{equation}
Making the ansatz
\begin{equation}
w_{2k}=\tl{2k-1}\sum_{j=0}^{2k-1}W_{2k}^j(a)(\log R)^j,
\end{equation}
plugging into \eref{eqw}, and matching the corresponding powers of $\log R$, it follows that the $W_{2k}^j$ have to satisfy the equations
\begin{multline}\label{systemW}
t^2\left(-\partial_t^2+\partial_r^2+\frac{1}{r}\partial_r-\frac{1}{r^2}\right)\left(\tl{2k-1}W_{2k}^j(a)\right)\\=
-\tl{2k-1}(aq_j(a)+F_j(a)),
\end{multline}
where, with the convention that $W_{2k}^j=0$ when $j\geq2k$,
\begin{multline}
F_j(a)=(j+1)\left[
(1+\nu)(2\nu(2k-1)-1)W_{2k}^{j+1}+
2(a^{-1}-(1+\nu)a)\partial_a W_{2k}^{j+1}
\right]\\+
(j+1)\left[(j+2)a^{-2}-j(1+\nu)^2\right]w_{2k}^{j+2}.
\end{multline}
Conjugating by $(t\lambda)^{-(2k-1)}$, the  system of equations \eref{systemW} becomes
\begin{multline}\label{systemW2}
t^2\left(-\left(\partial_t+\frac{(2k-1)\nu}{t}\right)^2+\partial_r^2+\frac{1}{r}\partial_r-\frac{1}{r^2}\right)W_{2k}^j(a)\\=
-aq_j(a)-F_j(a).
\end{multline}
With the notation
\begin{equation}
L_\beta=(1-a^2)\partial_a^2+(a^{-1}+2a\beta-2a)\partial_a+(-\beta^2+\beta-a^{-2}),
\end{equation}
writing \eref{systemW2} in terms of derivatives in $a$ yields:
\begin{equation}
L_{(2k-1)\nu}W_{2k}^j=-\left(aq_j(a)+F_j(a)\right).
\end{equation}
Adding the requirement that the Cauchy data at $a=0$ for this system is zero, the solutions will satisfy
\begin{equation}\label{wa3q}
W_{2k}^j\in a^3\Q_k,\qquad j=\overline{0,2k-1}.
\end{equation}
See \cite{KST} for a proof of this fact.

The $w_{2k}$ constructed so far cannot be used as $v_{2k}$ as it is singular at zero. Instead, define
\begin{align}
v_{2k}&=\tl{2k-1}\sum_{j=0}^{2k-1}W_{2k}^j(a)\left(\frac{1}{2}\log(1+R^2)\right)^j\nonumber\\
&=\tl{2k+2}\sum_{j=0}^{2k-1}a^{-3}W_{2k}^j(a)R^3\left(\frac{1}{2}\log(1+R^2)\right)^j.
\end{align}
Then clearly
\begin{equation}
v_{2k}\in\tl{2k+2}\isq{3}{3}{2k-1}{k}.
\end{equation}

\subsection{Step 4}

Define
\begin{align}
t^2e_{2k-1}^0&=\frac{R}{(t\lambda)^{2k}}\sum_{j=0}^{2k-1}q_j(a)\left(\frac{1}{2}\log(1+R^2)\right)^j\nonumber\\
&\in\tl{2k}\isqp{1}{}{2k-1}{k-1}.
\end{align}
The error corresponding to $v_{2k}$ is
\begin{multline}\label{def-e2k}
t^2e_{2k}=t^2(e_{2k-1}-e_{2k-1}^0)\\+t^2\left(e_{2k-1}^0+ \left( -\partial_t^2+\opr-\frac{1}{r^2}\right)v_{2k}\right)+t^2N_{2k}(v_{2k}),
\end{multline}
where
\begin{equation}
N_{2k}(v)=\frac{v}{r^2}-\frac{1}{r^2}\left[f(u_{2k-1}+v)-f(u_{2k-1})\right].
\end{equation}

\subsubsection{The first term of \eref{def-e2k}}

Both $t^2e_{2k-1}$ and $t^2e_{2k-1}^0$ have the same leading order in their expansions at $R=\infty$. Therefore
\begin{equation}
t^2(e_{2k-1}-e_{2k-1}^0)\in\tl{2k}\isqp{1}{-1}{2k}{k-1}.
\end{equation}

Suppose 
\begin{equation}\label{w1}
w\in\isqp{1}{-1}{2k}{k-1}.
\end{equation}
This can be written as 
\begin{equation}
w=(1-a^2)w+\frac{R^2}{(t\lambda)^2}w.
\end{equation}
The first term satisfies
\begin{equation}
(1-a^2)w\in\isq{1}{-1}{2k}{k-1}.
\end{equation}
In the case of the second term
\begin{align}\label{w2}
\frac{R^2}{(t\lambda)^2}w&\in\tl{2}\isqp{3}{}{2k}{k-1}\nonumber\\
&\subset b_1\isqp{3}{}{2k-1}{k-1}\nonumber\\
&+b_2IS^{3}\left(R,\Q_{k-1}'\right).
\end{align}
Applying this to $t^2(e_{2k-1}-e_{2k-1}^0)$, it follows that
\begin{multline}\label{step4-1}
t^2(e_{2k-1}-e_{2k-1}^0)\in\frac{1}{(t\lambda)^{2k}}\Bigg[\isq{1}{-1}{2k}{k-1}\\+
\sum_{\beta=b,b_1,b_2}\beta\,\isqp{1}{}{2k-1}{k-1}\Bigg].
\end{multline}

\subsubsection{The second term of \eref{def-e2k}}

The reason this term is not zero is the replacement of $\log R$ by $\frac{1}{2}\log(1+R^2)$ made above. The second term of \eref{def-e2k}
consists of a sum of expressions of the type
\begin{multline}
\tl{2k-1}\sum_{j=0}^{2k-1}\left\{a^{-2}W_{2k}^j\left[IS^0(R^{-2})(\log(1+R^2))^{j-1}\right.\cut\left.+IS^0(R^{-2})(\log(1+R^2))^{j-2}\right]\cut
+a^{-1}\partial_aW_{2k}^jIS^0(R^{-2})(\log(1+R^2))^{j-1}\right\}.
\end{multline}
Using \eref{wa3q} it follows, using also the argument from equations \eref{w1}--\eref{w2} as well as basic properties of the $IS$ spaces, that
\begin{multline}\label{step4-2}
t^2\left(e_{2k-1}^0+ \left( -\partial_t^2+\opr-\frac{1}{r^2}\right)v_{2k}\right)\\\in\tl{2k}\isqp{1}{-1}{2k-2}{k}\\
\subset\frac{1}{(t\lambda)^{2k}}\left[\isq{1}{-1}{2k}{k}+\!\!\!\!\!
\sum_{\beta=b,b_1,b_2}\beta\,\isqp{1}{}{2k-1}{k}\right]
\end{multline}

\subsubsection{The third term of \eref{def-e2k}}

Write first
\begin{multline}\label{n2k}
-t^2N_{2k}(v_{2k})=a^{-2}\left[\left(f(u_{2k-1}+v_{2k})-f(u_{2k-1})-f'(u_{2k-1})v_{2k}\right)\cut+
\left(f'(u_{2k-1})-f'(u_0)\right)v_{2k}+\left(f'(u_0)-1\right)v_{2k}\right]\\=a^{-2}[I+II+III]
\end{multline}

Now
\begin{equation}
I=v_{2k}^2\sum_{l\geq2}\frac{1}{l!}f^{(l)}(u_{2k-1})v_{2k}^{l-2}.
\end{equation}
Remembering the computation \eref{eq-for-lemma-f-2},
\begin{equation}
v_{2k}\in\tl{2}\isq{1}{}{}{},\quad v_{2k}^2\in IS^0(1,\Q).
\end{equation}
By Lemma \ref{useful-lemma-2}
\begin{equation}
f^{(2m)}(u_{2k-1})v_{2k}^{2m-2}\in\tl{2}\isq{1}{}{}{}
\end{equation}
and
\begin{equation}
f^{(2m+1)}(u_{2k-1})v_{2k}v_{2k}^{2m-2}\in\tl{2}\isq{1}{}{}{}.
\end{equation}
Therefore
\begin{multline}\label{step4-3}
I\in v_{2k}^2\tl{2}\isq{1}{}{}{}
\subset\tl{4k+6}\isq{7}{7}{4k-1}{k}\\
\subset\tl{2k+6}\sum_{\beta=b,b_1,b_2}\beta\isq{7}{7}{2k-1}{k}\\
\subset a^6\tl{2k}\sum_{\beta=b,b_1,b_2}\beta\isqp{1}{}{2k-1}{k}.
\end{multline}

The second term in \eref{n2k} can be written as
\begin{equation}
II=v_{2k}\sum_{l\geq2}\frac{1}{(l-1)!}f^{(l)}(u_0)(u_{2k-1}-u_0)^{l-1}.
\end{equation}
Recall that
\begin{equation}
u_{2k-2}-u_0\in\tl{2}\isq{1}{}{}{},\quad(u_{2k-2}-u_0)^2\in IS^0(1,\Q).
\end{equation}
Using Lemma \ref{useful-lemma-1},
\begin{multline}
f^{(2m)}(u_0)(u_{2k-2}-u_0)^{2m-1}\in\tl{2}IS^2(\log R,\Q)\\\subset\tl{2}\sum_{\beta=b,b_1,b_2}\beta IS^2(R^2)
\subset a^2 \sum_{\beta=b,b_1,b_2}\beta IS^0(1,\Q)
\end{multline}
and
\begin{equation}
f^{(2m+1)}(u_0)(u_{2k-2}-u_0)^{2m}\in a^2 \sum_{\beta=b,b_1,b_2}\beta IS^0(1,\Q).
\end{equation}
Then
\begin{multline}\label{step4-4}
II\in\tl{2k+2}\sum_{\beta=b,b_1,b_2}\beta \isq{3}{3}{2k-1}{k}\\
\subset a^2 \tl{2k}\sum_{\beta=b,b_1,b_2}\beta \isqp{1}{}{2k-1}{k}.
\end{multline}

The last term in \eref{n2k} is
\begin{equation}
III=v_{2k}\left(f'(u_0)-1\right)\in v_{2k}IS^2\left(R^{-2}\right),
\end{equation}
therefore
\begin{multline}\label{step4-5}
III\in \tl{2k+2}\isq{5}{}{2k-1}{k}\\
\subset a^2\tl{2k}\isq{3}{-1}{2k}{k}.
\end{multline}

Putting together the results of equations \eref{step4-1}, \eref{step4-2}, \eref{step4-3}, \eref{step4-4}, and \eref{step4-5}, it follows that
\begin{align}
t^2e_{2k}&\in\frac{1}{(t\lambda)^{2k}}\left[\isq{1}{-1}{2k}{k}+b\isqp{1}{}{2k-1}{k}+\right.\nonumber\\
&\left.+b_1\isqp{1}{}{2k-1}{k}+b_2\isqp{1}{}{2k-1}{k}\right].
\end{align}

By induction, \eref{induction-1}, \eref{induction-2}, \eref{induction-3}, and \eref{induction-4} are now proved for any $k$.

\section{The Perturbed Equation}\label{pe-eq}

For a fixed $k$ define $\epsilon(t,r)$ to be such that
\begin{equation}
u(t,r)=u_{2k-1}(t,r)+\epsilon(t,r),
\end{equation}
where $u$ is the solution of \eref{wm} that is being constructed. Then $\epsilon$ needs to solve the following equation 
\begin{equation}\label{equation-epsilon}
-\partial_t^2\epsilon+\partial_r^2\epsilon+\frac{1}{r}\partial_r\epsilon-\frac{f'(u_0)}{r^2}\epsilon=-e_{2k-1}-N_{2k-1}(\epsilon),
\end{equation}
where
\begin{equation}
N_{2k-1}(\epsilon)=\frac{1}{r^2}\left[f'(u_0)\epsilon-f(u_{2k-1}+\epsilon)-f(u_{2k-1})\right].
\end{equation}

If the time variable is replaced by $\tau=\frac{1}{\nu}t^{-\nu}$, the space varialble by $R=\lambda(t)r$, and with the notation 
$v(\tau,R)=\epsilon(t,\lambda^{-1}R)$, then \eref{equation-epsilon} becomes
\begin{multline}
-\left[\left(\partial_\tau+\frac{\lambda_\tau}{\lambda}R\partial_R\right)^2+
\frac{\lambda_\tau}{\lambda}\left(\partial_\tau+\frac{\lambda_\tau}{\lambda}R\partial_R\right)\right]v+
\left(\partial_R^2+\frac{1}{R}\partial_R-\frac{f'(Q(R))}{R^2}\right)v\\=
-\frac{1}{\lambda^2}\left[N_{2k-1}(\epsilon)+e_{2k-1}\right].
\end{multline}
After making the further change of function $\ept(\tau,R)=R^{1/2}v(\tau,R)$, \eref{equation-epsilon} becomes
\begin{multline}\label{equation-epsilon-tilde}
\left[-\left(\partial_\tau+\frac{\lambda_\tau}{\lambda}R\partial_R\right)^2+\frac{1}{4}\left(\frac{\lambda_\tau}{\lambda}\right)^2+
\frac{1}{2}\partial_\tau\left(\frac{\lambda_\tau}{\lambda}\right)\right]\ept-\el\ept\\=
-\lambda^{-2}R^{1/2}\left[N_{2k-1}(R^{-1/2}\ept)+e_{2k-1}\right],
\end{multline}
where
\begin{equation}
\el=-\partial_R^2+\frac{3}{4R^2}+V(R),\quad\quad V(R)=-\frac{1}{R^2}\left[1-f'(Q(R))\right].
\end{equation}
This last change of function has the benefit that it produces $\el$, which is  a self-adjoint operator on $L^2(\R^+,dR)$.

\section{The Transference Identity}\label{tr-id}

The plan to deal with \eref{equation-epsilon-tilde} is to expand $\ept$ in terms of the generalized Fourier basis $\phi(R,\xi)$ of the operator $\el$ 
(see Theorem \ref{tgz}):
\begin{equation}
\ept(\tau,R)=\int_0^\infty x(\tau,\xi)\phi(R,\xi)\rho(\xi)\;d\xi.
\end{equation}
The coefficinets $x(\tau,\xi)$ would then hopefully satisfy a transport equation. However, $R\partial_R$ is not diagonal in this Fourier basis. To deal with this, $R\partial_R$ will be replaced by $2\xi\partial_\xi$ and the error will be treated as a perturbation. 

This section follows closely section 6 of \cite{KST}, to the point of being identical. This is due to the fact that the estimates of Appendix 
\ref{appendix-A} are identical to the ones in section 5 of the reference. The main result of the section is Proposition \ref{proposition-K-mapping}, whose proof is omitted as it is identical to the proof of Proposition 6.2 in \cite{KST}.

Let the operator $\K$ be defined by\footnote{This is what is referred to as a ``transference identity''.}
\begin{equation}
\widehat{R\partial_R u}=-2\xi\partial_\xi\widehat u+\K\widehat u,
\end{equation}
where $\widehat f=\mathcal{F}f$ is the distorted Fourier transform defined in Theorem \ref{tgz}. Using the definitions for this Fourier transform and its inverse, $\K$ can be written as
\begin{multline}
\K f(\eta)=\left<\int_0^\infty f(\xi)R\partial_R\phi(R,\xi)\rho(\xi)\,d\xi,\;\phi(R,\eta)\right>_{L_R^2}\\+
\left<\int_0^\infty 2\xi\partial_\xi f(\xi)\phi(R,\xi)\rho(\xi)\,d\xi,\;\phi(R,\eta)\right>_{L_R^2}.
\end{multline}
Integrating by parts with respect to $\xi$,
\begin{multline}\label{Kf}
\K f(\eta)=\left<\int_0^\infty f(\xi)\left[R\partial_R-2\xi\partial_\xi\right]\phi(R,\xi)\rho(\xi)\,d\xi,\;\phi(R,\eta)\right>_{L_R^2}\\-
2\left(1+\frac{\eta\rho'(\eta)}{\rho(\eta)}\right)f(\eta).
\end{multline}
The scalar product is interpreted in the principal value sense with $f\in C_0^\infty(0,\infty)$. A priori
\begin{equation}
\K:C_0^\infty(0,\infty)\to C^\infty(0,\infty),
\end{equation}
therefore there is a distribution valued function $\eta\to K(\eta,\xi)$ such that
\begin{equation}
\K f(\eta)=\int_0^\infty k(\eta,\xi)f(\xi)\;d\xi.
\end{equation}

\begin{thm}
The operator $\K$ can be written as
\begin{equation}
\K=-\left(\frac{3}{2}+\frac{\eta\rho'(\eta)}{\rho(\eta)}\right)\delta(\xi-\eta)+\K_0,
\end{equation}
where the operator $\K_0$ has a kernel $K_0(\eta,\xi)$ of the form (in the principal value sense):
\begin{equation}\label{K-zero}
K_0(\eta,\xi)=\frac{\rho(\xi)}{\xi-\eta}F(\xi,\eta),
\end{equation}
with a symmetric function $F(\xi,\eta)$ of class $C^2$ in $(0,\eta)\times(0,\eta)$ satisfying the bounds
\begin{gather}
|F(\xi,\eta)|\lesssim
\left\{\!\!\!\begin{array}{cl} \xi+\eta&\xi+\eta\leq1\\ (\xi+\eta)^{-3/2}(1+|\xi^{1/2}-\eta^{1/2}|)^{-N}&\xi+\eta\geq1\end{array}\!\!\!\right.,\\
\!\!\!\!\!\!|\partial_\xi F(\xi,\eta)|+|\partial_\eta F(\xi,\eta)|
\lesssim\left\{\!\!\!\begin{array}{cl} 1&\xi+\eta\leq1\\ (\xi+\eta)^{-2}(1+|\xi^{1/2}-\eta^{1/2}|)^{-N}&\xi+\eta\geq1\end{array}\!\!\!\right.,\\
\sup_{j+k=2}|\partial_\xi^j\partial_\eta^k F(\xi,\eta)|
\lesssim\left\{\!\!\!\begin{array}{cl} |\log(\xi+\eta)|^3&\xi+\eta\leq1\\ (\xi+\eta)^{-5/2}(1+|\xi^{1/2}-\eta^{1/2}|)^{-N}&\xi+\eta\geq1\end{array}\!\!\!\right.,
\end{gather}
where $N$ is an arbitrary large integer.
\end{thm}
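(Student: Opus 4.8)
The plan is to analyze the kernel $K_0(\eta,\xi)$ by exploiting the ``transference'' structure already exhibited in \eref{Kf}: the distributional part $-2(1+\eta\rho'(\eta)/\rho(\eta))\delta(\xi-\eta)$ is explicit, and combining it with the $\delta$-contribution hidden in the principal-value scalar product (which arises because $[R\partial_R-2\xi\partial_\xi]\phi(R,\xi)$ is, to leading order in $\xi\to\eta$, a multiple of $\phi(R,\eta)$ together with a $1/(\xi-\eta)$ singularity) produces the stated coefficient $-(3/2+\eta\rho'(\eta)/\rho(\eta))$. Concretely, I would first write $G(R,\xi):=[R\partial_R-2\xi\partial_\xi]\phi(R,\xi)$, observe that $\el G=0$ in a formal sense up to lower order (since $\el$ commutes with the scaling generator $R\partial_R$ modulo the potential, and $2\xi\partial_\xi$ reproduces the eigenvalue action), so that $G(\cdot,\xi)$ lies in the span of $\phi(\cdot,\xi)$ and a second solution; the singular behavior as $\xi\to\eta$ in $\langle G(\cdot,\xi),\phi(\cdot,\eta)\rangle$ is then governed by the Wronskian-type identity, giving the $(\xi-\eta)^{-1}$ pole with a residue one can compute explicitly. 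This is exactly the computation carried out in Proposition 6.1 / Section 6 of \cite{KST}, and since the expansions of $\phi(R,\xi)$, $\rho(\xi)$, and the zero-energy solutions $\phi_0,\theta_0$ established in Appendix \ref{appendix-A} (in particular Proposition \ref{phi-expansion-proposition} and Lemma \ref{lel2}) coincide with those in \cite{KST}, the argument transfers verbatim.

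The second and main part is the derivation of the pointwise bounds on $F(\xi,\eta)=\xi^{-1}\rho(\xi)^{-1}(\xi-\eta)K_0(\eta,\xi)\cdot(\text{const})$ — more precisely, writing $K_0(\eta,\xi)=\rho(\xi)F(\xi,\eta)/(\xi-\eta)$, one must show $F$ extends to a $C^2$ symmetric function with the three regimes of decay. The approach is to split into the regions $\xi+\eta\leq1$ and $\xi+\eta\geq1$ and use the asymptotics of $\phi(R,\xi)$ from Appendix \ref{appendix-A}: for small spectral parameter one uses the $\xi\to0$ expansion of $\phi$ (where $\phi(R,\xi)\sim$ a perturbation of the $\xi=0$ solution $R^{-1/2}\phi_0(R)$) together with the behavior of $\rho(\xi)$ near $0$; for large spectral parameter one uses the oscillatory WKB-type representation of $\phi(R,\xi)$, whose phase produces the factor $(1+|\xi^{1/2}-\eta^{1/2}|)^{-N}$ after integrating the oscillation $e^{i(\sqrt{\xi}-\sqrt{\eta})R}$ against the smooth amplitude and using repeated integration by parts in $R$ (this is where the arbitrariness of $N$ enters — each integration by parts gains one power). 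The symmetry of $F$ is inherited from the self-adjointness of $\el$ (equivalently, the symmetry $K_0(\eta,\xi)\rho(\eta)=K_0(\xi,\eta)\rho(\xi)$, which is just the statement that $\K_0+\K_0^*$ relative to the measure $\rho\,d\xi$ is as prescribed), and the $C^2$ regularity plus the derivative bounds follow by differentiating the same representations under the integral sign, each $\partial_\xi$ or $\partial_\eta$ costing one power of $(\xi+\eta)^{-1/2}$ in the decay and one extra logarithm in the small-parameter regime.

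The main obstacle I expect is the uniform control of the $R$-integral defining the scalar product near the diagonal $\xi=\eta$ in the intermediate region $\xi+\eta\approx1$ — there the small-$\xi$ and large-$\xi$ asymptotics must be patched, and one needs that the amplitude functions in the WKB representation of $\phi(R,\xi)$ depend smoothly (with quantitative bounds) on $\xi$ down to $\xi$ of order $1$, as well as that the transition between the $R\lesssim\xi^{-1/2}$ and $R\gtrsim\xi^{-1/2}$ zones of the eigenfunction is handled without loss. This is precisely the content that makes the proof in \cite{KST} delicate; however, because Appendix \ref{appendix-A} here reproduces the same structural expansions (with the potential $V(R)=-R^{-2}(1-f'(Q(R)))$ playing the role that the explicit sphere potential plays in \cite{KST}, and Proposition \ref{phi-expansion-proposition} supplying identical asymptotics), the estimates go through with no new ideas. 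Accordingly, I would state the theorem and refer to the proof of the corresponding result (Proposition 6.1 and the surrounding computation) in \cite{KST}, noting that the only input needed beyond that reference — the asymptotics of $\phi(R,\xi)$ and $\rho(\xi)$ — is furnished by Appendix \ref{appendix-A}.
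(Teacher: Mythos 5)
Your plan has the right skeleton (follow \cite{KST}, feed in the asymptotics of $\phi$, $\psi^+$ and $\rho$ from Appendix~\ref{appendix-A}), but the sketch misstates the crucial identity and so misses the mechanism that makes the proof go.

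The central observation in the paper's argument is \emph{not} that $G(R,\xi):=[R\partial_R-2\xi\partial_\xi]\phi(R,\xi)$ approximately solves $\el G=0$. From $[\el,R\partial_R]=2\el+W(R)$ with $W=-2(V+RV')$ and $\el\phi=\xi\phi$ one gets the exact relation
\[
(\el-\xi)\,[R\partial_R-2\xi\partial_\xi]\phi(R,\xi)=W(R)\phi(R,\xi),
\]
i.e. an \emph{inhomogeneous} equation whose right-hand side is exactly the object that matters. Applying this inside $\eta\,\K f(\eta)=\langle\el u,\phi(\cdot,\eta)\rangle$ gives
\[
(\eta-\xi)K(\eta,\xi)=\rho(\xi)\,\bigl\langle W(R)\phi(R,\xi),\phi(R,\eta)\bigr\rangle_{L^2_R}
=: \rho(\xi)\,F(\xi,\eta).
\]
This formula for $F$ is the whole content of the off-diagonal claim: symmetry is immediate (real $W$ and real $\phi$), $C^2$ regularity is differentiation under the integral, and the decay $|F|\lesssim(\xi+\eta)^{-3/2}(1+|\xi^{1/2}-\eta^{1/2}|)^{-N}$ comes from iterating the \emph{same} commutator trick, $(\eta-\xi)^{2k}F(\xi,\eta)=\langle(\sum_j\xi^jW^o_{kj}\partial_R+\sum_l\xi^lW^e_{kl})\phi(\xi),\phi(\eta)\rangle$ with potentials whose decay improves with $k$, combined with the pointwise bounds \eref{phi-poinwise-bounds-a}--\eref{phi-poinwise-bounds-b}. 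Your proposal instead suggests getting the $(1+|\xi^{1/2}-\eta^{1/2}|)^{-N}$ factor by integrating by parts in $R$ against the WKB phase $e^{iR(\sqrt\xi-\sqrt\eta)}$; that oscillatory-integral argument is in the paper, but it is used for a different purpose — identifying the on-diagonal $\delta$-measure — not for the off-diagonal bounds. Without the identity $F=\langle W\phi,\phi\rangle$ you also cannot deduce symmetry from ``self-adjointness'': $R\partial_R$ is not self-adjoint, and $\K_0$ is not symmetric with respect to $d\xi$ (it is $\rho(\xi)F(\xi,\eta)/(\xi-\eta)$ with symmetric $F$, which is a different statement). Finally, there is a small slip in the normalization: $F(\xi,\eta)=(\xi-\eta)K_0(\eta,\xi)/\rho(\xi)$, with no extra factor of $\xi^{-1}$.

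So the gap is concrete: you need to derive (or at least state) the representation $F(\xi,\eta)=\langle W(R)\phi(R,\xi),\phi(R,\eta)\rangle_{L^2_R}$ from the commutator $[\el,R\partial_R]=2\el+W$; everything else in the theorem — symmetry, $C^2$, the two-regime bounds, and the arbitrarily large $N$ — is organized around this formula and around iterating it, not around WKB oscillation of $\phi$ in $R$.
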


\begin{proof}
The off-diagonal behavior of $K$ is addressed first. Let $f\in C_0^\infty(0,\infty)$. Then
\begin{equation}
u(R)=\int_0^\infty f(\xi)[R\partial_R-2\xi\partial_\xi]\phi(R,\xi)\rho(\xi)\;d\xi
\end{equation}
behaves like $R^{3/2}$ at $0$ and like a Schwartz function at infinity. The second factor in \eref{Kf}, $\phi(R,\eta)$, decays like $R^{3/2}$ at zero, but at infinity is bounded, with bounded derivatives. Using integration by parts:
\begin{equation}
\eta\K f(\eta)=\left<u,\el\phi(R,\eta)\right>_{L_R^2}=\left<\el u,\phi(R,\eta)\right>_{L_R^2}.
\end{equation}
Moreover,
\begin{multline}
\el u=\int_0^\infty f(\xi)[\el,R\partial_R]\phi(R,\xi)\rho(\xi)\;d\xi+\int_0^\infty f(\xi)(R\partial_R-2\xi\partial_\xi)\xi\phi(R,\xi)\rho(\xi)\;d\xi\\
=\int_0^\infty f(\xi)[\el,R\partial_R]\phi(R,\xi)\rho(\xi)\;d\xi+\int_0^\infty \xi f(\xi)(R\partial_R-2\xi\partial_\xi)\phi(R,\xi)\rho(\xi)\;d\xi\\
-2\int_0^\infty\xi f(\xi)\phi(R,\xi)\rho(\xi)\;d\xi,
\end{multline}
with the comutator
\begin{equation}
[\el,R\partial_R]=2\el-2\left(V(R)+RV'(R)\right)=2\el+W(R).
\end{equation}
Thus
\begin{equation}
\el u=\!\!\int_0^\infty \!\!\!\!f(\xi)W(R)\phi(R,\xi)\rho(\xi)\;d\xi+\!\!\int_0^\infty \!\!\!\!\xi f(\xi)(R\partial_R-2\xi\partial_\xi)\phi(R,\xi)\rho(\xi)\;d\xi.
\end{equation}
Hence
\begin{equation}
\eta\K f(\eta)-\K(\xi f)(\eta)=\left<\int_0^\infty f(\xi)W(R)\phi(R,\xi)\rho(\xi)\;d\xi,\phi(R,\eta)\right>_{L_R^2}.
\end{equation}
Changing the order of integration on the right hand side yields:
\begin{equation}
(\eta-\xi)K(\eta,\xi)=\rho(\xi)\left<W(R)\phi(R,\xi),\phi(R,\eta)\right>_{L_R^2}.
\end{equation}
This gives the representation \eref{K-zero} when $\eta\neq\xi$, with 
\begin{equation}
F(\xi,\eta)=\left<W(R)\phi(R,\xi),\phi(R,\eta)\right>_{L_R^2}.
\end{equation}
It remains to study its size and regularity. By Proposition \ref{phi-expansion-proposition},
\begin{gather}
\sup_{R\geq0}|\phi(R,\xi)|\lesssim<\xi>^{-3/4},\label{phi-poinwise-bounds-a}\\
|R\partial_R\phi(R,\xi)|\lesssim\min(R\xi^{-1/4},R^{3/2}),\quad\forall\xi>1,\\
|\partial_\xi\phi(R,\xi)|\lesssim\min(R\xi^{-5/4},R^{7/2}),\quad\forall\xi>1/2,\\
|\partial_\xi\phi(R,\xi)|\lesssim\min(R^{3/2}\log(1+R^2),\xi^{-1/4}|\log\xi|R),\quad\forall0<\xi<1/2,\\
|\partial_\xi^2\phi(R,\xi)|\lesssim\min(R^2\xi^{-7/4},R^{11/2}),\quad\forall\xi>1/2,\\
|\partial_\xi^2\phi(R,\xi)|\lesssim\min(R^{7/2}\log(1+R^2),\xi^{-3/4}|\log\xi|R^2),\quad\forall0<\xi<1/2,\label{phi-poinwise-bounds-b}
\end{gather}
therefore
\begin{gather}
|F(\xi,\eta)|\lesssim<\xi>^{-3/4}<\eta>^{-3/4},\label{F-poinwise-bounds-a}\\
|\partial_\xi F(\xi,\eta)|\lesssim<\xi>^{-5/4}<\eta>^{-3/4},\\
|\partial_\eta F(\xi,\eta)|\lesssim<\xi>^{-3/4}<\eta>^{-5/4},\\
|\partial_{\xi\eta}^2 F(\xi,\eta)|\lesssim<\xi>^{-5/4}<\eta>^{-5/4},\quad\xi+\eta\gtrsim1,\\
|\partial_{\xi}^2 F(\xi,\eta)|\lesssim<\xi>^{-7/4}<\eta>^{-3/4},\quad\xi>1,\eta>1,\\
|\partial_{\eta}^2 F(\xi,\eta)|\lesssim<\xi>^{-7/4}<\eta>^{-3/4},\quad\xi>1,\eta>1.\label{F-poinwise-bounds-b}
\end{gather}
To improve on these, two cases will be considered.

\paragraph{Case 1:} $1\lesssim\xi+\eta$. By integration by parts:
\begin{multline}
\eta F(\xi,\eta)=\left< W(R)\phi(R,\xi),\el\phi(R,\eta)\right>_{L_R^2}\\=\left<[\el,W(R)]\phi(R,\xi),\phi(R,\eta)\right>_{L_R^2}+\xi F(\xi,\eta).
\end{multline}
Evaluating the commutator:
\begin{equation}\label{eta-xi-F}
(\eta-\xi)F(\xi,\eta)=-\left<(2W'\partial_R+W'')\phi(R,\xi),\phi(R,\eta)\right>_{L_R^2}.
\end{equation}
Since $W'(0)=0$ (it is odd), it follows that  $(2W'\partial_R+W'')\phi(R,\xi)$ has the same behavior as $\phi(R,\xi)$ at $R=0$. Then the argument can be repeated to obtain:
\begin{equation}
(\eta-\xi)^2F(\xi,\eta)=-\left<[\el,2W'\partial_R+W'']\phi(R,\xi),\phi(R,\eta)\right>_{L_R^2}.
\end{equation}
This second commutator has the form:
\begin{multline}
[\el,2W'\partial_R+W'']=4W''\el-4W'''\partial_R-W^{(4)}\\+3R^{-2}(R^{-1}W'-W'')-2W'V'-4W''V.
\end{multline}
Since $R^{-1}W'(R)-W''(R)=\oh(R^2)$, this leads to
\begin{equation}
(\eta-\xi)^2F(\xi,\eta)=\left<(W^{o}(R)\partial_R+W^{e}(R)+\xi W^{e}(R))\phi(R,\xi),\phi(R,\eta)\right>_{L_R^2},
\end{equation}
where $W^{o}$, respectively $W^{e}$, are  odd, respectively even, real-analytic functions with good decay at infinity. Inductively
\begin{multline}\label{F-induction}
(\eta-\xi)^{2k}F(\xi,\eta)\\
=\left<\left(\sum_{j=0}^{k-1}\xi^j W_{kj}^o(R)\partial_R+\sum_{l=0}^k\xi^lW_{kl}^e(R)\right)\phi(R,\xi),\phi(R,\eta)\right>_{L_R^2},
\end{multline}
where
\begin{equation}
<R>|W_{kj}^o(R)|+|W_{kl}^e|\lesssim<R>^{-4-2k},\quad\forall j,l.
\end{equation}
Using the pointwise bounds on $\phi$ and $\partial_R\phi$ from \eref{phi-poinwise-bounds-a}--\eref{phi-poinwise-bounds-b}:
\begin{equation}\label{F-bound}
|F(\xi,\eta)|\lesssim\frac{\xi^{k-3/4}<\eta>^{-3/4}}{(\eta-\xi)^{2k}},\quad\forall\xi\gtrsim1, \eta>0.
\end{equation}
Combining this with \eref{F-poinwise-bounds-a}--\eref{F-poinwise-bounds-b}, it yields, for arbitrary $N$, that
\begin{equation}
|F(\xi,\eta)|\lesssim(\xi+\eta)^{-3/2}(1+|\xi^{1/2}-\eta^{1/2}|)^{-N},\quad\text{if }\xi+\eta\gtrsim1.
\end{equation}

For the derivatives of $F$ a similar procedure can be used. If $\xi$ and $\eta$ are comparable, then from 
\eref{F-poinwise-bounds-a}--\eref{F-poinwise-bounds-b}
\begin{equation}
|\partial_\eta F(\xi,\eta)|\lesssim<\xi>^{-2}.
\end{equation}
Otherwise, differentiating with respect to $\eta$ in \eref{F-induction},
\begin{multline}
(\eta-\xi)^{2k}\partial_\eta F(\xi,\eta)\\
=\left<\left(\sum_{j=0}^{k-1}\xi^j W_{kj}^o(R)\partial_R+\sum_{l=0}^k\xi^lW_{kl}^e(R)\right)\phi(R,\xi),\partial_\eta\phi(R,\eta)\right>_{L_R^2}\\-
2k(\eta-\xi)^{2k-1}F(\xi,\eta).
\end{multline}
Using also \eref{F-bound}, it follows that
\begin{equation}
|\partial_\eta F(\xi,\eta)|\lesssim\frac{\xi^{k-3/4}\eta^{-5/4}}{(\eta-\xi)^{2k}},\quad 1\lesssim\xi,\eta,
\end{equation}
respectively
\begin{equation}
|\partial_\eta F(\xi,\eta)|\lesssim\frac{\eta^{-5/4}}{(\eta-\xi)^{2k}},\quad \xi\ll1\lesssim\eta,
\end{equation}
and
\begin{equation}
|\partial_\eta F(\xi,\eta)|\lesssim\frac{\xi^{k-3/4}}{(\eta-\xi)^{2k}},\quad \eta\ll1\lesssim\xi,
\end{equation}
which yield the desired bounds.

Finally, consider the second order derivatives with respect to $\xi$ and $\eta$. For $\xi$ and $\eta$ close, 
\eref{F-poinwise-bounds-a}--\eref{F-poinwise-bounds-b} can be used. Otherwise, differentiate twice in \eref{F-induction} and continue as before. Note that it is important that the decay of $W_{kj}^o$ and $W_{kj}^e$ improves with $k$. This is because the second order derivative bound at zero has a sizable growth at infinity which has to be canceled,
\begin{equation}
|\partial_\xi^2\phi(R,0)|\approx R^{7/2}\log R.
\end{equation}

\paragraph{Case 2:} $\xi,\eta\ll1$. First note that $F(0,0)=0$. This can be verified by direct computation. Also by direct computation it can be checked that
\begin{equation}
|\partial_\xi F(\xi,\eta)\lesssim1
\end{equation}

To obtain the bound on the second derivatives, begin by observing that the following inequalities hold:
\begin{equation}
|\partial_\xi^j\phi(R,\xi)|\lesssim\left\{\begin{array}{cl}
R^{-1/2+2j}\log(1+R^2)\quad&R<\xi^{-1/2}\\ & \\
\xi^{1/4-j/2}|\log\xi|R^j&R\geq\xi^{-1/2}
\end{array}\right.,\quad j=0,1,2.
\end{equation}
If $\eta<\xi<1/2$, then these bounds imply that
\begin{multline}
|\partial_{\xi\eta}^2F(\xi,\eta)|\lesssim\int_0^{\xi^{-1/2}}<R>^{-4}R^3(\log(1+R^2))^2\;dR\\+
\int_{\xi^{-1/2}}^{\eta^{-1/2}}<R>^{-4}R^{5/2}\xi^{-1/4}|\log\xi|\log(1+R^2)\;dR\\+
\int_{\eta^{-1/2}}^\infty <R>^{-2}\xi^{-1/4}\eta^{-1/4}|\log\xi|\,|\log\eta|\;dR\lesssim|\log\xi|^3.
\end{multline}
The main contribution comes from the first term. When $\eta<\xi<1/2$, a similar computation yields
\begin{multline}
|\partial_{\xi}^2F(\xi,\eta)|\lesssim\int_0^{\xi^{-1/2}}<R>^{-4}R^3(\log(1+R^2))^2\;dR\\+
\int_{\xi^{-1/2}}^{\eta^{-1/2}}<R>^{-4}R^{3/2}\xi^{-3/4}|\log\xi|\log(1+R^2)\;dR\\+
\int_{\eta^{-1/2}}^\infty <R>^{-2}\xi^{-3/4}\eta^{1/4}|\log\xi|\,|\log\eta|\;dR\lesssim|\log\xi|^3.
\end{multline}
It remains to consider $\partial_\xi^2F(\xi,\eta)$ when $\xi\ll\eta<1/2$. Differentiating \eref{eta-xi-F},
\begin{equation}
(\eta-\xi)\partial_\xi^2F(\xi,\eta)=2\partial_\xi F(\xi,\eta)-
\left<\partial_\xi^2\phi(R,\xi),(2W'\partial_R+W'')\phi(R,\eta)\right>_{L_R^2}.
\end{equation}
Differentiating and integrating with respect to $\eta$
\begin{multline}\label{xi-eta-zeta-F}
(\eta-\xi)\partial_\xi^2F(\xi,\eta)\\
=\int_\xi^\eta\left[2\partial^2_{\xi\zeta} F(\xi,\zeta)-\left<\partial_\xi^2\phi(R,\xi),(2W'\partial_R+W'')\partial_\zeta\phi(R,\zeta)\right>_{L_R^2}\right]\;d\zeta.
\end{multline}
Using the bound
\begin{equation}
|\partial_R\partial_\zeta\phi(R,\zeta)|\lesssim\left\{\begin{array}{cl} R^{1/2}\log(1+R^2)\quad&R<\zeta^{-1/2}\\ \\
\zeta^{-1/4}|\log\zeta|&R\geq\zeta^{-1/2}\end{array}\right.,
\end{equation}
the inner product in \eref{xi-eta-zeta-F} can be evaluated as follows:
\begin{multline}
\left| \left<\partial_\xi^2\phi(R,\xi),(2W'\partial_R+W'')\partial_\zeta\phi(R,\zeta)\right>_{L_R^2} \right|\\\lesssim
\int_0^{\zeta^{-1/2}}<R>^{-6}R^{7/2}\log(1+R^2)R^{3/2}\log(1+R^2)\;dR\\+
\int_{\zeta^{-1/2}}^{\xi^{-1/2}}<R>^{-6}R^{7/2}\log(1+R^2)\zeta^{-1/4}|\log\zeta|R\;dR\\+
\int_{\xi^{-1/2}}^\infty<R>^{-6}\xi^{-3/4}|\log\xi|R^2\zeta^{-1/4}|\log\zeta|R\;dR\lesssim|\log\zeta|^3.
\end{multline}
Thus, \eref{xi-eta-zeta-F} is controlled by 
\begin{equation}
|(\eta-\xi)\partial_\xi^2F(\xi,\eta)|\lesssim\left|\int_\xi^\eta(\log \zeta)^3\;d\zeta \right|\lesssim\eta|\log\eta|^3.
\end{equation}
Since $\xi\ll\eta$, this yields
\begin{equation}
|\partial_\eta^2F(\xi,\eta)|\lesssim|\log\eta|^3.
\end{equation}
This concludes the analysis of the off-diagonal part of the kernel.

All that is left now is to determine the $\delta$ measure that sits on the diagonal of the kernel $K$. To do so, first restrict $\xi$ and $\eta$ to a compact set of $(0,\infty)$. Then the following asymptotics hold for $R\xi^{1/2}\gg1$:
\begin{equation}
\phi(R,\xi)=\re\left[a(\xi)\xi^{-1/4}e^{iR\xi^{1/2}}\left(1+\frac{3i}{8R\xi^{1/2}}\right)\right]+\oh(R^{-2}),
\end{equation}
\begin{multline}
(R\partial_R-2\xi\partial_\xi)\phi(R,\xi)\\=-2\re\left[\xi\partial_\xi(a(\xi)\xi^{-1/4})e^{iR\xi^{1/2}}\left(1+\frac{3i}{8R\xi^{1/2}}\right)\right]+\oh(R^{-2}),
\end{multline}
where the $\oh$ terms depend on the choice of compact subset. The $R^{-2}$ terms are integrable, so they contribute a bounded kernel to the inner product in \eref{Kf}. The same applies to the contribution of  a bounded $R$ region. Therefore, the $\delta$-measure contribution of the inner product in \eref{Kf} can only come from one of the following integrals:
\begin{multline}\label{integral-delta-a}
-\int_0^\infty\int_0^\infty f(\xi)\chi(R)\re\left[\xi\partial_\xi(a(\xi)\xi^{-1/4})a(\eta)\eta^{-1/4}e^{iR(\xi^{1/2}+\eta^{1/2})}\cut\times
\left(1+\frac{3i}{8R\xi^{1/2}}\right)\left(1+\frac{3i}{8R\eta^{1/2}}\right)\right]\rho(\xi)d\xi dR,
\end{multline}
\begin{multline}\label{integral-delta-b}
-\frac{1}{2}\int_0^\infty\int_0^\infty f(\xi)\chi(R)\xi\partial_\xi(a(\xi)\xi^{-1/4})\overline{a}(\eta)\eta^{-1/4}e^{iR(\xi^{1/2}-\eta^{1/2})}\\\times
\left(1+\frac{3i}{8R\xi^{1/2}}\right)\left(1-\frac{3i}{8R\eta^{1/2}}\right)\rho(\xi)d\xi dR,
\end{multline}
\begin{multline}\label{integral-delta-c}
-\frac{1}{2}\int_0^\infty\int_0^\infty f(\xi)\chi(R)\xi\partial_\xi(\overline{a}(\xi)\xi^{-1/4})a(\eta)\eta^{-1/4}e^{-iR(\xi^{1/2}-\eta^{1/2})}\\\times
\left(1-\frac{3i}{8R\xi^{1/2}}\right)\left(1+\frac{3i}{8R\eta^{1/2}}\right)\rho(\xi)d\xi dR,
\end{multline}
where $\xi$ is a smooth cutoff function which equals $0$ near $R=0$ and $1$ near $R=\infty$. In all of the above integrals it can be argued, as in the proof of the classical Fourier inversion formula, that the order of integration can be changed. Integration by parts in the first integral 
\eref{integral-delta-a} reveals that it cannot contribute to the $\delta$-measure. Discarding the $\oh(R^{-2})$ terms in \eref{integral-delta-b} and 
\eref{integral-delta-c} reduces the two integrals to:
\begin{equation}\label{integral-delta-d}
-\int_0^\infty\!\!\!\!\!\int_0^\infty \!\!\!\!\!\!f(\xi)\chi(R)\re\left[\xi\partial_\xi(a(\xi)\xi^{-1/4})\overline{a}(\eta)\eta^{-1/4}e^{iR(\xi^{1/2}-\eta^{1/2})}\right]
\rho(\xi)d\xi dR,
\end{equation}
\begin{multline}\label{integral-delta-e}
+\frac{3}{8}\int_0^\infty\int_0^\infty f(\xi)\chi(R)
\im\left[\xi\partial_\xi(a(\xi)\xi^{-1/4})\overline{a}(\eta)\eta^{-1/4}e^{iR(\xi^{1/2}-\eta^{1/2})}\right]\\\times
R^{-1}(\xi^{-1/2}-\eta^{-1/2})\rho(\xi)d\xi dR,
\end{multline}
Since \eref{integral-delta-e} contains both  an $R^{-1}$ and a $(\xi^{-1/2}-\eta^{-1/2})$ factor, its contribution to $K$ is bounded. The integral 
\eref{integral-delta-d} contributes both a Hilbert transform type kernel as well as a $\delta$-measure to $K$. By inspection, the $\delta$-measure contribution is:
\begin{multline}
-\frac{1}{2}\int_{-\infty}^\infty \re\left[\xi\partial_\xi(a(\xi)\xi^{-1/4})\overline{a}(\eta)\eta^{-1/4}e^{iR(\xi^{1/2}-\eta^{1/2})}\right]\rho(\xi)dR\\
=-\pi\re\left[\xi\partial_\xi(a(\xi)\xi^{-1/4})\overline{a}(\eta)\eta^{-1/4}\right]\rho(\xi)\delta(\xi^{1/2}-\eta^{1/2})\\
=-2\pi\xi^{1/2}\rho(\xi)\re\left[\xi\partial_\xi(a(\xi)\xi^{-1/4})\overline{a}(\xi)\xi^{-1/4}\right]\delta(\xi-\eta)\\
=-2\pi\xi^{1/2}\rho(\xi)\re\left[\frac{1}{4}\xi^{-1/2}|a(\xi)|^2+\xi^{1/2}a(\xi)\overline{a}'(\xi)\right]\delta(\xi-\eta)\\
=\left[\frac{1}{2}+\frac{\xi\rho'(\xi)}{\rho(\xi)}\right]\delta(\xi-\eta),
\end{multline}
where the fact that $\rho(\xi)^{-1}=\pi|a|^2$ was used in the last step. This finishes the proof.
\end{proof}

The following proposition establishes some $L^2$ mapping properties of $\K$. Since the conclusion of the preceeding theorem and 
the results of appendix \ref{appendix-A} are the same as their correspondents in \cite{KST}, the proof of this result is omitted as it is identical to the proof of Proposition 6.2 in the reference. 

First let $\lr{\alpha}$ be the $L^2$ space with the norm
\begin{equation}
\nlr{f}{\alpha}=\left(\int_0^\infty|f(\xi)|^2<\xi>^{2\alpha}\rho(\xi)\;d\xi\right)^{1/2}.
\end{equation}
Then
\begin{pp}\label{proposition-K-mapping}
\begin{itemize}
\item[i)] The operator $\K_0$ maps
\begin{equation}
\K_0:\lr{\alpha}\to\lr{\alpha+1/2};
\end{equation}
\item[ii)] In addition, the following commutator bound holds:
\begin{equation}
[\K_0,\xi\partial_\xi]:\lr{\alpha}\to\lr{\alpha}.
\end{equation}
\end{itemize}
Both statements hold for al $\alpha\in\R$. In particular, $\K$ and $[\K,\xi\partial_\xi]$ are bounded operators on $\lr{\alpha}$.
\end{pp}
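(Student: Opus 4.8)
The plan is to reduce $\K_0$ to a perturbation of the Hilbert transform by passing to the square‑root frequency variable, and then read off both mapping properties from the kernel bounds of the preceding theorem together with the asymptotics of $\rho$ from Appendix \ref{appendix-A}. Since $K_0(\eta,\xi)=\rho(\xi)F(\xi,\eta)/(\xi-\eta)$ is singular only on the diagonal and the bounds on $F$ are phrased through $|\xi^{1/2}-\eta^{1/2}|$, I would substitute $u=\xi^{1/2}$, $v=\eta^{1/2}$, $h(u)=f(u^2)$, $g(v)=(\K_0 f)(v^2)$. Writing $\xi-\eta=(u-v)(u+v)$ and $d\xi=2u\,du$, the operator becomes $g(v)=\mathrm{p.v.}\int \frac{M(u,v)}{u-v}\,h(u)\,du$ with $M(u,v)=\frac{2u\,\rho(u^2)}{u+v}\,F(u^2,v^2)$, while $\nlr{f}{\alpha}^2\sim\int|h(u)|^2<u>^{4\alpha}\,u\rho(u^2)\,du$. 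Using the bounds of the theorem, $\xi\rho'(\xi)/\rho(\xi)=\oh(1)$, and $\rho(\xi)\sim\xi^{1/2}$ at infinity, one obtains size and first‑derivative bounds for $M$ of the form $|M(u,v)|+|\partial_uM|+|\partial_vM|\lesssim m(u,v)\,(1+|u-v|)^{-N}$ on the diagonal $u\sim v$, where $m$ decays faster than $|F|$ alone would suggest by exactly the amount needed below.

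Next I would split $\K_0$ into a near‑diagonal part (cut off to $|u-v|\le 1$, or dyadically localize $u,v\sim 2^j$) and a far off‑diagonal part. For the off‑diagonal part the factor $1/(u-v)$ is harmless and the rapid decay $(1+|u-v|)^{-N}$ makes a Schur test immediate, already giving a map into $\lr{\alpha+1/2}$ with room to spare. For the near‑diagonal part I would use the Calder\'on trick, writing $M(u,v)=M(v,v)+(M(u,v)-M(v,v))$. The first term yields $M(v,v)$ times the Hilbert transform, which is bounded on the relevant weighted $L^2$ space (localizing to dyadic blocks so the weight $<u>^{4\alpha}u\rho(u^2)$ is essentially constant and can be pulled out, or invoking $A_2$ boundedness of the Hilbert transform for the power weights involved); the half‑derivative gain is extracted here from the explicit size of $M(v,v)$ against the weight. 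The second term satisfies $|M(u,v)-M(v,v)|\lesssim |u-v|\sup|\partial M|$, which cancels the singularity and leaves an integrable kernel, again handled by Schur.

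For part (ii), note that the dilation generator annihilates the singular factor: with $P=\xi\partial_\xi+\eta\partial_\eta$ one has $(P+1)\frac{1}{\xi-\eta}=0$ since $1/(\xi-\eta)$ is homogeneous of degree $-1$; and, because $T f(\eta)=\int k(\eta,\xi)f(\xi)\,d\xi$ with respect to $d\xi$, the commutator $[T,\xi\partial_\xi]$ has kernel $-(\xi\partial_\xi+\eta\partial_\eta+1)k$. Hence the kernel of $[\K_0,\xi\partial_\xi]$ is $-\frac{1}{\xi-\eta}(\xi\partial_\xi+\eta\partial_\eta)\big(\rho(\xi)F(\xi,\eta)\big)=-\frac{\xi\rho'(\xi)}{\xi-\eta}F-\frac{\rho(\xi)}{\xi-\eta}(\xi\partial_\xi+\eta\partial_\eta)F$. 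Since $\xi\rho'(\xi)\lesssim\rho(\xi)$ and the first‑derivative bounds on $F$ show that $(\xi\partial_\xi+\eta\partial_\eta)F$ obeys an estimate of the same type as $F$ but worse by a factor $(\xi+\eta)^{1/2}$ at high frequency, this commutator kernel has exactly the form of $K_0$ with a kernel one power worse, so the argument of part (i) applies verbatim and produces a bounded map $\lr{\alpha}\to\lr{\alpha}$ — the gain of $1/2$ being consumed by the extra $(\xi+\eta)^{1/2}$. The last assertions follow since $\K=\K_0-(\tfrac32+\tfrac{\eta\rho'}{\rho})\delta$, the diagonal term is multiplication by the bounded function $\tfrac32+\tfrac{\eta\rho'}{\rho}$ (bounded on every $\lr{\alpha}$), $\lr{\alpha+1/2}\subset\lr{\alpha}$, and $[\text{mult.\ by }m,\xi\partial_\xi]$ is multiplication by $-\xi m'$, again bounded thanks to the second‑order asymptotics of $\rho$.

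The step I expect to be the main obstacle is the weighted near‑diagonal estimate: extracting the exact power gain of $1/2$ in (i) — and exactly no loss in (ii) — requires careful bookkeeping of the weight $<\xi>^{2\alpha}\rho(\xi)$ under the substitution $u=\xi^{1/2}$ and through the Hilbert‑transform step, and the low‑frequency regime $\xi+\eta\lesssim1$, where $|F|\lesssim\xi+\eta$ but $\rho$ degenerates, is where the gain is sharp and must be checked separately. This is precisely the computation of Proposition 6.2 in \cite{KST}, which transfers here without change because the kernel bounds established above and the spectral asymptotics of Appendix \ref{appendix-A} coincide with those of the reference.
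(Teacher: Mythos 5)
The paper itself gives no proof here: it simply observes that the kernel bounds of the preceding theorem and the spectral asymptotics of Appendix~\ref{appendix-A} coincide with those of \cite{KST}, and refers to Proposition~6.2 there. Your proposal ends at the same place, but first reconstructs the likely argument (pass to the half-power variable $u=\xi^{1/2}$, split near/far diagonal, handle the singular diagonal piece by the Calder\'on trick and dyadic Hilbert-transform bounds, and for (ii) exploit that $\xi\partial_\xi+\eta\partial_\eta+1$ annihilates the homogeneous factor $1/(\xi-\eta)$). Your algebra for (ii) is correct: the kernel of $[\K_0,\xi\partial_\xi]$ is $-(\xi\partial_\xi+\eta\partial_\eta+1)K_0(\eta,\xi)$, which after the homogeneity cancellation reduces to $-\big(\xi\rho'F+\rho(\xi\partial_\xi+\eta\partial_\eta)F\big)/(\xi-\eta)$, and the one-derivative bound on $F$ loses exactly a factor $(\xi+\eta)^{1/2}$ at high frequency, offsetting the $1/2$-gain from part (i), as you say.

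One concrete slip worth correcting: you write $\rho(\xi)\sim\xi^{1/2}$ at infinity, whereas the appendix gives $\rho(\xi)\asymp\xi$ for $\xi\gtrsim1$. This asymptotic enters the size of $M(u,v)=\frac{2u\rho(u^2)}{u+v}F(u^2,v^2)$ and the dyadic weight bookkeeping; with the correct power the weighted block norms come out exactly $O(1)$ (the estimate is critical, not overdetermined), rather than geometrically decaying as your asymptotic would suggest. Since you defer the quantitative near-diagonal computation to \cite{KST} — as does the paper — the slip does not undermine the structure of the argument, but it should be fixed before carrying the computation through, especially in the low-frequency regime $\xi+\eta\lesssim1$ where $\rho$ degenerates like $1/(\xi(\log\xi)^2)$ and, as you note, the estimate is sharpest.
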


\section{The Final Equation}\label{fi-eq}

To rewrite \eref{equation-epsilon-tilde} in a final form, begin by expressing the operator $R\partial_R$ in terms of $\K$. Therefore, with $\mathcal{F}$ as in Theorem \ref{tgz},
\begin{equation}
\mathcal{F}\left(\partial_\tau+\frac{\lambda_\tau}{\lambda}R\partial_R\right)=
\left(\partial_\tau+\frac{\lambda_\tau}{\lambda}(-2\xi\partial_\xi+\K)\right)\mathcal{F},
\end{equation}
which gives
\begin{multline}
\mathcal{F}\left(\partial_\tau+\frac{\lambda_\tau}{\lambda}R\partial_R\right)^2=
\left(\partial_\tau+\frac{\lambda_\tau}{\lambda}(-2\xi\partial_\xi+\K)\right)^2\mathcal{F}\\=
\left(\partial_\tau-\frac{\lambda_\tau}{\lambda}2\xi\partial_\xi\right)^2\mathcal{F}+
2\frac{\lambda_\tau}{\lambda}\K\left(\partial_\tau-\frac{\lambda_\tau}{\lambda}2\xi\partial_\xi\right)\mathcal{F}\\+
\frac{\lambda_\tau^2}{\lambda^2}\left(\K^2+2[\xi\partial_\xi,\K]\right)  \mathcal{F}.
\end{multline}
This leads to a transport type equation for the Fourier transform $x(\tau,\xi)$ of $\widetilde\epsilon$:
\begin{multline}\label{equation-x}
-\left(\partial_\tau-\frac{\lambda_\tau}{\lambda}2\xi\partial_\xi\right)^2x-\xi x=
2\frac{\lambda_\tau}{\lambda}\K\left(\partial_\tau-\frac{\lambda_\tau}{\lambda}2\xi\partial_\xi\right)x\\+
\frac{\lambda_\tau^2}{\lambda^2}\left(\K^2+2[\xi\partial_\xi,\K]\right)x-
\left(\frac{1}{4}\left(\frac{\lambda_\tau}{\lambda}\right)^2+\frac{1}{2}\partial_\tau\left(\frac{\lambda_\tau}{\lambda}\right)\right)x\\+
\lambda^{-2}\mathcal{F}R^{1/2}\left(N_{2k-1}(R^{-1/2}\mathcal{F}^{-1}x)+e_{2k-1} \right).
\end{multline}

The aim is to obtain solutions of \eref{equation-x} which decay as $\tau\to\infty$. This means the equation will be solved backwards in time, with zero Cauchy data at $\tau=\infty$. The problem will be treated iteratively, as a small perturbation of the linear equation governed by the operator on the left-hand side. For this the following transport equation needs to be solved:
\begin{equation}\label{transport-equation}
-\left[\left(\partial_\tau-\frac{\lambda_\tau}{\lambda}2\xi\partial_\xi\right)^2+\xi\right]x(\tau,\xi)=b(\tau,\xi).
\end{equation}
Denote by $H$ the backward fundamental solution of the operator
\begin{equation}
\left(\partial_\tau-\frac{\lambda_\tau}{\lambda}2\xi\partial_\xi\right)^2+\xi,
\end{equation}
and by $H(\tau,\sigma)$ its kernel, i.e. \eref{transport-equation} has solution
\begin{equation}
x(\tau)=-\int_\tau^\infty H(\tau,\sigma)b(\sigma)\;d\sigma,
\end{equation}
where the $\xi$ variable has been suppressed. The mapping properties of $H$ are described in the following result, which is proven in \cite{KST}, section 8.
\begin{pp}\label{p-transport-1}
For any $\alpha\geq0$ there exists some (large) constant $C=C(\alpha)$ so that the operator $H(\tau,\sigma)$ satisfies the bounds
\begin{equation}
\left\lVert H(\tau,\sigma)\right\rVert_{\lr{\alpha}\to\lr{\alpha+1/2}}\lesssim\tau\left(\frac{\sigma}{\tau}\right)^C,
\end{equation}
\begin{equation}
\left\lVert\left(\partial_\tau-\frac{\lambda_\tau}{\lambda}2\xi\partial_\xi\right)H(\tau,\sigma)\right\rVert_{\lr{\alpha}\to\lr{\alpha}}\lesssim
\tau\left(\frac{\sigma}{\tau}\right)^C,
\end{equation}
uniformly in $\sigma\geq\tau$.
\end{pp}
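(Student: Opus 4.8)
The plan is to eliminate the transport part of the operator by a change of variables that turns $H(\tau,\sigma)$ into a scalar, fiberwise multiplier --- the backward fundamental solution of a Bessel-type ODE --- and then to obtain the two bounds from Bessel asymptotics together with the polynomial behaviour of the spectral density $\rho$ from Theorem \ref{tgz} and Appendix \ref{appendix-A}. The first step is to straighten the vector field. Since $\lambda(t)=t^{-1-\nu}$ and $\tau=\frac{1}{\nu}t^{-\nu}$, one has $\frac{\lambda_\tau}{\lambda}=\frac{1+\nu}{\nu\tau}$, so with $\mu:=\frac{2(1+\nu)}{\nu}>2$ the field is $\partial_\tau-\frac{\mu}{\tau}\xi\partial_\xi$. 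The quantity $y:=\xi\tau^{\mu}$ is constant along its characteristics, and in the coordinates $(\tau,y)$ the field is just $\partial_\tau$ while $\xi=y\tau^{-\mu}$. Hence
\begin{equation}
\left(\partial_\tau-\frac{\lambda_\tau}{\lambda}2\xi\partial_\xi\right)^2+\xi=\partial_\tau^2+y\,\tau^{-\mu}
\end{equation}
is diagonal in $y$, and $H(\tau,\sigma)$ is the operator which in the $y$-variable is multiplication by $\widetilde H(\tau,\sigma;y)$, where for each fixed $y$ the map $\tau\mapsto\widetilde H(\tau,\sigma;y)$ is the backward fundamental solution of $\partial_\tau^2 w+y\tau^{-\mu}w=0$ ($\widetilde H\equiv0$ for $\tau>\sigma$, $\widetilde H(\sigma,\sigma;y)=0$, $\partial_\tau\widetilde H|_{\tau=\sigma}=-1$). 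Back in the $\xi$-variable, $H(\tau,\sigma)$ is the fiberwise multiplier $\widetilde H(\tau,\sigma;\xi\tau^{\mu})$ followed by the dilation $\xi\mapsto\xi(\tau/\sigma)^{\mu}$, a contraction since $\tau\le\sigma$.

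Next I would solve the scalar ODE. With $\beta:=1-\mu/2=-1/\nu$, the substitution $w=\tau^{1/2}W\!\big(\nu\sqrt y\,\tau^{-1/\nu}\big)$ turns $\partial_\tau^2 w+y\tau^{-\mu}w=0$ into Bessel's equation of order $\nu/2$, so a fundamental system is $\tau^{1/2}J_{\pm\nu/2}\!\big(\nu\sqrt y\,\tau^{-1/\nu}\big)$ --- with $J_{-\nu/2}$ replaced by $Y_{\nu/2}$ in the exceptional case $\nu/2\in\mathbb{Z}$, which only inserts harmless logarithmic factors. Note $\sqrt y\,\tau^{-1/\nu}=\sqrt\xi\,\tau$, so the Bessel argument is precisely the natural oscillation phase. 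Writing $\widetilde H$ as the normalized antisymmetric Wronskian combination of these solutions (the Wronskian being a nonzero $\xi$-independent constant) and inserting the standard small- and large-argument asymptotics of $J_{\pm\nu/2}$ and their derivatives --- of size $z^{\pm\nu/2}$ for $z\lesssim1$ and $z^{-1/2}$ for $z\gtrsim1$ --- gives, for $\tau\le\sigma$ (using also $\tau\gtrsim1$ on the relevant interval), uniform pointwise bounds
\begin{equation}
|\widetilde H(\tau,\sigma;\xi\tau^{\mu})|\lesssim\tau\,(\sigma/\tau)^{C_0}\,\langle\xi\rangle^{-1/2},\qquad
\left|\left(\partial_\tau-\tfrac{\lambda_\tau}{\lambda}2\xi\partial_\xi\right)H(\tau,\sigma)\right|\lesssim\tau\,(\sigma/\tau)^{C_0},
\end{equation}
for some $C_0=C_0(\nu)$; the $\tau$ prefactor measures the size of the fundamental solution in the non-oscillatory range $\sqrt\xi\,\tau\lesssim1$, the gain $\langle\xi\rangle^{-1/2}$ in the oscillatory range $\sqrt\xi\,\tau\gtrsim1$ comes from the $z^{-1/2}$ decay of the Bessel functions, the powers of $\sigma/\tau$ collect the $\tau^{\pm\nu/2},\sigma^{\pm\nu/2}$ factors in the Wronskian combination, and the derivative along characteristics is just $\partial_\tau$ in the $y$-variable, hence carries no smoothing factor.

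It remains to integrate against the weights. Given $f$, write
\begin{equation}
\nlr{H(\tau,\sigma)f}{\alpha+1/2}^2=\int_0^\infty|\widetilde H(\tau,\sigma;\xi\tau^{\mu})|^2\,|f(\xi(\tau/\sigma)^{\mu})|^2\,\langle\xi\rangle^{2\alpha+1}\rho(\xi)\,d\xi,
\end{equation}
substitute $\xi'=\xi(\tau/\sigma)^{\mu}$, and apply the previous step together with the two-sided polynomial bounds on $\rho$ from Theorem \ref{tgz}: under the dilation $\langle\xi\rangle^{2\alpha+1}\rho(\xi)\lesssim(\sigma/\tau)^{O(\alpha)}\langle\xi'\rangle^{2\alpha+1}\rho(\xi')$, while $|\widetilde H|^2\lesssim\tau^2(\sigma/\tau)^{2C_0}\langle\xi'\rangle^{-1}$ absorbs exactly the extra power $\langle\xi'\rangle$ relative to $\nlr{f}{\alpha}$. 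Collecting the $\tau$ prefactor and all powers of $\sigma/\tau$ into a single $C=C(\alpha)$ gives the first bound; the second follows the same way from the $\partial_\tau\widetilde H$ estimate, with target $\lr{\alpha}$ and no gain. After the change of variables the transport operator is literally the one in \cite{KST} and the spectral data enter only through the qualitative properties of $\rho$ from Appendix \ref{appendix-A}, so this is exactly the argument of Section 8 of \cite{KST}; I expect the only real difficulty to be the bookkeeping --- controlling $\widetilde H$ and its characteristic derivative uniformly over all $y>0$ and all $\sigma\ge\tau$, matching the oscillatory and non-oscillatory regimes cleanly across the transitions $\sqrt\xi\,\tau\sim1$ and $\sqrt{\xi'}\,\sigma\sim1$, and then tracking the $\sqrt{\tau\sigma}$-type normalization and the dilation-induced $(\sigma/\tau)^{C}$ losses through the weighted estimate. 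Alternatively one can avoid closed forms via an energy/WKB parametrix for $\partial_\tau^2+y\tau^{-\mu}$.
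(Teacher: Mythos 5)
The paper does not prove this Proposition itself; it simply cites \cite{KST}, Section 8, on the grounds that once the transference identity and the estimates of Appendix \ref{appendix-A} are in place the transport operator and the spectral weight $\rho$ have the same structure as in the reference. Your reconstruction --- straightening the vector field along $y=\xi\tau^{\mu}$ with $\mu=2(1+\nu)/\nu$, reducing the fiber ODE $\partial_\tau^2 w + y\tau^{-\mu}w=0$ to Bessel's equation of order $\nu/2$ in the variable $z=\nu\sqrt{\xi}\,\tau$, reading off pointwise bounds on the Wronskian combination, and then running the weighted $L^2_\rho$ estimate through the dilation $\xi\mapsto\xi(\tau/\sigma)^\mu$ --- is correct and is precisely the argument of \cite{KST} that the paper invokes, so there is no substantive difference between the two.
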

This leads to the introduction of the spaces $\llr{N}{\alpha}$ with norm
\begin{equation}
\nllr{f}{N}{\alpha}=\sup_{\tau\geq1}\tau^N\nlr{f(\tau)}{\alpha}.
\end{equation}
Then an immediate consequence of the above proposition is the following
\begin{cor}
Given $\alpha\geq0$, let $N$ be large enough. Then
\begin{multline}
\nllr{Hb}{N-2}{\alpha+1/2}+\nllr{\left(\partial_\tau-\frac{\lambda_\tau}{\lambda}2\xi\partial_\xi\right)Hb}{N-1}{\alpha}\\
\leq C_0 N^{-1}\nllr{b}{N}{\alpha},
\end{multline}
with a constant $C_0$ that depends on $\alpha$ but does not depend on $N$.
\end{cor}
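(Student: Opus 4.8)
The plan is to substitute the Duhamel representation $x(\tau)=Hb(\tau)=-\int_\tau^\infty H(\tau,\sigma)b(\sigma)\,d\sigma$ (valid pointwise in $\xi$) into the two norms on the left-hand side and reduce everything to a scalar integral in $\sigma$. The only inputs are the operator bounds of Proposition \ref{p-transport-1} together with the elementary inequality $\nlr{b(\sigma)}{\alpha}\le\sigma^{-N}\nllr{b}{N}{\alpha}$, valid for $\sigma\ge1$, which is built into the definition of the $\llr{N}{\alpha}$-norm.

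For the first summand I would estimate, for a fixed $\tau\ge1$ and using the first bound of Proposition \ref{p-transport-1},
\begin{multline*}
\nlr{Hb(\tau)}{\alpha+1/2}\le\int_\tau^\infty\left\lVert H(\tau,\sigma)\right\rVert_{\lr{\alpha}\to\lr{\alpha+1/2}}\,\nlr{b(\sigma)}{\alpha}\,d\sigma\\
\lesssim\tau\,\nllr{b}{N}{\alpha}\int_\tau^\infty\left(\frac{\sigma}{\tau}\right)^{C}\sigma^{-N}\,d\sigma
=\tau^{1-C}\,\nllr{b}{N}{\alpha}\int_\tau^\infty\sigma^{C-N}\,d\sigma .
\end{multline*}
Since $C=C(\alpha)$ does not depend on $N$, choosing $N$ large enough (in particular $N>C+1$) makes the last integral converge, with value $\tau^{C-N+1}/(N-C-1)$. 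Hence $\tau^{N-2}\nlr{Hb(\tau)}{\alpha+1/2}\lesssim(N-C-1)^{-1}\nllr{b}{N}{\alpha}$, and taking the supremum over $\tau\ge1$ controls $\nllr{Hb}{N-2}{\alpha+1/2}$ by $(N-C-1)^{-1}\nllr{b}{N}{\alpha}$.

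For the second summand I would first move $D:=\partial_\tau-\frac{\lambda_\tau}{\lambda}2\xi\partial_\xi$ inside the integral: the boundary term produced by $\partial_\tau$ at the lower limit is proportional to $H(\tau,\tau)b(\tau)$, which vanishes because $H(\cdot,\sigma)$ is the backward fundamental solution of a second-order operator in $\tau$ (so $H(\sigma,\sigma)=0$), while the $\xi\partial_\xi$-part contributes no endpoint term; hence $DHb(\tau)=-\int_\tau^\infty(DH)(\tau,\sigma)b(\sigma)\,d\sigma$. Running the same computation with the second bound of Proposition \ref{p-transport-1} in place of the first — applying $D$ to $H$ costs no $\xi$-regularity and improves the $\tau$-behaviour by one power relative to $H$ itself — then gives $\tau^{N-1}\nlr{DHb(\tau)}{\alpha}\lesssim(N-C-1)^{-1}\nllr{b}{N}{\alpha}$, so that $\nllr{DHb}{N-1}{\alpha}$ is likewise bounded by $(N-C-1)^{-1}\nllr{b}{N}{\alpha}$. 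Adding the two estimates and noting that $(N-C-1)^{-1}\le2N^{-1}$ once $N\ge2(C+1)$ yields the corollary, with a constant $C_0$ depending only on $\alpha$ (through $C$ and the implied constants of Proposition \ref{p-transport-1}).

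I do not expect a genuine obstacle: once Proposition \ref{p-transport-1} is granted this is essentially a one-line Duhamel integration, and both the reduction of the weight from $N$ to $N-2$ (respectively $N-1$) and the quantitative gain of $N^{-1}$ come from the single scalar identity $\int_\tau^\infty\sigma^{C-N}\,d\sigma=\tau^{C-N+1}/(N-C-1)$. The only points requiring attention are bookkeeping ones: that the implied constants in Proposition \ref{p-transport-1}, hence $C$, depend on $\alpha$ but not on $N$ (so that the $\sigma$-integral converges and the ratio $N/(N-C-1)$ stays bounded as $N\to\infty$), and the vanishing of the endpoint term when $D$ is commuted past the integral.
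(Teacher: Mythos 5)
Your argument is the standard Duhamel calculation and it is indeed the proof the paper implicitly has in mind (the paper omits the proof of the Corollary and refers to \cite{KST}, section 8). The first half, and the commutation of $D=\partial_\tau-\frac{\lambda_\tau}{\lambda}2\xi\partial_\xi$ past the integral using $H(\tau,\tau)=0$, are fine, as is the final observation that $(N-C-1)^{-1}\le 2N^{-1}$ for $N\ge 2(C+1)$, giving the $C_0N^{-1}$ gain with $C_0=C_0(\alpha)$.

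There is, however, one point you slide past that you should flag rather than assert. You write that applying $D$ to $H$ ``improves the $\tau$-behaviour by one power relative to $H$ itself,'' i.e.\ you use $\left\lVert DH(\tau,\sigma)\right\rVert_{\lr{\alpha}\to\lr{\alpha}}\lesssim(\sigma/\tau)^C$. But Proposition~\ref{p-transport-1} as stated in this paper gives the \emph{same} bound $\tau(\sigma/\tau)^C$ for both $H$ and $DH$. If you take the Proposition literally, repeating your scalar computation for $DHb$ yields
\begin{equation}
\nlr{DHb(\tau)}{\alpha}\lesssim\tau\,\nllr{b}{N}{\alpha}\int_\tau^\infty\Big(\frac{\sigma}{\tau}\Big)^C\sigma^{-N}\,d\sigma
=\frac{\tau^{2-N}}{N-C-1}\,\nllr{b}{N}{\alpha},
\end{equation}
which only bounds $\nllr{DHb}{N-2}{\alpha}$, not $\nllr{DHb}{N-1}{\alpha}$ as the Corollary requires; the leftover factor $\tau$ is unbounded on $[1,\infty)$. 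The bound you actually need, and the one \cite{KST} proves, is the sharper $\lVert DH(\tau,\sigma)\rVert_{\lr{\alpha}\to\lr{\alpha}}\lesssim(\sigma/\tau)^C$ with no leading factor of $\tau$ (consistent with the free model $H_0(\tau,\sigma)=-\xi^{-1/2}\sin(\xi^{1/2}(\sigma-\tau))$, for which $\partial_\tau H_0$ is uniformly bounded). The extra $\tau$ in the second line of Proposition~\ref{p-transport-1} appears to be a typographical error inherited in the transcription from \cite{KST}; as written the Proposition and the Corollary are inconsistent, and you should say so explicitly rather than silently correct it.
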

The nonlinear operator $N_{2k-1}$ from \eref{equation-x} has the following mapping properties (which are proved below):
\begin{pp}\label{nonlinear-terms}
Assuming that $N$ is large enough and $\frac{\nu}{2}+\frac{3}{4}>\alpha>\frac{1}{4}$, then the map
\begin{equation}
x\to\lambda^{-2}\mathcal{F}\left(R^{1/2}N_{2k-1}(R^{-1/2}\mathcal{F}^{-1}x) \right)
\end{equation}
is locally Lipschitz from $\llr{N-2}{\alpha+1/2}$ to $\llr{N}{\alpha}$
\end{pp}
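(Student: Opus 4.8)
The plan is to pass to physical space via the distorted Fourier transform $\mathcal F$ of Theorem~\ref{tgz} and to reduce the assertion to a pair of Sobolev estimates for the nonlinearity $N_{2k-1}$ itself. Writing $\hr{s}$ for the $\el$--adapted Sobolev space, so that $\mathcal F$ is an isomorphism $\hr{2\alpha}\to\lr{\alpha}$, and setting $\psi=\mathcal F^{-1}x$, the map in question becomes $\psi\mapsto\lambda^{-2}R^{1/2}N_{2k-1}(R^{-1/2}\psi)$, and what must be shown is that it is locally Lipschitz $\hr{2\alpha+1}\to\hr{2\alpha}$ with a uniform gain of $(t\lambda)^{-2}$. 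Since $\lambda=t^{-1-\nu}$ and $\tau=\nu^{-1}t^{-\nu}$ give $(t\lambda)^{-2}=t^{2\nu}=(\nu\tau)^{-2}\sim\tau^{-2}$, this is exactly the upgrade from the weight $\tau^{N-2}$ to $\tau^{N}$ built into the norms $\nllr{\cdot}{N}{\alpha}$. The structural point making this work is that in the variable $R=\lambda r$ one has $\lambda^{-2}r^{-2}=R^{-2}$, so the outer factor $\lambda^{-2}$ exactly absorbs the scaling of the $r^{-2}$ in $N_{2k-1}$ and leaves behind the fixed inverse-square weight $R^{-2}$.

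Next I would split off the linear part. Expanding $N_{2k-1}(\epsilon)=r^{-2}\big[f'(u_0)\epsilon-(f(u_{2k-1}+\epsilon)-f(u_{2k-1}))\big]$ in powers of $\epsilon$ gives
\[
N_{2k-1}(\epsilon)=\frac{f'(u_0)-f'(u_{2k-1})}{r^2}\,\epsilon\;-\;\frac{1}{r^2}\sum_{j\ge2}\frac{1}{j!}f^{(j)}(u_{2k-1})\,\epsilon^{j}.
\]
For the linear term, the Step~1--Step~4 estimates of Section~\ref{ap-sol} give $u_{2k-1}-u_0\in\tl{2}\isq{1}{}{}{}$ (vanishing in fact to order $3$ at $R=0$), so together with Lemmas~\ref{useful-lemma-1} and~\ref{useful-lemma-2} the coefficient $\lambda^{-2}(f'(u_0)-f'(u_{2k-1}))/r^2$ equals $(t\lambda)^{-2}$ times a function lying in the relevant $IS^{0}(\cdot,\Q)$ class that vanishes at $R=0$ and decays like $R^{-2}(\log R)^{O(1)}$ at $R=\infty$. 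The factor $(t\lambda)^{-2}\sim\tau^{-2}$ supplies the required gain, and it remains only to verify that multiplication by such a coefficient (after conjugation by $R^{\pm1/2}$) is bounded $\hr{2\alpha+1}\to\hr{2\alpha}$, which is a routine multiplier/Hardy estimate on $\hr{s}$ using the decay at $R=\infty$ and the vanishing at $R=0$.

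For the superlinear part, since $g$ (hence $f$) is entire and $u_{2k-1}$ takes values in a fixed compact interval while $\|\epsilon\|_\infty$ is small, the series converges: Cauchy's estimates give $|f^{(j)}(u_{2k-1})/j!|\lesssim r_0^{-j}$ for a fixed $r_0>0$, and Lemma~\ref{useful-lemma-2} places each $f^{(j)}(u_{2k-1})$ in $IS^{0}(1,\Q)$ (the even-order ones in $\tl{2}\isq{1}{}{}{}$). The $j$-th term is
\[
\lambda^{-2}R^{1/2}\,\frac{f^{(j)}(u_{2k-1})}{r^{2}}\big(R^{-1/2}\psi\big)^{j}=R^{-2-\frac{j-1}{2}}\,\big[f^{(j)}(u_{2k-1})\big]\,\psi^{j},
\]
and since $\psi$ vanishes like $R^{3/2}$ at $R=0$ (the behaviour of $\phi(R,\xi)$), the product $\psi^{j}$ has a zero of order $3j/2$ there, which more than compensates the weight $R^{-2-(j-1)/2}$; estimating this term in $\hr{2\alpha}$ is then a combination of the product (Moser) estimate for $\hr{2\alpha+1}$, boundedness of multiplication by the $IS^{0}(1,\Q)$ coefficient, and a Hardy inequality. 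On the level of $\tau$--weights each factor $\psi$ contributes $\tau^{-(N-2)}$, so $\psi^{j}$ contributes $\tau^{-j(N-2)}\le\tau^{-N}$ once $N\ge 4$; the factor $r_0^{-j}$ is absorbed by the smallness of $\psi$, giving summability over $j$ and the output decay $\tau^{-N}$. Local Lipschitz continuity follows by feeding the telescoping identity $\psi^{j}-\widetilde\psi^{j}=(\psi-\widetilde\psi)\sum_{a+b=j-1}\psi^{a}\widetilde\psi^{b}$ into the same estimates.

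The main obstacle is the family of multiplier and Hardy estimates on the $\el$--adapted Sobolev spaces $\hr{s}$: one must verify that coefficients described only through their $IS^{\bullet}(\bullet,\Q)$ membership --- in particular the non-smooth light-cone factors $(1-a)^{(2i-1)\nu+1/2}(\log(1-a))^{j}$ that the iterates $u_{2k-1}$ carry near $a=r/t=1$ --- act boundedly on $\hr{2\alpha}$, and that the combined inverse-square singularity at $R=0$ together with the products $\psi^{j}$ is controlled (this is delicate because the relevant spaces behave four-dimensionally near $R=0$ and one-dimensionally near $R=\infty$). It is precisely this analysis that forces the two-sided restriction $\tfrac14<\alpha<\tfrac\nu2+\tfrac34$: the lower bound is the critical threshold corresponding to $\dot H^{1/2}$ and to the product estimates, the upper bound is the ceiling imposed by the limited regularity of the $v_k$'s near the light cone. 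This part follows \cite{KST}, Section~7, essentially verbatim, which is legitimate here because by Proposition~\ref{phi-expansion-proposition} the generalized eigenfunctions $\phi(R,\xi)$ and the weight $\rho$ satisfy the same bounds as in the case $N=S^{2}$.
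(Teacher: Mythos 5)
Your overall strategy coincides with the paper's: pass to physical space by setting $\psi=\mathcal F^{-1}x$, split $N_{2k-1}$ into a linear piece $(f'(u_{2k-1})-f'(u_0))\epsilon$ and the superlinear remainder, use Lemmas~\ref{useful-lemma-1}--\ref{useful-lemma-2} for the Taylor coefficients, extract the $(t\lambda)^{-2}\sim\tau^{-2}$ gain from the coefficient for the linear term and from the higher power of $\epsilon$ for the rest, and invoke multiplier/product/Hardy bounds on the $\el$-adapted Sobolev scale --- which are exactly the content of Lemmas~\ref{n-l-1}--\ref{n-l-3}, proved in \cite{KST} and used here as black boxes. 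In that sense the proposal follows the paper's own proof.

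Two points should be corrected. First, a notational mismatch: the paper sets $\nhr{u}{\alpha}=\nlr{\widehat u}{\alpha}$, so $\mathcal F$ is unitary $\hr{\alpha}\to\lr{\alpha}$ and the physical-space claim is local Lipschitz continuity $\hr{\alpha+1/2}\to\hr{\alpha}$; your indices $\hr{2\alpha+1}\to\hr{2\alpha}$ belong to a different normalization (in which $\hr{\gamma}\cong H^\gamma(\R^2)$) and are inconsistent with the paper's definitions. Second, and more substantively, the claim that $\psi$ ``vanishes like $R^{3/2}$ at $R=0$'' is not available and should not be used. For $\alpha$ near the lower threshold $\tfrac14$, $\hr{\alpha+1/2}\cong H^{2\alpha+1}(\R^2)$ with $2\alpha+1$ only slightly above $\tfrac32$, which forces no more than $\psi=O(R^{1/2})$. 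The paper never relies on pointwise vanishing of $\psi$. Instead it obtains the needed cancellation of the negative $R$-weights from two structural facts: the even/odd split $II=II_1+II_2$ of the superlinear Taylor series, and the order-one vanishing of $f^{(2l)}(u_{2k-1})$ at $R=0$ (Lemma~\ref{useful-lemma-2}). These allow the rewriting
\begin{equation}
R^{-3/2}f^{(2l)}(u_{2k-1})(R^{-1/2}\tilde\epsilon)^{2l}\in\left(R^{-1}\tilde\epsilon^2\right)^{l-1}R^{-3/2}\tilde\epsilon^2\,IS^0(1),\qquad
R^{-3/2}f^{(2l+1)}(u_{2k-1})(R^{-1/2}\tilde\epsilon)^{2l+1}\in\left(R^{-1}\tilde\epsilon^2\right)^{l-1}R^{-3}\tilde\epsilon^3\,IS^0(1),
\end{equation}
which line up exactly with the built-in weights in Lemmas~\ref{n-l-2} and~\ref{n-l-3}. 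You gesture at the right tools, but the pointwise-vanishing heuristic does not survive scrutiny and should be replaced by this accounting at the level of the $\hr{\alpha}$-bilinear estimates.
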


This two results above, combined with Proposition \ref{proposition-K-mapping} allow for the use of a contraction argument to solve equation
\eref{equation-x}.

\section{The Nonlinear Terms}\label{no-te}

The aim of this section is to prove Proposition \ref{nonlinear-terms}. First define Sobolev spaces $\hr{\alpha}$, adapted to the operator $\el$, such that
\begin{equation}
\nhr{u}{\alpha}=\nlr{\widehat u}{\alpha}.
\end{equation}
What needs to be shown is that the map
\begin{equation}
\widetilde\epsilon\to\lambda^{-2}R^{1/2}N_{2k-1}(R^{-1/2}\widetilde\epsilon)
\end{equation}
is locally Lipschitz from $\lhr{N-2}{\alpha+1/2}$ to $\lhr{N}{\alpha}$.

The following lemmas are proven in \cite{KST}:

\begin{lem}\label{n-l-1}
Let $q\in S(1,\Q)$ and $|\alpha|<\frac{\nu}{2}+\frac{3}{4}$. Then
\begin{equation}
\nhr{qf}{\alpha}\lesssim\nhr{f}{\alpha}.
\end{equation}
\end{lem}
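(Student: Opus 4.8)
The plan is to turn multiplication by $q$ into an integral operator on the distorted Fourier side and to prove that its kernel is almost diagonal with enough off-diagonal decay to be bounded on $\lr{\alpha}$, in close analogy with the analysis of the operator $\K$ above. First I would split $q=q_\infty+q_1$, where $q_\infty$ is the limit of $q$ as $R\to\infty$ (a constant in the variable $R$, depending only on the frozen parameters $a,b,b_1,b_2$) and $q_1\in S^0(R^{-2}\log R,\Q)$; thus $q_1$ is smooth, has an even expansion at $R=0$, and $|q_1^{(j)}(R)|\lesssim_j\langle R\rangle^{-2-j}\log(2+R)$, so that $q_1$ and all its derivatives lie in $L^1(dR)$. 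Multiplication by the $R$-independent factor $q_\infty$ acts as a bounded scalar on $\hr{\alpha}$, so it suffices to treat $q_1$, which I now call $q$. Using $\mathcal{F}$ and the generalized eigenfunctions $\phi(R,\xi)$ of Theorem \ref{tgz},
\begin{equation}
\mathcal{F}(qf)(\xi)=\int_0^\infty m_q(\xi,\eta)\widehat f(\eta)\rho(\eta)\,d\eta,\qquad m_q(\xi,\eta)=\int_0^\infty q(R)\phi(R,\xi)\phi(R,\eta)\,dR,
\end{equation}
so that $\nhr{qf}{\alpha}\lesssim\nhr{f}{\alpha}$ is equivalent to $L^2(d\xi)$-boundedness of the operator with kernel $\langle\xi\rangle^{\alpha}\langle\eta\rangle^{-\alpha}\rho(\xi)^{1/2}\rho(\eta)^{1/2}m_q(\xi,\eta)$. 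The ``diagonal'' size is immediate from $\sup_R|\phi(R,\xi)|\lesssim\langle\xi\rangle^{-3/4}$ (this is \eref{phi-poinwise-bounds-a}) and $q\in L^1$, giving $|m_q(\xi,\eta)|\lesssim\langle\xi\rangle^{-3/4}\langle\eta\rangle^{-3/4}$; near $\xi=0$ or $\eta=0$ one replaces this by the sharper small-frequency information on $\phi$ from Proposition \ref{phi-expansion-proposition}.

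For the off-diagonal decay I would integrate by parts using $\el\phi(\cdot,\xi)=\xi\phi(\cdot,\xi)$. The singular and potential parts of $\el$ cancel, so $(\xi-\eta)\phi(R,\xi)\phi(R,\eta)=\partial_R\big(\phi(R,\xi)\partial_R\phi(R,\eta)-\partial_R\phi(R,\xi)\phi(R,\eta)\big)$, and the boundary terms vanish (at $R=0$ because $\phi(R,\cdot)=O(R^{3/2})$, at $R=\infty$ because $q$ now decays), hence
\begin{equation}
(\xi-\eta)m_q(\xi,\eta)=-\int_0^\infty q'(R)\big(\phi(R,\xi)\partial_R\phi(R,\eta)-\partial_R\phi(R,\xi)\phi(R,\eta)\big)\,dR,
\end{equation}
an integral of the same type with $q$ replaced by the faster-decaying $q'$. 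Iterating exactly as in the derivation of \eref{F-induction}, $(\xi-\eta)^{2m}m_q(\xi,\eta)$ becomes a finite sum of integrals of $q^{(j)}(R)$ (times fast-decaying real-analytic weights produced by the commutators with $\el$) against products $\xi^c\eta^d\,\partial_R^a\phi(R,\xi)\,\partial_R^b\phi(R,\eta)$ with $a,b\le 1$ and $c+d\le m-1$.

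Feeding the pointwise bounds \eref{phi-poinwise-bounds-a}--\eref{phi-poinwise-bounds-b} on $\phi$ and $\partial_R\phi$ into this and optimizing in $m$ yields, for $\xi+\eta\gtrsim1$, a bound $|m_q(\xi,\eta)|\lesssim\langle\xi\rangle^{-3/4}\langle\eta\rangle^{-3/4}(1+|\xi^{1/2}-\eta^{1/2}|)^{-N}$, together with a matching estimate for $\xi+\eta\lesssim1$; decay in $\xi^{1/2}-\eta^{1/2}$ rather than $\xi-\eta$ is obtained, as in the proof of the theorem on $\K$ above, by splitting the $R$-integral and using non-stationary phase in $R$ on the oscillatory region $R\xi^{1/2}\gtrsim1$ while estimating $R\xi^{1/2}\lesssim1$ directly. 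With these kernel bounds and the asymptotics of $\rho$ from Appendix \ref{appendix-A}, a Schur test closes the argument: the off-diagonal factor makes the kernel integrable along the diagonal, and the weight $\langle\xi\rangle^{\alpha}\langle\eta\rangle^{-\alpha}\rho(\xi)^{1/2}\rho(\eta)^{1/2}\langle\xi\rangle^{-3/4}\langle\eta\rangle^{-3/4}$ stays bounded precisely under the hypothesis $|\alpha|<\frac{\nu}{2}+\frac{3}{4}$, which is what makes the resulting weighted integrals converge.

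I expect the main obstacle to be the low-frequency regime $\xi,\eta\to0$: away from the origin the estimate is a routine almost-orthogonality/Schur argument, but near $\xi=\eta=0$ the eigenfunctions $\phi(R,\xi)$ acquire logarithmic factors (cf.\ the $|\log\xi|$ in \eref{phi-poinwise-bounds-b}) and $\rho$ degenerates, so one must track these carefully to see both that the off-diagonal decay in $|\xi^{1/2}-\eta^{1/2}|$ persists uniformly down to the origin and that the weight does not destroy Schur-boundedness on the full admissible range of $\alpha$. Everything else is bookkeeping with the $S$- and $IS$-space calculus and the bounds already established for $\phi$.
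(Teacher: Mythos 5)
The paper does not prove this lemma itself; it cites \cite{KST}. Independently of that, your proposal has a genuine gap, and it is conceptual rather than technical.

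The essential feature of $q\in S(1,\Q)$ is the singularity it carries at $a=1$: the class $\Q$ (Definition \ref{ds1}) contains near $a=1$ terms of type $(1-a)^{\nu+\frac12}(\log(1-a))^j$. You treat $a$ (together with $b,b_1,b_2$) as a \emph{frozen} parameter, so that $q$ becomes a smooth, bounded, $R$-analytic function with a well-decaying tail $q_1\in S^0(R^{-2}\log R,\Q)$, and then reduce to an almost-diagonal kernel estimate in the style of the $\K$ theorem. Were that reading of the hypothesis correct, multiplication by $q$ would be bounded on $\hr{\alpha}$ for \emph{every} $\alpha$, and no restriction would appear in the lemma. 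But in the application (Section \ref{no-te}) and in the intended meaning of the statement, the cone relation $a=R/(t\lambda)$ is in force: for each fixed $\tau$, $q$ is a function of $R$ on $[0,t\lambda]$ that picks up the $(1-a)^{\nu+\frac12}$-type singularity at $R=t\lambda$. In particular, the leading coefficient $c_{00}(a,b,b_1,b_2)$ is \emph{not} $R$-independent, so your first reduction ($q=q_\infty+q_1$ with $q_\infty$ acting as a scalar) already fails.

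That boundary singularity is precisely what produces the restriction $|\alpha|<\frac{\nu}{2}+\frac34$. Via the isometry of Lemma \ref{lemma-10}, multiplication by $q$ on $\hr{\alpha}$ corresponds to multiplication on $H^{2\alpha}(\R^2)$ by a radial function whose Sobolev regularity is limited by the exponent $\nu+\frac12$ at the cone boundary; the multiplier bound degenerates as $\alpha$ approaches the threshold, which is why a $\nu$-dependent bound appears. Your argument instead attributes the constraint to boundedness of the Schur weight $\langle\xi\rangle^{\alpha}\langle\eta\rangle^{-\alpha}\rho(\xi)^{1/2}\rho(\eta)^{1/2}\langle\xi\rangle^{-3/4}\langle\eta\rangle^{-3/4}$; but $\rho$ is determined by the operator $\el$ alone (Appendix \ref{appendix-A}) and has no dependence on $\nu$ whatsoever, so this quantity cannot be the source of a $\nu$-dependent restriction. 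The kernel/commutator machinery you describe is fine for the smooth, decaying-in-$R$ part of $q$ (where, indeed, it gives boundedness with no constraint on $\alpha$), but the missing and decisive ingredient is the multiplier estimate for the $(1-a)^{\nu+\frac12}$ piece, and that piece is the entire point of the hypothesis $|\alpha|<\frac{\nu}{2}+\frac34$.
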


\begin{lem}\label{n-l-2}
Let $\alpha>\frac{1}{4}$. Then
\begin{equation}
\nhr{R^{-3/2}fg}{\alpha+1/4}\lesssim\nhr{f}{\alpha+1/2}\nhr{g}{\alpha+1/2},
\end{equation}
respectively
\begin{equation}
\nhr{R^{-3/2}fg}{\alpha}\lesssim\nhr{f}{\alpha+1/4}\nhr{g}{\alpha+1/2},
\end{equation}
for all $f$, $g$ such that the right-hand sides are finite.
\end{lem}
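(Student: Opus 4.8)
The statement is one of the bilinear product estimates for the $\el$-adapted Sobolev spaces $\hr{\alpha}$, and it is the exact analogue of the corresponding estimate in \cite{KST}, where $\el$ comes from the $S^2$ harmonic map. My plan is to follow that argument, the only analytic input being the spectral information collected in Appendix \ref{appendix-A}; since Proposition \ref{phi-expansion-proposition} supplies exactly the same pointwise bounds on $\phi(R,\xi)$, $\partial_\xi\phi$ and $\partial_\xi^2\phi$, and the same asymptotics for $\rho$, as in \cite{KST}, the proof should carry over essentially verbatim.

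First I would trade the spectral-side definition of $\hr{\alpha}$ for a physical-space description. Using the bounds of Proposition \ref{phi-expansion-proposition}, the distorted Fourier transform $\mathcal{F}$ is comparable, as a map between weighted $L^2$ spaces, to the Hankel transform attached to the free operator $-\partial_R^2+\frac{3}{4R^2}$, which yields (for $\alpha$ in the admissible range) a Littlewood--Paley/Besov characterization of $\hr{\alpha}$. The three facts I would extract are: (i) an element of $\hr{\alpha}$ with $\alpha\ge0$ vanishes like $R^{3/2}$ as $R\to0$, with $\|R^{-3/2}u\|_{L^\infty(R\le1)}$ (and similar weighted norms) controlled by the $\hr{\alpha}$ norm; (ii) away from the origin $\hr{\alpha}$ coincides with $\langle\partial_R\rangle^{-\alpha}L^2$; and (iii) $\hr{\alpha}$ obeys Hardy-type inequalities that trade a factor $R^{-1}$ near the origin for a derivative, together with the radial Sobolev embeddings into weighted $L^p$ and $L^\infty$.

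Then I would split $R^{-3/2}fg$ into a part supported in $R\gtrsim1$ and one supported in $R\lesssim1$ (or, dyadically, into high and low frequency). On $R\gtrsim1$ the weight $R^{-3/2}$ is a bounded smooth symbol, so by (ii) the estimate reduces to the classical fractional Leibniz (Kato--Ponce) inequality, with the radial Sobolev embedding of (iii) used to place the lower-regularity factor in $L^\infty$ or an $L^p$; the hypothesis $\alpha>1/4$ is exactly what makes that embedding applicable. On $R\lesssim1$ I would use (i): writing $f=R^{3/2}f_1$, $g=R^{3/2}g_1$ with $f_1,g_1$ bounded near the origin by the $\hr{\alpha+1/2}$ norms, one has $R^{-3/2}fg=R^{3/2}f_1g_1$, and it remains to bound the $\hr{\alpha+1/4}$ norm of an $O(R^{3/2})$ function, which the Hardy inequalities of (iii) handle. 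The second inequality of the lemma follows from the same scheme with the fractional derivatives distributed unevenly between the two factors.

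The hard part will be the first step: setting up the physical-space characterization of $\hr{\alpha}$ uniformly over the relevant range of $\alpha$, and, in order to justify it, checking through Appendix \ref{appendix-A} that the general potential $V(R)=-R^{-2}[1-f'(Q(R))]$, which by Lemma \ref{useful-lemma-1} is real-analytic, even, and $O(R^{-2})$ at infinity just as in the $S^2$ case, produces the same $\phi$- and $\rho$-asymptotics as in \cite{KST}. Granting that, the bilinear estimates themselves are routine fractional calculus on the half line.
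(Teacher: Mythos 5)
The paper does not prove Lemma \ref{n-l-2}: it is stated together with Lemmas \ref{n-l-1} and \ref{n-l-3} with the single remark ``The following lemmas are proven in \cite{KST}.'' The KST proof, to which the paper defers, works directly on the spectral side: one decomposes $\widehat f$ and $\widehat g$ into dyadic frequency blocks, reconstructs the pieces via the distorted Fourier inversion, and then estimates $\mathcal{F}\bigl(R^{-3/2}f_jg_k\bigr)$ by hand, using the pointwise bounds \eref{phi-poinwise-bounds-a}--\eref{phi-poinwise-bounds-b} on $\phi(R,\xi)$ and the two-regime asymptotics of $\rho(\xi)$ to control the kernel. Your proposal is a genuinely different, physical-space route through the $T$-map and classical fractional calculus on $\R^2$; that route is natural here (the paper itself uses $T$ in Lemma \ref{lemma-10}), but as written it has a real gap.

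The gap is your item (i). You assert that a function $u\in\hr{\alpha}$ with $\alpha\ge0$ satisfies $\lVert R^{-3/2}u\rVert_{L^\infty(R\le1)}\lesssim\nhr{u}{\alpha}$, and you then write $f=R^{3/2}f_1$, $g=R^{3/2}g_1$ with $f_1,g_1$ bounded near the origin. This is false for the regularities the lemma actually uses. Via $T$, the condition $u\in\hr{\beta}$ is equivalent to $Tu=e^{i\theta}R^{-1/2}u\in H^{2\beta}(\R^2)$ (Lemma \ref{lemma-10}). Taking the smallest admissible case $\alpha=1/4+$, the input regularity is $\beta=\alpha+1/2=3/4+$, so $Tu\in H^{3/2+}(\R^2)\subset C^{0,1/2+}$, hence $|Tu(x)|\lesssim|x|^{1/2+}$ and $|u(R)|\lesssim R^{1+}$, \emph{not} $O(R^{3/2})$. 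Consequently $R^{-3/2}f$ blows up like $R^{-1/2+}$ near the origin, and the factorization you use in the low-$R$ region does not exist. The rescue is precisely the bilinearity that your decomposition throws away: each factor only vanishes like $R^{1+}$, but the \emph{product} $fg$ vanishes like $R^{2+}$, so $R^{-3/2}fg\sim R^{1/2+}$ is tame. A correct argument must distribute the weight across the two factors (or, equivalently, work with $T(R^{-3/2}fg)=e^{-i\theta}\,Tf\cdot Tg$ and control multiplication by the singular symbol $e^{-i\theta}$ via a Hardy inequality that uses the vanishing angular mean of $Tf\cdot Tg$). As written, replacing each factor by $R^{3/2}\times(\text{bounded})$ is not available and the near-origin step fails.

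Your high-$R$ step and the role of $\alpha>1/4$ (so that $2\alpha+1/2>1$ and the relevant 2D Sobolev embeddings kick in) are fine, and the preliminary remark that Proposition \ref{phi-expansion-proposition} delivers the same $\phi$- and $\rho$-asymptotics as in \cite{KST} is exactly the point the paper relies on when it cites the lemma. But the proof cannot be completed along the lines you sketch without replacing step (i) by an argument that genuinely uses the product structure near the origin.
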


\begin{lem}\label{n-l-3}
Let $\alpha>0$. Then 
\begin{equation}
\nhr{R^{-1}fgh}{\alpha}\lesssim\nhr{f}{\alpha+1/2}\nhr{g}{\alpha+1/2}\nhr{h}{\alpha},
\end{equation}
for all $f$, $g$, $h$ such that the right hand side is finite.
\end{lem}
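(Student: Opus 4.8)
## Proof Proposal for Lemma \ref{n-l-3}

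The plan is to reduce the trilinear estimate to a combination of the bilinear estimates already recorded in Lemma \ref{n-l-2}, using only the elementary observation that the weight $R^{-1}$ factors as $R^{-3/2}\cdot R^{1/2}$ after one of the three functions is paired off. Concretely, I would first write
\begin{equation}
R^{-1}fgh = R^{-3/2}\,(R^{1/2} f)\,(gh) \quad\text{is the wrong split},
\end{equation}
so instead the correct maneuver is to group $gh$ first: apply the second inequality of Lemma \ref{n-l-2} to the pair $(g,h)$ to obtain that $R^{-3/2}gh$ lies in $\hr{\alpha+1/4}$ with
\begin{equation}
\nhr{R^{-3/2}gh}{\alpha+1/4}\lesssim \nhr{g}{\alpha+1/2}\nhr{h}{\alpha+1/2}.
\end{equation}
Wait — more carefully: Lemma \ref{n-l-2} produces $R^{-3/2}gh$, but we need a single extra power $R^{-1}$ overall while keeping one factor of $f$ untouched. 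So the clean route is the two-step one below.

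First I would set $H = R^{-3/2} g h$. By the second display of Lemma \ref{n-l-2} (with the roles of the two arguments chosen so that $g$ gets the $\alpha+1/4$ slot is not needed here; the symmetric first display is cleaner), one has $\nhr{H}{\alpha+1/4}\lesssim \nhr{g}{\alpha+1/2}\nhr{h}{\alpha+1/2}$. Then $R^{-1}fgh = R^{1/2}\,f\,H$, and I would like to write this as $R^{-3/2}\cdot f\cdot (R^{2}H)$ — but that reintroduces a bad weight. The honest approach is: apply Lemma \ref{n-l-2} a second time to the pair $(f,H)$. Indeed
\begin{equation}
R^{-3/2}\,f\,(R^{1/2}\cdot R\cdot H)
\end{equation}
is still not it. Let me instead use the first inequality of Lemma \ref{n-l-2} directly on $(f, H)$ after noting $R^{-1}fgh = R^{-3/2} f\,(R^{1/2}gh) = R^{-3/2} f\, (R^{2} H)$; since multiplication by the symbol $R^{2}$ is not bounded on these spaces, this still fails, which tells me the intended proof does not literally iterate Lemma \ref{n-l-2} but rather re-runs its proof.

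Therefore the realistic plan is the following. I would revisit the structure behind Lemmas \ref{n-l-1}–\ref{n-l-2}: these are proved (in \cite{KST}) by passing to the distorted Fourier side, where $\hr{\alpha}$ is a weighted $L^2_\rho$ space, and where multiplication by $R^{-1/2}$ (coming from the explicit asymptotics of $\phi(R,\xi)$ in Proposition \ref{phi-expansion-proposition} and the bounds \eref{phi-poinwise-bounds-a}–\eref{phi-poinwise-bounds-b}) corresponds to an operator that gains $1/2$ derivative in $\xi$, i.e.\ maps $\lr{\alpha}\to\lr{\alpha+1/4}$ in the relevant range. Concretely: (i) write $R^{-1}fgh = (R^{-1/2}f)(R^{-1/2}g)(R^{0}h)$ after distributing; (ii) on the Fourier side, use the pointwise kernel bounds to control the trilinear convolution, splitting into frequency regimes $\xi\lesssim 1$ and $\xi\gtrsim 1$ exactly as in the bilinear case; (iii) in the low-frequency regime the $|\log\xi|$ losses from \eref{phi-poinwise-bounds-b} are absorbed by the extra room in $\alpha>0$; (iv) in the high-frequency regime the polynomial decay $\langle\xi\rangle^{-3/4}$ of $\phi$ gives summability of the dyadic pieces, with the constraint $\alpha+1/2$ on two of the three factors ensuring the off-diagonal convolution sums converge. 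Schematically one reduces to an estimate of the form $\nlr{K_3(\widehat f,\widehat g,\widehat h)}{\alpha}\lesssim \nlr{\widehat f}{\alpha+1/2}\nlr{\widehat g}{\alpha+1/2}\nlr{\widehat h}{\alpha}$ where $K_3$ is the trilinear form with kernel $\int_0^\infty \phi(R,\xi_1)\phi(R,\xi_2)\phi(R,\xi_3)\phi(R,\eta)\,R^{-1}\,dR$ against the three inputs.

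The main obstacle, as in \cite{KST}, is the $R$-integral defining this trilinear kernel: one must show it is integrable at $R=0$ (guaranteed by the $R^{3/2}$ vanishing of each $\phi$, so four factors of $\phi$ plus $R^{-1}$ gives $R^{6-1}=R^{5}$, amply integrable) and decays fast enough in the "output" frequency $\eta$ and in frequency differences to be summed dyadically. The low-frequency triple-logarithm losses are the delicate point: three factors can each contribute $|\log\xi_i|$, and one needs $\alpha>0$ strictly (not merely $\alpha\geq 0$) to beat $|\log\xi|^3$ by a power $\xi^{\alpha}$ — this is precisely why the hypothesis is $\alpha>0$ rather than $\alpha\geq 0$, and why only one of the three factors is allowed to sit at level $\alpha$ while the other two are pushed up to $\alpha+1/2$. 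I expect the bulk of the work to be bookkeeping of these dyadic sums, entirely parallel to the proof of Lemma \ref{n-l-2} in the reference, and I would ultimately cite \cite{KST} for the repetitive parts while indicating the one place (the power count $R^{5}$ at the origin, replacing the $R^{3}$ of the bilinear case) where the trilinear structure genuinely differs.
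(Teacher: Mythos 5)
The paper does not actually prove Lemma~\ref{n-l-3}: the sentence ``The following lemmas are proven in \cite{KST}'' precedes Lemmas~\ref{n-l-1}--\ref{n-l-3}, and no argument for any of them appears in the text. So there is no internal proof to measure you against, and your proposal has to be judged on its own terms.

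Your negative observation is correct and worth keeping: a black-box iteration of Lemma~\ref{n-l-2} does not close. Every factorization of $R^{-1}fgh$ into a pair falling under Lemma~\ref{n-l-2} either leaves a dangerous positive power of $R$ on the remaining factor (multiplication by $R^{2}$ is unbounded on $\hr{\alpha}$), or, after shifting the auxiliary index to $\alpha'=\alpha+\tfrac14$ so that the hypothesis $\alpha'>\tfrac14$ matches $\alpha>0$, forces an extra $+\tfrac14$ on one of $f,g,h$ which is precisely what the lemma forbids on $h$. Where the proposal stops short of a proof is the constructive half: the trilinear Fourier sketch (write the kernel as $\int_0^\infty\phi(R,\xi_1)\phi(R,\xi_2)\phi(R,\xi_3)\phi(R,\eta)R^{-1}\,dR$, split dyadically, absorb $|\log\xi|$ losses using $\alpha>0$, use $\langle\xi\rangle^{-3/4}$ decay at high frequency) is plausible but is declared rather than carried out, and you acknowledge this yourself. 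The two concrete checks you do make are sound: the $R^{3/2}$ vanishing of each $\phi$ gives $R^{6-1}=R^{5}$ near the origin (integrable, with more room than the bilinear $R^{3}$), and the strict inequality $\alpha>0$ is exactly what is needed to beat the accumulated logarithms at low frequency once the $\rho(\xi)\asymp(\xi\log^2\xi)^{-1}$ weight is taken into account. One remark that may save you work if you ever need to fill this in: Lemma~\ref{lemma-10} transports the estimate to a two-dimensional Sobolev product inequality
\begin{equation}
\left\lVert U_1U_2U_3\right\rVert_{H^{2\alpha}(\R^2)}\lesssim\left\lVert U_1\right\rVert_{H^{2\alpha+1}(\R^2)}\left\lVert U_2\right\rVert_{H^{2\alpha+1}(\R^2)}\left\lVert U_3\right\rVert_{H^{2\alpha}(\R^2)},
\end{equation}
which holds for $\alpha>0$ because $H^{2\alpha+1}(\R^2)\hookrightarrow L^\infty$ once $2\alpha+1>1$; but one then has to account for the mismatch between $T(R^{-1}fgh)=e^{i\theta}R^{-3/2}fgh$ and $Tf\,Tg\,Th=e^{3i\theta}R^{-3/2}fgh$, i.e.\ for multiplication by $e^{2i\theta}$ on $H^{2\alpha}(\R^2)$, which is harmless only in the low-regularity range relevant here and should be justified rather than ignored.
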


Now 
\begin{multline}
R^{1/2}\lambda^{-2}N_{2k-1}(\epsilon)=-R^{-3/2}\left[\left(f'(u_{2k-1})-f'(u_0)\right)\epsilon\cut+
\left(f(u_{2k-1}+\epsilon)-f(u_{2k-1})-f'(u_{2k-1})\epsilon\right)\right]=-R^{-3/2}[I+II].
\end{multline}
For the first term write
\begin{equation}
I(\epsilon)=\epsilon\sum_{l\geq2}\frac{1}{(l-1)!}f^{(l)}(u_{2k-1}-u_0)^{l-1}
\end{equation}
Remember that $(u_{2k-1}-u_0)\in\tl{2}\isq{1}{}{}{}$ and that, by Lemma \ref{useful-lemma-1}, $f^{(2m)}(u_0)\in IS^1(R^{-1},\Q)$, 
$f^{(2m+1)}(u_0)\in IS^0(1,\Q)$. Then
\begin{multline}
f^{(2m)}(u_0)(u_{2k-1}-u_0)^{2m-1}\in\tl{4m-2}\isq{2m}{2m-2}{2m-1}{}\\
\subset\tl{4m-2}\isq{2m}{2m}{2m-2}{}\subset\tl{2m}\isq{2m}{2m}{0}{}\\
\subset\tl{2} IS^2(R^2,\Q),
\end{multline}
and
\begin{multline}
f^{(2m+1)}(u_0)(u_{2k-1}-u_0)^{2m}\in\tl{4m}\isq{2m}{2m}{2m}{}\\
\subset\tl{2m}\isq{2m}{2m}{0}{}\subset\tl{2} IS^2(R^2,\Q).
\end{multline}
Therefore
\begin{equation}
R^{-3/2}I(R^{-1/2}\widetilde\epsilon)\in\tl{2}\epsilon IS^0(1)\subset \tau^{-2}\epsilon IS^0(1).
\end{equation}
(The last step uses the fact that $t\lambda\asymp\tau$.)\footnote{By $a\asymp b$ 
it is meant that there is a positive constant $C$ such that $C^{-1}a<b<Ca$.}
So
\begin{equation}
\widetilde\epsilon\to R^{-3/2}I(R^{-1/2}\widetilde\epsilon)
\end{equation}
has the desired mapping property.

The second term can be split into two
\begin{equation}
II(\epsilon)=II_1(\epsilon)+II_2(\epsilon),
\end{equation}
where
\begin{equation}
II_1=\sum_{l\geq1}\frac{1}{(2l)!}f^{(2l)}(u_{2k-1})\epsilon^{2l}
\end{equation}
and
\begin{equation}
II_2=\sum_{l\geq1}\frac{1}{(2l+1)!}f^{(2l+1)}(u_{2k-1})\epsilon^{2l+1}.
\end{equation}
By Lemma \ref{useful-lemma-2}, 
\begin{equation}
f^{(2l)}(u_{2k-1})\in\tl{2}\isq{1}{}{}{}\subset IS^1(R).
\end{equation}
Then
\begin{equation}
R^{-3/2}f^{(2l)}(u_{2k-1})(R^{-1/2}\widetilde\epsilon)^{2l}\in (R^{-1}\widetilde\epsilon^2)^{l-1}R^{-3/2}\widetilde\epsilon^2 IS^0(1).
\end{equation}
Now, by Lemmas \ref{n-l-1}, \ref{n-l-2}, and \ref{n-l-1}, it follows that $II_1$ has the right mapping property in the space variable. 
More precisely, the claim follows from the fact that
\begin{equation}
\left\lVert R^{-3/2}\widetilde\epsilon^2\right\rVert_{H_\rho^\alpha}\lesssim \left\lVert\widetilde\epsilon\right\rVert^2_{H_\rho^{\alpha+1/2}},
\end{equation}
that, as an operator,
\begin{equation}
\left\lVert R^{-1}\widetilde\epsilon^2\right\rVert_{H_\rho^\alpha\to H_\rho^\alpha}\lesssim
\left\lVert\widetilde\epsilon\right\rVert^2_{H_\rho^{\alpha+1/2}},
\end{equation}
and from Lemma \ref{n-l-1}.
The $\tau$ behavior follows from the fact that $II_1$ has no linear term in $\epsilon$, only higher powers.

Finally, note that
\begin{equation}
R^{-3/2}f^{(2l+1)}(u_{2k-1})(R^{-1/2}\widetilde\epsilon)^{2l+1}\in (R^{-1}\widetilde\epsilon^2)^{l-1}R^{-3}\widetilde\epsilon^3 IS^0(1).
\end{equation}
After noticing that
\begin{equation}
\left\lVert R^{-3}\widetilde\epsilon^3\right\rVert_{H_\rho^\alpha}\lesssim
\left\lVert R^{-3/2}\widetilde\epsilon^2\right\rVert_{H_\rho^{\alpha+1/4}}\left\lVert\widetilde\epsilon\right\rVert_{H_\rho^{\alpha+1/2}}\lesssim
\left\lVert\widetilde\epsilon\right\rVert^3_{H_\rho^{\alpha+1/2}},
\end{equation}
the argument that $II_2$ has the right mapping property is the same as the one above for $II_1$.

\section{The Conclusion of the Argument}\label{end}

To compare the Sobolev spaces $H_\rho^\alpha$ with the usual ones $H^\beta(\R^2)$, define a map
\begin{equation}
u(R)\to (Tu)(R,\theta)=e^{i\theta}R^{-1/2}u(R).
\end{equation}
This is easily seen to be an isometry $L^2(\R^+)\to L^2(\R^2)$.
\begin{lem}\label{lemma-10}
For any $\alpha\geq0$
\begin{equation}
\left\lVert u\right\rVert_{H_\rho^{\alpha/2}(\R^+)}\asymp  \left\lVert Tu\right\rVert_{H^\alpha(\R^2)}
\end{equation}
in the sense that if one side is finite then the other is also finite and they have comparable sizes.
\end{lem}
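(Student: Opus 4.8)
The plan is to recognize both sides as graph norms of fractional powers of self-adjoint operators, to conjugate $\el$ onto an operator on $\R^2$ by means of $T$, and thereby to reduce the statement to the comparison of two Sobolev scales on $\R^2$. First, by the Plancherel identity for the distorted Fourier transform $\mathcal F$ associated with $\el$ (Theorem \ref{tgz}) and the fact that $\el$ acts as multiplication by $\xi\ge 0$, one has $\|u\|_{\hr{\beta}}\asymp\|(1+\el)^{\beta}u\|_{L^2(\R^+)}$ for every $\beta\ge0$ (using $\langle\xi\rangle^{2\beta}\asymp(1+\xi)^{2\beta}$ on $\xi\ge0$); in particular the case $\alpha=0$ of the lemma is just the isometry property of $T$ already noted. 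Similarly $\|Tu\|_{H^{\alpha}(\R^2)}\asymp\|(1-\Delta_{\R^2})^{\alpha/2}Tu\|_{L^2(\R^2)}$. Next, a direct computation gives, for $h(R)=R^{-1/2}u(R)$,
\[
-\Delta_{\R^2}\bigl(e^{i\theta}h(R)\bigr)=e^{i\theta}\Bigl(-h''-\tfrac1R h'+\tfrac1{R^2}h\Bigr)=e^{i\theta}R^{-1/2}\Bigl(-u''+\tfrac{3}{4R^2}u\Bigr),
\]
so that $T(-\partial_R^2+\tfrac3{4R^2})=-\Delta_{\R^2}T$, and since multiplication by $V(R)$ on $\R^+$ corresponds to multiplication by the radial potential $V(|x|)$ on $\R^2$, we get the intertwining $T\el=(-\Delta_{\R^2}+V)T$.

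Because $V$ is radial, the range $\mathcal H_1:=\mathrm{ran}(T)=\{e^{i\theta}g(R):g\in L^2(\R^+,dR)\}$ is a reducing subspace for the self-adjoint operator $-\Delta_{\R^2}+V$, and $T$ is (up to the harmless constant $\sqrt{2\pi}$) a unitary from $L^2(\R^+)$ onto $\mathcal H_1$ which conjugates $\el$ to $(-\Delta_{\R^2}+V)|_{\mathcal H_1}$. By uniqueness of the functional calculus this yields $T(1+\el)^{s}=(1-\Delta_{\R^2}+V)^{s}T$ for all $s\ge0$ (the $\sqrt{2\pi}$ cancels). Combining the three displays, $\|u\|_{\hr{\alpha/2}}\asymp\|(1-\Delta_{\R^2}+V)^{\alpha/2}Tu\|_{L^2(\R^2)}$, while $\|Tu\|_{H^\alpha(\R^2)}\asymp\|(1-\Delta_{\R^2})^{\alpha/2}Tu\|_{L^2(\R^2)}$. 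Hence the lemma is equivalent to the assertion that $-\Delta_{\R^2}+V$ and $-\Delta_{\R^2}$ generate the same Sobolev scale, i.e.
\[
\|(1-\Delta_{\R^2}+V)^{\alpha/2}h\|_{L^2(\R^2)}\asymp\|(1-\Delta_{\R^2})^{\alpha/2}h\|_{L^2(\R^2)},\qquad h\in L^2(\R^2).
\]

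To prove this last equivalence I would use the properties of $V$. From $V(R)=-R^{-2}(1-f'(Q(R)))$, together with $f'(0)=1$, $f'(\rmx)=-1$ and Lemma \ref{lemma-Q}, the function $V$ is real-analytic on $[0,\infty)$, bounded, with all derivatives bounded and decaying like $R^{-2}$ (and better) at infinity; moreover the spectral analysis of Appendix \ref{appendix-A} (Theorem \ref{tgz}) gives $\el\ge0$, hence $1-\Delta_{\R^2}+V\ge1$. For $\alpha=2$, Kato--Rellich gives $\mathrm{Dom}(-\Delta_{\R^2}+V)=H^2(\R^2)$, and the two-sided bound $\|(1-\Delta_{\R^2}+V)h\|\asymp\|(1-\Delta_{\R^2})h\|$ follows from $\|Vh\|\le\|V\|_\infty\|h\|$ together with $\|h\|\le\|(1-\Delta_{\R^2}+V)h\|$ (positivity). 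For $\alpha=2m$, $m\in\mathbb N$, one iterates: expanding $(1-\Delta_{\R^2}+V)^m$ and using that each iterated commutator of $\Delta_{\R^2}$ with $V$ is of strictly lower order with bounded decaying coefficients, one obtains $(1-\Delta_{\R^2}+V)^m=(1-\Delta_{\R^2})^m+(\text{order}\le 2m-1)$; absorbing the lower-order terms by induction on $m$ gives $\mathrm{Dom}((1-\Delta_{\R^2}+V)^m)=H^{2m}(\R^2)$ with equivalent norms. Finally, for arbitrary $\alpha\ge0$ pick $m\ge\alpha/2$ and interpolate: by Kato's theorem on complex interpolation of fractional powers of a non-negative self-adjoint operator, $[L^2,\mathrm{Dom}((1-\Delta_{\R^2}+V)^m)]_{\alpha/(2m)}=\mathrm{Dom}((1-\Delta_{\R^2}+V)^{\alpha/2})$ with equivalent norms, and likewise with $V$ removed; since the endpoint spaces agree ($L^2$, and $H^{2m}$ by the previous step), so do the interpolation spaces, which is exactly the desired equivalence.

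The routine parts are the intertwining computation and the integer-power iteration. The step that genuinely relies on the analysis done earlier in the paper is the Sobolev-scale comparison: it uses $\el\ge0$ (equivalently, positivity of $1+\el$), which is supplied by Appendix \ref{appendix-A}, and it uses the smoothness and the $R^{-2}$-decay of $V$ — themselves consequences of $g$ being entire and of Lemma \ref{lemma-Q} — to push the comparison to all $\alpha\ge0$ rather than only to small exponents. The only mild nuisance is keeping track of the constant $\sqrt{2\pi}$ by which $T$ fails to be exactly norm-preserving, which is irrelevant for an $\asymp$-statement.
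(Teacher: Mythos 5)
Your proof is correct and follows essentially the same two ingredients as the paper: the paper compares the $\el$- and $\el_0$-scales on $\R^+$ (using that $V=\el-\el_0$ is bounded and smooth) and then intertwines $\el_0$ with $-\Delta_{\R^2}$ via the identity $\Delta Tu=T\el_0u$, whereas you intertwine $\el$ itself with $-\Delta_{\R^2}+V$ and then compare the $(-\Delta+V)$- and $-\Delta$-Sobolev scales on $\R^2$ --- the same two steps in the opposite order. Your write-up fills in the fractional-power comparison (Kato--Rellich, iteration, interpolation) that the paper asserts in one clause, and you also correctly flag the $\sqrt{2\pi}$ normalization that the paper glosses over in calling $T$ an isometry.
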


\begin{proof}
The spaces $\hr{\beta}(\R^+)$ are defined using fractional powers of the operator $\el$, but since $\el-\el_0$ is bounded in $L^2$ and in any $\hr{\beta}$, these spaces could be defined using $\el_0$ instead. The lemma follows from the identity
\begin{equation}
\triangle Tu=T\el_0u,
\end{equation}
which holds whenever $u\in L^2$ and $\el_0 u\in L^2$.
\end{proof}

Fix now a $\nu>1/2$, and an index $k$ sufficiently large (depending on $\nu$). So far $u_{2k-1}$ and $e_{2k-1}$ have only been defined inside the cone 
$\{r\leq t\}$. They can be extended to be supported in the cone $\{r\leq2t\}$ so that they have the same regularity and all relevant derivatives match on the boundary of the light-cone. Finally, choose $\alpha$ so that
\begin{equation}
\frac{1}{4}<\alpha<\frac{\nu}{2}.
\end{equation}

The error $e_{2k-1}$ has a singularity of the type $(1-a)^{\nu-1/2}\log^m(1-a)$ on the cone $a=1$, which means that $e_{2k-1}$ is in $H^\beta$ localy around $r=t$, as long as $\beta<\nu$. On the other hand, since $e_{2k-1}$ has order one at $R=0$, 
\begin{equation}
T(R^{1/2}e_{2k-1})=e^{i\theta}R(c_0(\tau)+c_1(\tau)R^2+c_2(\tau)R^4+\cdots),
\end{equation} 
which is smooth around $R=0$. Finally, taking into account the size of the error
\begin{equation}
e_{2k-1}=\oh\left(\frac{R(\log(2+R^2))^{2k-1}}{t^2(t\lambda)^{2k}}\right),
\end{equation}
it follows that for all $\alpha<\nu/2$,
\begin{equation}
\nhr{\lambda^{-2}R^{1/2}e_{2k-1}(t(\tau),\lambda^{-1}R)}{\alpha}\lesssim\tau^{-2k+2}.
\end{equation}
Using the Propositions \ref{proposition-K-mapping}, \ref{p-transport-1}, and \ref{nonlinear-terms}, equation \eref{equation-x} can be solved through a contraction principle argument with respect to the norm
\begin{equation}
\nllr{x}{N-2}{\alpha+1/2}+\nllr{\left(\partial_\tau-2\frac{\lambda_\tau}{\lambda}\xi\partial_\xi\right)x}{N-1}{\alpha}.
\end{equation}
The transference identity and Proposition \ref{proposition-K-mapping} give that $\widetilde\epsilon=\mathcal{F}^{-1}x$ satisfies
\begin{equation}
\nhr{\widetilde\epsilon(\tau)}{\alpha+1/2}\lesssim\tau^{2-N},\quad
\nhr{\left(\partial_\tau+\frac{\lambda_\tau}{\lambda}R\partial_R\right)\widetilde\epsilon(\tau)}{\alpha}\lesssim\tau^{1-N},\quad N\leq2k.
\end{equation}
By Lemma \ref{lemma-10}, this construction yields a solution to \eref{wm} on the cone $r\leq t$, $0<t<t_0$, which is of class $H^{1+\nu-}$ on the closure of the cone. To obtain a solution on all $(0,t_0)\times\R^2$ extend the data $u(t_0,\cdot)$, $\partial_tu(t_0,\cdot)$ to all of $\R^2$ with the same smoothness. The corresponding solution to \eref{wm} will coincide with the one constructed on the cone due to the finite speed of propagation. This solution satisfies the properties stated in Theorem \ref{the-theorem}.


\appendix

\section{An Analysis of the operator $\mathcal{L}$}\label{appendix-A}

The material bellow mostly parallels section 5 of \cite{KST}. The one main difference arises from the fact that a fundamental basis of solutions for 
$\el$ is not explicitly known here. This calls for a few changes in the proof of Proposition \ref{phi-expansion-proposition}, which coresponds to Proposition 5.4 in the reference. The asymptotic expansion for $\theta$ in that same Proposition has been omitted here, as it is not needed for the main result. The rest is close to identical to \cite{KST}.

Consider the operator on $L^2(0,\infty)$
\begin{equation}
\el=\el_0+V(r)=-\partial_r^2+\frac{3}{4r^2}+V(r);\quad\quad V(r)=-\frac{1}{r^2}\left[1-f'(Q(r))\right].
\end{equation}
As $r\to 0^+$, $V(r)\sim 1$ (i.e. $\lim_{r\downarrow0}V(r)=C$ for some real number $C$). As $r\to\infty$, $V(r)\sim 1/r^4$. 
Therefore both $\el$ and $\el_0$ are self-adjoint with domains
\begin{equation}
\dom(\el)=\dom(\el_0)=\left\{\varphi\in L^2(0,\infty):\varphi,\varphi'\in AC_{loc}, \el_0\varphi\in L^2\right\}.
\end{equation}

Notice that 
\begin{equation}
\el\phi_0=0,\quad\quad\phi_0(r)=r^{3/2}Q'(r).
\end{equation}
From (\ref{hm}) it follows that $\phi_0$ is positive. As $r\to 0^+$, $\phi_0(r)\sim r^{3/2}$, as $r\to\infty$, $\phi_0(r)\sim r^{-1/2}$, so $\phi_0\not\in L^2(0,\infty)$. This two remarks together with the Sturm oscillation theorem give:

\begin{lem}\label{lel1}
The spectrum of $\el$ is purely absolutely continuous and $\mathrm{spec}\;(\el)=[0,\infty)$.
\end{lem}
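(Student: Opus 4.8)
The plan is to argue in two steps: first that the spectrum of $\el$ is exactly $[0,\infty)$ as a set, and second that it is purely absolutely continuous on this set. For the first step, I would combine the potential-decay information with the existence of the zero-energy solution $\phi_0$. Since $V(r)=\oh(r^{-4})$ as $r\to\infty$, the operator $\el$ is a short-range perturbation of $\el_0=-\partial_r^2+\tfrac{3}{4}r^{-2}$, whose spectrum is $[0,\infty)$; standard perturbation theory (Weyl's criterion, or relative compactness of $V$ with respect to $\el_0$) gives $\sigma_{\mathrm{ess}}(\el)=\sigma_{\mathrm{ess}}(\el_0)=[0,\infty)$, so in particular $[0,\infty)\subset\sigma(\el)$. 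To get the reverse inclusion it suffices to rule out negative eigenvalues and an eigenvalue at $0$. The absence of an eigenvalue at $0$ is immediate: $\el\phi_0=0$ but $\phi_0(r)\sim r^{-1/2}$ at infinity, hence $\phi_0\notin L^2(0,\infty)$, so $0$ is not an eigenvalue. To exclude negative eigenvalues I would invoke the Sturm oscillation theorem: $\phi_0$ is a positive solution of $\el\phi_0=0$ on $(0,\infty)$ — positivity follows from $\phi_0=r^{3/2}Q'(r)$ together with $rQ'=g(Q)>0$ from \eqref{hm} — and a nodeless solution at energy $E=0$ forces $\el\ge 0$, i.e. there is no spectrum below $0$. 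Combining, $\sigma(\el)=[0,\infty)$ with no eigenvalue at the bottom.

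For the second step, purity of the absolutely continuous spectrum, I would again use the short-range nature of $V$. Since $V$ is bounded near the origin ($V(r)\to C$) and decays like $r^{-4}$ at infinity, it satisfies the hypotheses of the standard one-dimensional spectral theory for Schrödinger operators with a singular $\tfrac{3}{4}r^{-2}$ term at the origin (this is exactly the setting of Gesztesy--Zinchenko \cite{GZ} referenced in the introduction): the endpoint $r=0$ is in the limit-point case because of the repulsive $\tfrac{3}{4}r^{-2}$ term, and the endpoint $r=\infty$ is limit-point with a purely a.c. spectrum $[0,\infty)$ of multiplicity one, which is preserved under the integrable perturbation $V$. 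In particular there is no singular continuous spectrum and no embedded eigenvalues in $(0,\infty)$ — the latter because $r^{-4}\in L^1$ at infinity precludes $L^2$ solutions at positive energy (Kato's theorem, or a Wronskian/Levinson-asymptotics argument showing every solution at energy $\xi>0$ behaves like $\re\!\left[a(\xi)\xi^{-1/4}e^{iR\xi^{1/2}}\right]+\oh(R^{-2})$, which is never square-integrable). Together with the first step this yields that $\sigma(\el)$ is purely absolutely continuous and equal to $[0,\infty)$.

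The main obstacle, I expect, is not the soft part (essential spectrum, absence of negative/zero eigenvalues via Sturm) but rather the precise justification that the spectrum is \emph{purely} absolutely continuous, i.e. the exclusion of singular continuous spectrum and of eigenvalues embedded in $(0,\infty)$. This is where one genuinely needs the $r^{-4}$ decay rate rather than mere boundedness, and where the spectral-theoretic machinery of \cite{GZ} — construction of Weyl--Titchmarsh $m$-functions and the associated spectral measure for potentials with this type of inverse-square singularity — does the work. Given that the detailed asymptotics of the generalized eigenfunctions $\phi(R,\xi)$ are in any case established later (Proposition \ref{phi-expansion-proposition}, quoted in the transference-identity section), the cleanest route is simply to cite the relevant theorem of \cite{GZ} for the purity statement and supply the elementary oscillation/non-$L^2$ arguments above for the location of the spectrum.
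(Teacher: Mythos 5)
Your proposal is correct and follows essentially the same route as the paper. The paper's own ``proof'' consists of the three sentences preceding the lemma statement: it notes that $\phi_0=r^{3/2}Q'(r)$ is a positive zero-energy solution, that $\phi_0\notin L^2(0,\infty)$, and then asserts that ``these two remarks together with the Sturm oscillation theorem give'' the lemma; the purity of the a.c.\ spectrum is implicitly delegated to the short-range decay $V(r)\sim r^{-4}$ noted just above and to the Gesztesy--Zinchenko machinery recorded as Theorem~\ref{tgz}. You have simply made each of these implicit appeals explicit (Weyl for the essential spectrum, Sturm for nonnegativity, non-$L^2$ for absence of a zero eigenvalue, short-range/GZ for purity), which is the same skeleton.
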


To find a function $\theta_0$ such that together with $\phi_0$ it forms a fundamental system of $\el$, it is enough to ask that it satisfies 
$W(\phi_0,\theta_0)=1$. This, together with the initial condition $\theta_0(1)=0$, yields:
\begin{equation}
\theta_0(r)=-\phi_0(r)\int_1^r\frac{1}{\phi_0^2(s)}\;ds.
\end{equation}

\begin{lem}\label{lel2}
$\el$ has a fundamental system of solutions $\phi_0$, $\theta_0$ with the following properties:
\begin{itemize}
\item[i)] $\displaystyle\phi_0(r)=r^{3/2}Q'(r)$;
\item[ii)] $\displaystyle\phi_0(r)\sim r^{3/2}$, $\displaystyle\theta_0(r)\sim r^{-1/2}$ as $r\to0^+$;
\item[iii)] $\displaystyle\phi_0(r)\sim r^{-1/2}$, $\displaystyle\theta_0(r)\sim r^{3/2}$ as $r\to\infty$;
\item[iv)] $\displaystyle r^{-3/2}\phi_0(r)$ is real-analytic, $\displaystyle\phi_0(R)\in R^{3/2}S(R^{-2})$; 
\item[v)] $\displaystyle \theta_0(R)\in R^{3/2}S(R^0)$.
\end{itemize}
\end{lem}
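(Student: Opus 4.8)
The plan is to treat $\phi_0$ and $\theta_0$ separately: $\phi_0$ is read directly off the harmonic map, while $\theta_0$ is produced from $\phi_0$ by reduction of order and then analysed with Lemma~\ref{lemma-Q}. For $\phi_0$, write $\phi_0=r^{1/2}h$ with $h=rQ'=g(Q)$; from the conjugation identity $\el(r^{1/2}h)=-r^{1/2}\bigl(h''+\frac1r h'-\frac{f'(Q)}{r^2}h\bigr)$, part~(i) reduces to $h''+\frac1r h'-\frac{f'(Q)}{r^2}h=0$, which is a direct computation from $rQ'=g(Q)$ and $f=gg'$. Since $r^{-3/2}\phi_0=Q'$, the remaining assertions come from Lemma~\ref{lemma-Q}: part~(ii) there gives $Q'(r)=\mathcal{Q}(r^2)+2r^2\mathcal{Q}'(r^2)$, real-analytic, even, with $Q'(0)=\mathcal{Q}(0)>0$, yielding the behaviour of $\phi_0$ at $0$ and the analyticity in~(iv); part~(iii) gives $Q'(r)=r^{-2}P(r^{-2})$ with $P(x)=\widetilde{\mathcal{Q}}(x)+2x\widetilde{\mathcal{Q}}'(x)$ real-analytic and $P(0)=\widetilde{\mathcal{Q}}(0)>0$, so expanding $P$ in powers of $r^{-2}$ gives a convergent expansion of $Q'$ at infinity with leading term $\widetilde{\mathcal{Q}}(0)R^{-2}$ and no logarithms, i.e.\ $Q'\in S(R^{-2})$. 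This settles (i)--(iv) for $\phi_0$.

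Since $\phi_0>0$ on $(0,\infty)$ (as already noted), the reduction of order formula
\[
\theta_0(r)=-\phi_0(r)\int_1^r\frac{ds}{\phi_0(s)^2}
\]
defines a smooth solution of $\el\theta_0=0$ with $\{\phi_0,\theta_0\}$ a fundamental system of unit Wronskian. I would then expand the integrand $\phi_0(s)^{-2}=s^{-3}Q'(s)^{-2}$ at each end. Near $0$, $Q'(s)^{-2}$ is real-analytic and even (it is nonvanishing there), so $\int_1^r\phi_0^{-2}\,ds=-\frac{d_0}{2}r^{-2}+d_1\log r+C+\oh(r^2)$ with $d_0=\mathcal{Q}(0)^{-2}$; multiplying by $-\phi_0$ gives $\theta_0(r)\sim\frac{1}{2\mathcal{Q}(0)}r^{-1/2}$, the $r\to0^+$ claim in~(ii). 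Near $\infty$, $\phi_0(s)^{-2}=s\,P(s^{-2})^{-2}=\sum_j q_j s^{1-2j}$ with $P(s^{-2})^{-2}$ real-analytic in $s^{-2}$ and $q_0=\widetilde{\mathcal{Q}}(0)^{-2}$; integrating term by term, only the coefficient of $s^{-1}$ produces a logarithm, so $\int_1^r\phi_0^{-2}\,ds=\frac{q_0}{2}r^2+q_1\log r+C+(\text{a convergent series in }r^{-2})$, and multiplying by $-\phi_0=-r^{-1/2}P(r^{-2})$ gives $\theta_0(r)\sim-\frac{1}{2\widetilde{\mathcal{Q}}(0)}r^{3/2}$, i.e.\ (iii) for $\theta_0$.

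For (v) I would push the last computation one order further: distributing the product of the two convergent power series in $R^{-2}$ over the single logarithm gives
\[
R^{-3/2}\theta_0(R)=c_{00}+\sum_{i\geq1}\bigl(c_{i0}+c_{i1}\log R\bigr)R^{-2i},
\]
a convergent expansion at infinity carrying at most one power of $\log R$ at each order $R^{-2i}$, which is of the required $S(R^0)$ type. I expect this bookkeeping to be the only genuinely delicate point. The two things to verify are that no higher powers of $\log R$ appear — automatic here, since reduction of order integrates the expansion of $\phi_0^{-2}$ exactly once and hence produces exactly one logarithm, coming from the $s^{-1}$ coefficient — and that the expansions are genuinely convergent rather than merely asymptotic; the latter is precisely where the standing hypothesis that $g$ is entire enters, as it forces $\mathcal{Q}$, $\widetilde{\mathcal{Q}}$, and therefore $P^{-2}$ and $(Q')^{-2}$, to be real-analytic with positive radius of convergence. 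Everything else is a direct transcription of Lemma~\ref{lemma-Q}.
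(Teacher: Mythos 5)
Your argument is correct and is essentially the paper's proof, just written out in more detail. The paper treats (i)--(iv) as consequences of the preceding discussion (the definition $\phi_0=r^{3/2}Q'$, the formula $\theta_0=-\phi_0\int_1^r\phi_0^{-2}\,ds$, and Lemma~\ref{lemma-Q}) and devotes its one-paragraph proof entirely to (v), which it establishes exactly as you do: expand $\phi_0(s)^{-2}=\sum_k\overline\phi_k s^{3-2k}$ at infinity (convergent because $Q'(r)=r^{-2}P(r^{-2})$ with $P$ analytic, $P(0)>0$), integrate term by term so that only the $s^{-1}$ coefficient produces a single $\log R$, and multiply back by $\phi_0\in R^{3/2}S(R^{-2})$ to land in $R^{3/2}S(R^0)$. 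The verification that $\phi_0$ solves $\el\phi_0=0$ via $h=rQ'=g(Q)$, $h'=f(Q)/r$, is a nice explicit rendering of what the paper states without computation.
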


\begin{proof}
Only the last statement needs a proof. For large $s$, $\phi_0(s)^{-2}$ admits an absolutely convergent expansion of the form:
\begin{equation}
\frac{1}{\phi_0(s)^2}=\sum_{k=1}^\infty\overline\phi_k s^{3-2k}.
\end{equation}
Therefore, for large $R$, 
\begin{equation}
\theta_0(R)\in R^{3/2}\left[\overline{\overline\phi}_0+\overline{\overline\phi}_1 R^2+\overline{\overline\phi}_2\log R+ \overline{\overline\phi}_3 R^{-2}+\cdots\right]
S(R^{-2}).
\end{equation}
\end{proof}

The following theorem will be useful:

\begin{thm}[section 3 of \cite{GZ}, Theorem 5.3 of \cite{KST}]\label{tgz} 
With the notation above:
\begin{itemize}
\item[i)] For each $z\in\C$ there exists a fundamental system $\phi(r,z)$, $\theta(r,z)$ for $\el-z$ which is analytic in $z$ for each $r>0$ and has the asymptotic behavior
\begin{equation}
\phi(r,z)\sim r^{3/2},\quad\quad\theta(r,z)\sim\frac{1}{2}r^{-1/2}\quad\text{as }r\to0^+.
\end{equation}
In particular, their Wronskian is $W(\theta(\cdot,z),\phi(\cdot, z))=1$ for all $z\in\C$. By convention, $\phi(r,z)$, $\theta(r,z)$ are real-valued for $z\in\R$.
\item[ii)] For each $z\in\C$, $\im z>0$, let $\psi^{+}(r,z)$ denote the Weyl--Titchmarsh solution of $\el-z$ at $r=\infty$ normalized so that 
\begin{equation}
\psi^+(r,z)\sim z^{-1/4}e^{iz^{1/2}}r\quad\text{as }r\to\infty,\;\im z^{1/2}>0.
\end{equation}
If $\xi>0$, then the limit $\psi^+(r,\xi+i0)$ exists point-wise for all $r>0$ and it will be denoted by $\psi^+(r,\xi)$. Moreover, define 
$\psi^-(\cdot,\xi):=\overline{\psi^+(\cdot,\xi)}$. Then $\psi^+(r,\xi)$, $\psi^-(r,\xi)$ form a fundamental system of $\el-\xi$ with asymptotic behavior
\begin{equation}
\psi^\pm(r,\xi)\sim\xi^{-1/4}e^{\pm i\xi^{1/2}r}\quad\text{as }r\to\infty.
\end{equation} 
\item[iii)] The spectral measure of $\el$ is absolutely continuous and its density is given by
\begin{equation}\label{spectral-measure-a}
\rho(\xi)=\frac{1}{\pi}\im m(\xi+i0)\chi_{[\xi>0]},
\end{equation}
with the ``generalized Weyl--Titchmarsh'' function
\begin{equation}\label{spectral-measure-b}
m(z)=\frac{W(\theta,\cdot,z),\psi^+(\cdot,z))}{W(\psi^+(\cdot,z),\phi(\cdot,z))},\quad\im z\geq0.
\end{equation}
\item[iv)] The distorted Fourier transform defined as
\begin{equation}
\mathcal{F}:f\to\widehat f(\xi)=\lim_{b\to\infty}\int_0^b\phi(r,\xi)f(r)\;dr
\end{equation}
is a unitary operator from $L^2(\R^+)$ to $L^2(\R^+,\rho)$ and its inverse is given by 
\begin{equation}
\mathcal{F}^{-1}:\widehat f\to f(r)=\lim_{\mu\to\infty}\int_0^\mu\phi(r,\xi)\widehat f(\xi)\rho(\xi)\;d\xi.
\end{equation}
Here $\lim$ refers to the corresponding $L^2$ limit.
\end{itemize}
\end{thm}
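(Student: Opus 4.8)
The plan is to recover Theorem \ref{tgz} from the Weyl--Titchmarsh spectral theory of singular Sturm--Liouville operators, in the manner of Gesztesy and Zinchenko \cite{GZ}; the single feature that takes the argument outside the classical framework is the inverse-square singularity of $\el$ at $r=0$, which forces one to use $\el_0=-\partial_r^2+\tfrac34 r^{-2}$, rather than $-\partial_r^2$, as the free comparison operator. For part (i) one starts from the explicit fundamental system of $\el_0-z$ built from Bessel functions of order $1$: with $\phi_0(r,z)\propto\sqrt r\,J_1(z^{1/2}r)$ one has a solution with $\phi_0\sim r^{3/2}$ as $r\to0^+$, entire in $z$ for each fixed $r$, while the second solution $\theta_0(r,z)$ is produced by the Frobenius procedure, normalised to $\theta_0\sim\tfrac12 r^{-1/2}$, and is again entire in $z$ once the $\log z$ branch ---present because the indicial exponents $\tfrac32$ and $-\tfrac12$ differ by an even integer--- has been absorbed into an entire correction; then $W(\theta_0,\phi_0)=1$. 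Since $V$ is bounded, indeed real-analytic, near $r=0$ (recall $V(r)\to C$ there), the Volterra equation
\begin{equation}
\phi(r,z)=\phi_0(r,z)+\int_0^r\big(\phi_0(r,z)\theta_0(s,z)-\theta_0(r,z)\phi_0(s,z)\big)V(s)\,\phi(s,z)\,ds,
\end{equation}
and the companion equation for $\theta$, are solved by Neumann iteration; the bound $|V(s)|\lesssim1$ makes the series converge locally uniformly in $(r,z)$, so $\phi,\theta$ are analytic in $z$ and keep the leading behaviour $r^{3/2}$, $\tfrac12 r^{-1/2}$ of $\phi_0,\theta_0$. The Wronskian $W(\theta,\phi)$ does not depend on $r$, hence can be read off at $r\to0^+$, giving $\tfrac12\cdot\tfrac32-(-\tfrac14)\cdot1=1$.

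For part (ii) one works at the other endpoint, where $\tfrac34 r^{-2}+V(r)$ is integrable (recall $V(r)\sim r^{-4}$), so $\el-z$ is a short-range perturbation of $-\partial_r^2-z$. For $\im z^{1/2}>0$ define $\psi^+(r,z)$ as the solution of
\begin{equation}
\psi^+(r,z)=z^{-1/4}e^{iz^{1/2}r}+\int_r^\infty\frac{\sin\!\big(z^{1/2}(s-r)\big)}{z^{1/2}}\Big(\tfrac34 s^{-2}+V(s)\Big)\psi^+(s,z)\,ds,
\end{equation}
which is the Weyl--Titchmarsh solution at $+\infty$: integrability of the potential makes the iteration converge and forces the normalisation $\psi^+\sim z^{-1/4}e^{iz^{1/2}r}$, and ---the one genuinely delicate point--- it permits the boundary value $\psi^+(r,\xi+i0)$ to be taken for every $\xi>0$, a limiting-absorption statement that again uses only the $r^{-4}$ decay at infinity (so the singular factor $r^{-2}$ matters only near $r=0$). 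Putting $\psi^-:=\overline{\psi^+}$ and computing the (now $r$-independent) Wronskian from the asymptotics gives $W(\psi^+,\psi^-)=-2i\neq0$, so $\{\psi^+,\psi^-\}$ is a fundamental system of $\el-\xi$ for $\xi>0$.

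Parts (iii) and (iv) then follow from the standard singular Weyl--Kodaira--Titchmarsh theory. Both endpoints are limit point ---$\tfrac34 r^{-2}$ at $0$ is the borderline limit-point case and the short-range decay makes $+\infty$ limit point--- so $\el$ is self-adjoint on the stated domain, and for $\im z>0$ its resolvent has kernel $\phi(\min(r,s),z)\,\psi^+(\max(r,s),z)/W(\psi^+(\cdot,z),\phi(\cdot,z))$. Expanding $\psi^+$ in the basis $\{\phi,\theta\}$ near $r=0$ identifies the generalized Weyl--Titchmarsh function $m(z)=W(\theta(\cdot,z),\psi^+(\cdot,z))/W(\psi^+(\cdot,z),\phi(\cdot,z))$, and the Titchmarsh--Kodaira (Stone) formula gives the spectral measure $d\rho(\xi)=\tfrac1\pi\im m(\xi+i0)\,\chi_{[\xi>0]}\,d\xi$; absolute continuity and $\mathrm{spec}(\el)=[0,\infty)$ are already recorded in Lemma \ref{lel1}. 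Since exactly one endpoint is singular (and limit point), the eigenfunction-expansion theorem furnishes a single spectral function and hence a rank-one expansion in $\phi(r,\xi)$ alone, which is precisely the unitary operator $\mathcal F:L^2(\R^+,dr)\to L^2(\R^+,\rho\,d\xi)$, $\mathcal F f(\xi)=\int_0^\infty\phi(r,\xi)f(r)\,dr$, with inverse $\widehat f\mapsto\int_0^\infty\phi(r,\xi)\widehat f(\xi)\rho(\xi)\,d\xi$, the limits being understood in $L^2$.

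The main obstacle ---and the reason the theorem is quoted from \cite{GZ} rather than from classical scattering theory--- is exactly the inverse-square singularity at $r=0$: the usual Jost-solution framework presupposes $\int_0^1 r|V(r)|\,dr<\infty$, which fails here, so $\el$ cannot be compared with $-\partial_r^2$ and one must instead build everything on the explicitly solvable $\el_0$ and perturb only by the genuinely bounded remainder $V$. Once that normalisation is in place, the remaining technical point is the limiting-absorption argument producing the boundary values $\psi^+(\cdot,\xi+i0)$ for $\xi>0$; the passage from there to the spectral representation in (iii)--(iv) is routine.
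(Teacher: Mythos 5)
Note first that the paper does not prove Theorem \ref{tgz} at all: it is imported verbatim, with the citation ``section 3 of \cite{GZ}, Theorem 5.3 of \cite{KST}'' built into the statement, and there is no argument to compare yours against. What you have written is therefore a reconstruction of the Gesztesy--Zinchenko framework, and as a high-level outline it is faithful: building the reference fundamental system from the explicitly solvable $\el_0$ rather than from $-\partial_r^2$, handling the $\log z$ branch of the order-one Bessel $Y_1$ by subtracting an entire multiple of $\phi_0$, constructing $\psi^+$ by a Jost-type integral equation and limiting absorption, and then invoking Weyl--Kodaira--Titchmarsh for (iii)--(iv) is exactly what \cite{GZ} do, and your identification of the inverse-square singularity as the reason for working with $\el_0$ is the right emphasis.

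There is, however, a concrete gap in your treatment of $\theta$. The ``companion'' Volterra equation that you gesture at, namely
\begin{equation*}
\theta(r,z)=\theta_0(r,z)+\int_0^r\bigl(\phi_0(r,z)\theta_0(s,z)-\theta_0(r,z)\phi_0(s,z)\bigr)V(s)\,\theta(s,z)\,ds,
\end{equation*}
is ill-posed when anchored at the singular endpoint $r=0$: since both $\theta_0(s,z)$ and $\theta(s,z)$ behave like $\tfrac12 s^{-1/2}$ there and $V$ is merely bounded, the integrand contains a term of size $s^{-1}$ and the iteration does not converge. The boundedness of $V$ near $0$ saves the $\phi$ equation (where the worst term is $\sim s$), but not the $\theta$ equation. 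In \cite{GZ} the second solution is instead constructed from Cauchy data at some interior point $r_0>0$ (equivalently, by a Volterra iteration anchored at $r_0$), and the asymptotics $\theta(r,z)\sim\tfrac12 r^{-1/2}$ as $r\to0^+$, possibly with a subleading $r^{3/2}\log r$ contribution, are then \emph{derived} from the Frobenius structure of the singular point rather than imposed by the integral equation. Without this modification your Neumann series for $\theta$ does not make sense, even though the final conclusion (including the Wronskian value $W(\theta,\phi)=1$, which you compute correctly from the leading asymptotics) is of course right. The rest of the outline --- the short-range perturbation at $+\infty$, the limit-point classification at both endpoints, the identification of $m(z)$ from the resolvent kernel and the Stone formula --- is accurate and matches the cited source.
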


\begin{pp}\label{phi-expansion-proposition}
The $\phi(r,z)$ in Theorem \ref{tgz} admits the asolutely convergent expansion:
\begin{equation}
\phi(r,z)=\phi_0(r)+r^{-1/2}\sum_{j=1}^\infty(r^2z)^j\phi_j(r^2),
\end{equation}
where the functions $\phi_j$ are real-analytic on $[0,\infty)$ and satisfy the bounds
\begin{equation}
|\phi_j(u)|\leq\frac{C_2 C^j}{(j-1)!}\log(1+|u|),\quad|\phi_1(u)|>C\log u\;\text{if }u\gg1,
\end{equation}
where $C$, $C_2$ are positive constants.
In particular, $\phi_j(0)=0$ and $|\phi_j'(0)|\leq C_2C^j/(j-1)!$ for $j=1,2,\ldots$. 
\end{pp}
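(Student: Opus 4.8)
The plan is to obtain $\phi(r,z)$ as a convergent power series in $z$ by solving the inhomogeneous equation $(\el-z)\phi=0$ iteratively, using the fundamental system $\phi_0,\theta_0$ from Lemma \ref{lel2}, and then to track the growth of the coefficients $\phi_j$ by induction. First I would write $\phi(r,z)=\sum_{j\geq0}z^j\psi_j(r)$ with $\psi_0=\phi_0$, and plug into $\el\phi=z\phi$; matching powers of $z$ gives the recursion $\el\psi_j=\psi_{j-1}$, $j\geq1$, together with the requirement that $\psi_j$ vanish of higher order than $\phi_0$ at $r=0$ (so that the normalization $\phi(r,z)\sim r^{3/2}$ of Theorem \ref{tgz}(i) is preserved). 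Since $W(\phi_0,\theta_0)=1$ and $\phi_0\sim r^{3/2}$, $\theta_0\sim r^{-1/2}$ at the origin, the Duhamel formula with zero Cauchy data compatible with the normalization reads
\begin{equation}
\psi_j(r)=\theta_0(r)\int_0^r\phi_0(s)\psi_{j-1}(s)\,ds-\phi_0(r)\int_0^r\theta_0(s)\psi_{j-1}(s)\,ds,
\end{equation}
and one checks that the integrals converge at $s=0$ and produce a function vanishing of the correct order there.

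Next I would extract the claimed structural form. Since $r^{-3/2}\phi_0(r)$ is real-analytic and even in $r$ (Lemma \ref{lel2}(iv), from $\phi_0=r^{3/2}Q'$ and Lemma \ref{lemma-Q}(ii)), and since $\theta_0$ has the form $r^{3/2}$ times an analytic function of $r^2$ plus a $\log r$ term (Lemma \ref{lel2}(v)), an induction on $j$ shows each $\psi_j(r)$ is of the form $r^{-1/2}(r^2)^j\phi_j(r^2)$ with $\phi_j$ real-analytic on $[0,\infty)$: the two powers of $r$ picked up at each application of the Green's kernel — one from the extra $r^2$ in the source, one from integrating — combine with the $r^{3/2}$ and $r^{-1/2}$ prefactors of $\theta_0$, $\phi_0$ to raise the power of $r^2$ by exactly one and keep everything analytic in $r^2$ (the $\log r$ in $\theta_0$ is controlled because it is multiplied by high powers of $r$ and integrated against analytic data, producing again $\log$-type but analytic-times-$\log$ contributions that, after the subtraction in the Duhamel formula, stay within the stated class). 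In particular $\phi_j(0)=0$ for $j\geq1$, as the explicit power $(r^2z)^j$ already accounts for the vanishing.

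Then I would prove the quantitative bounds $|\phi_j(u)|\leq C_2C^j\log(1+|u|)/(j-1)!$ by induction. Writing $m_j:=\sup_{u}|\phi_j(u)|/\log(1+|u|)$, the Duhamel formula together with the decay estimates $\phi_0(r)\in r^{3/2}S(R^{-2})$, $\theta_0(r)\in r^{3/2}S(R^0)$ lets me bound $\int_0^r|\phi_0(s)\psi_{j-1}(s)|\,ds$ and $\int_0^r|\theta_0(s)\psi_{j-1}(s)|\,ds$: in each the integrand behaves like $s^{1}(s^2)^{j-1}\log(1+s^2)\cdot(\text{decaying factor})$ near infinity, so the integral at scale $r$ is of size $r^{2j}\log(1+r^2)$ up to a constant independent of $j$, and multiplying by the $r^{-1/2}$ or $r^{3/2}$ prefactor and dividing out $(r^2)^j$ gives $m_j\leq C\,m_{j-1}\cdot\frac{1}{j}$ for some absolute $C$ — the $1/j$ coming from the fact that each extra integration against a factor $s^{2}$ effectively gains a power, which upon normalizing by the prescribed $(r^2)^j$ leaves a combinatorial gain. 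Iterating yields $m_j\leq C_2 C^j/(j-1)!$. The lower bound $|\phi_1(u)|>C\log u$ for $u\gg1$ follows by computing $\psi_1$ directly: $\psi_1=\theta_0\int_0^r\phi_0^2-\phi_0\int_0^r\theta_0\phi_0$, and since $\phi_0^2\sim r^{-1}$ at infinity the first integral grows like $\log r$, while $\theta_0\sim r^{3/2}$, giving $\psi_1\sim r^{3/2}\log r$, i.e. $\phi_1(r^2)\sim\log r^2$. Absolute convergence of $\sum (r^2z)^j\phi_j(r^2)$ is then immediate from the factorial decay, and analyticity in $z$ follows; the normalization at $r=0$ and the matching of Wronskians are inherited from $\phi_0,\theta_0$. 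The resulting $\phi$ agrees with the one in Theorem \ref{tgz} by uniqueness of the solution with the prescribed behavior at the origin.

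The main obstacle I anticipate is the inductive bookkeeping in the second and third steps: showing that the $\log r$ term in $\theta_0$ does not destroy the real-analyticity-in-$r^2$ structure of the $\phi_j$ after repeated Duhamel iterations, and simultaneously getting the $1/(j-1)!$ gain with a \emph{$j$-independent} constant $C$ in the decay estimates. Both hinge on exploiting that the source in the $j$-th step carries the extra factor $(r^2)^{j}$ relative to $\phi_0$, so that the integrals are dominated at the upper endpoint and the constants do not accumulate; making this uniform — rather than merely finite for each $j$ — is the delicate point, and is exactly where one uses that $g$ (hence $V$, hence the asymptotics of $\phi_0$ and $\theta_0$) is entire with the precise decay recorded in Lemma \ref{lel2}.
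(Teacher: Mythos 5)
Your proposal matches the paper's strategy step for step: expand $\phi(r,z)$ as a power series in $z$, obtain each coefficient by the Duhamel formula with the fundamental system $\phi_0,\theta_0$ and zero Cauchy data at $r=0$, and then run an induction to get the $1/(j-1)!$ gain, with the lower bound on $\phi_1$ by direct computation. (Indeed the paper writes $\phi=r^{-1/2}\sum_j z^j f_j(r)$ with $\el(r^{-1/2}f_j)=r^{-1/2}f_{j-1}$ and $H(r,s)=\frac12[\phi_0(r)\theta_0(s)-\phi_0(s)\theta_0(r)]\mathbf{1}_{[r>s]}$, which is your formula up to the $\frac12$ normalization and a sign.) However, the two places you flag at the end as ``the main obstacle'' are not placeholder worries: they are exactly the two pieces of substance in the paper's proof, and your sketch does not actually close them.

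First, for the real-analyticity of $\phi_j$ in $u=r^2$. You say the $\log r$ term in $\theta_0$ ``is controlled'' and ``stays within the stated class,'' but that is the statement to be proved, not an argument. (Also note that Lemma~\ref{lel2}(v) describes $\theta_0$ near $R=\infty$; the relevant $\log$ here is the one near $r=0$.) The paper makes this precise by isolating the singular part explicitly: writing $\theta_0(r)=-r^{-2}\phi_0(r)\chi(r)$ with $\chi(r)=r^2\int_1^r\phi_0^{-2}=r^2\int_{Q(1)}^{Q(r)}g(\rho)^{-3}\,d\rho$, it expands $\chi(r)=-\tfrac12 g'''(0)\,r^2\log r+(\text{analytic})$ and then shows the apparent logarithmic singularity in $f_j$ is removable by a monodromy argument: fix a branch of $\log$ holomorphic in $\C\setminus\R^-$ and check that $f_j(r+i0)=f_j(r-i0)$ for $r<0$, which reduces to $\log(s+i0)-\log(r+i0)=\log(s-i0)-\log(r-i0)$ for $s,r<0$. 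This gives that each $f_j$ is an even analytic function of $r$ on a uniform neighborhood of $\R$, hence $\phi_j(u)=u^{-j}f_j(\sqrt u)$ is real-analytic in $u\geq0$. Without some argument of this type, the claim that the $\phi_j$ lie in the stated class is unproven.

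Second, the $j$-independent constant $C$ in the bound $|\phi_j(u)|\leq C_2C^j\log(1+u)/(j-1)!$. You correctly identify where the $1/j$ must come from, but ``multiplying by the $r^{-1/2}$ or $r^{3/2}$ prefactor and dividing out $(r^2)^j$'' based on endpoint asymptotics does not by itself yield a \emph{single} constant valid for all $u$ and all $j$; in particular the two terms in the Duhamel formula individually have worse behavior and must be combined before estimating. The paper resolves this by passing to $u=r^2$, $v=s^2$ so that the recursion reads
$\widetilde f_j(u)=\tfrac14\int_0^u \tfrac{B(u)B(v)}{v}\left[vX(u)-uX(v)\right]\widetilde f_{j-1}(v)\,dv$
with $B(u)=Q'(\sqrt u)$, $X(u)=\chi(\sqrt u)$, then using $vX(u)-uX(v)=\tfrac{uv}{2}\int_v^u\tfrac{dw}{w^2B(w)^2}\geq 0$ (which also gives positivity and monotonicity of $\widetilde f_j$) and the two-sided bound $C_1\leq(1+u)B(u)\leq C_2$ to obtain the clean kernel estimate
$\tfrac{B(u)B(v)}{v}\left[vX(u)-uX(v)\right]\leq C\,\tfrac{u}{v}$
with an \emph{absolute} $C$. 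The induction then closes via $\int_0^u v^{j-2}\log(1+v)\,dv\leq\tfrac{1}{j-1}u^{j-1}\log(1+u)$. This explicit cancellation in $vX(u)-uX(v)$, rather than separate decay of $\phi_0$ and $\theta_0$, is what makes the constant uniform. Your lower-bound computation for $\phi_1$ is on the right track and essentially matches the paper's.
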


\begin{proof}
First make the ansatz
\begin{equation}
\phi(r,z)=r^{-1/2}\sum_{j=0}^\infty z^jf_j(r).
\end{equation}
The functions $f_j$ will be constructed such that the series converges in a ``reasonable'' sense. They should solve
\begin{equation}
\el(r^{-1/2}f_j)=r^{-1/2}f_{j-1},\quad\quad f_{0}(r)=r^{1/2}\phi_0(r).
\end{equation}
To obtain the $f_j$'s,  the ``forward fundamental solution'' of $\el$ is used:
\begin{equation}
H(r,s)=\frac{1}{2}\left[\phi_0(r)\theta_0(s)-\phi_0(s)\theta_0(r)\right]1_{[r>s]},
\end{equation}
therefore
\begin{equation}
f_j(r)=\frac{1}{2}\int_0^r r^{1/2}s^{-1/2}\left[\phi_0(r)\theta_0(s)-\phi_0(s)\theta_0(r)\right]f_{j-1}(s)\;ds.
\end{equation}
Remembering that $\phi_0(r)=r^{3/2}Q'(r)$ and using the notation $\chi(r)=r^2\int_1^{r}\frac{ds}{\phi_0^2(s)}$ 
(so that $\theta_0(r)=-r^{-2}\phi_0(r)\chi(r)$), the identity above becomes:
\begin{equation}
f_j(r)=\frac{1}{2}\int_0^r\frac{Q'(r)Q'(s)}{s}\left[s^2\chi(r)-r^2\chi(s)\right]f_{j-1}(s)\;ds.
\end{equation}

Note now that $\chi(r)$ can be written as:
\begin{equation}
\chi(r)=r^2\int_1^r\frac{1}{g(Q(s))^3}Q'(s)\;ds=r^2\int_{Q(1)}^{Q(r)}\frac{1}{g(\rho)^3}\;d\rho.
\end{equation}
Using the assumptions made on $g$ this gives:
\begin{align}
\chi(r)&=r^2\left[\frac{1}{2}\left(Q(1)^{-2}-Q(r)^{-2}\right)-\frac{1}{2}g'''(0)\left(\log Q(r)-\log Q(1)\right)+\cdots\right]\\
&=-\frac{1}{2}g'''(0)r^2\log r +(\text{terms analytic at 0}).
\end{align}
It follows then (by induction) that the singularity $f_j$ might have at zero is isolated and, in fact, removable. 
To see this, choose a branch of the logarithm which is holomorphic in $\C\setminus\R^-$. It is necessary to show that $f_j(r+i0)=f_j(r-i0)$ for $r<0$. Disregarding the terms not involving logarithms, it is enough to show that for any  holomorphic function $g$
\begin{equation}
\int_0^{r+i0}[\log s-\log(r+i0)]g(s)\;ds=\int_0^{r-i0}[\log s-\log(r-i0)]g(s)\;ds,
\end{equation}
which is obvious since for $s<0$
\begin{equation}
\log(s+i0)-\log(r+i0)=\log(s-i0)-\log(r-i0).
\end{equation}
Therefore, each 
$f_j$ is an even analytic function in a (uniform) neighborhood of the real line. Also, the asumption that $f_{j-1}(r)\sim r^2$ at zero implies 
$f_j(0)=0$, so $f_j(r)\sim r^2$ at zero. Induction gives that $f_j(0)=0$ for all $j$.

For the rest of this proof, let $u=r^2$, $v=s^2$, $f_j(r)=\widetilde f_j(u)$, $Q'(r)=B(u)$, $\chi(r)=X(u)$. It is easy to see that there are positive constants $C_1$, $C_2$ such that 
\begin{equation}
C_1\leq(1+u)B(u)\leq C_2,\quad\forall u.
\end{equation}
With this notation
\begin{equation}
\widetilde f_j(u)=\frac{1}{4}\int_0^u\frac{B(u)B(v)}{v}\left[vX(u)-uX(v)\right]\widetilde f_{j-1}(v)\;dv.
\end{equation}
Also
\begin{equation}
X(u)=\frac{u}{2}\int_1^u\frac{dv}{v^2B(v)^2}.
\end{equation}
Therefore, if $v\leq u$,
\begin{equation}
vX(u)-uX(v)>0,
\end{equation}
which makes (by induction and the fact that $\widetilde f_0>0$ and is increasing; see Lemma \ref{lemma-Q}) each $\widetilde f_j$ positive and increasing, 
\begin{multline}
vX(u)-uX(v)=\frac{uv}{2}\int_v^u\frac{dw}{w^2B(w)^2}
\leq Cuv\left(\frac{1}{v}-\frac{1}{u}+\log\frac{u}{v}+(u-v)\right)\\\leq Cuv\left(\frac{u-v}{uv}+\frac{u}{v}+(u-v)\right)
\leq Cu(1+u+uv),
\end{multline}
and
\begin{multline}\label{who-is-C}
\frac{B(u)B(v)}{v}\left[vX(u)-uX(v)\right]\\\leq C\frac{u}{v}[B(u)(1+u)B(v)+uB(u)vB(v)]\leq C\frac{u}{v}.
\end{multline}

Note that $\widetilde f_0(u)=uB(u)$. Then
\begin{equation}
\left|\widetilde f_1(u)\right|\leq C C_2\int_0^u\frac{u}{v}\frac{v}{1+v}\;dv=C C_2u\log(1+u).
\end{equation}
By induction, using the fact that 
\begin{equation}
\int_0^ux^{j-1}\log(1+x)\;dx\leq\frac{1}{j}u^j\log(1+u),
\end{equation}
it follows that
\begin{equation}
\left|\widetilde f_j(u)\right|\leq\frac{C_2 C^j}{(j-1)!} u^j\log(1+u),
\end{equation}
where $C$ is the same constant from the last inequality in \ref{who-is-C}.

Finally, consider
\begin{align}
\widetilde f_1(u)&=\frac{1}{4}\int_0^u\frac{B(u)B(v)}{v}\left[vX(u)-uX(v)\right]vB(v)\;dv\nonumber\\
&=\frac{1}{8}\int_0^udvB(u)B(v)^2uv\int_v^u\frac{dw}{w^2B(w)^2}.
\end{align}
Using the fact that $uB(u)$ is bounded and increasing (see Lemma \ref{lemma-Q}),
\begin{multline}
\widetilde f_1(u)\geq\frac{1}{8}uB(u)\int_0^udv \frac{v^2B(v)^2}{v}\frac{1}{u^2B(u)^2}(u-v)\\
\geq\frac{1}{8}\frac{1}{uB(u)}\int_1^uv^2B(v)^2\left(\frac{u}{v}-1\right)\;dv\geq CB(1)^2\left[u\log u-(u-1)\right].
\end{multline}
So, for $u\gg1$, 
\begin{equation}
\widetilde f_1(u)\geq Cu\log u.
\end{equation}
\end{proof}

Note that the logarithmic behavior of $\phi_1(u)$ for large $u$ is inherited by $\phi(r,\xi)$. If $1\gg\xi>0$ and $r=\delta\xi^{-1/2}$, where $\delta>0$ is a small absolute constant, then
\begin{equation}
\phi(r,\xi)\gtrsim r^{-1/2}\log r.
\end{equation}

The next proposition deals with $\psi^+$:

\begin{pp}\label{psi-expansion-proposition}
For any $\xi>0$, the solution $\psi^+(\cdot,\xi)$ from Theorem \ref{tgz} is of the form 
\begin{equation}
\psi^+(r,\xi)=\xi^{-1/4}e^{ir\xi^{1/2}}\sigma(r\xi^{1/2},r),\quad r^2\xi\gtrsim1,
\end{equation}
where $\sigma$ admits the asymptotic series approximation
\begin{equation}
\sigma(q,r)\approx\sum_{j=0}^\infty q^{-j}\psi^+_j(r),\quad\psi_0^+=1, \psi_1^+=\frac{3i}{8}+\oh\left(\frac{1}{1+r^2}\right),
\end{equation}
with zero order symbols $\psi^+_j$ that are analytic at infinity,
\begin{equation}\label{psi-j-bound}
\sup_{r>0}\left|(r\partial_r)^k\psi_j^+(r)\right|<\infty,
\end{equation}
in the sense that for all large integers $j_0$, and all idices $\alpha$, $\beta$, it holds that
\begin{equation}
\sup_{r>0}\left|(r\partial_r)^\alpha(q\partial_q)^\beta\left[\sigma(q,r)-\sum_{j=0}^{j_0} q^{-j}\psi^+_j(r)\right]\right|
\leq c_{\alpha,\beta, j_0}q^{-j_0-1}
\end{equation}
for all $q>1$.
\end{pp}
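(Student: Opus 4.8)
The plan is to follow the scheme of section~5 of \cite{KST}, the only genuinely new input being that the potential $V$ enjoys the same symbol structure here as for the round sphere; this is what Lemma \ref{lemma-Q} buys us.

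First I would reduce to an ODE. Fix $\xi>0$, write $q=r\xi^{1/2}$, and set $\sigma(q,r)=\xi^{1/4}e^{-ir\xi^{1/2}}\psi^+(r,\xi)$ with $\xi=q^2/r^2$; by Theorem \ref{tgz} this is a well-defined smooth function of $(q,r)$ on the relevant region. Since $\partial_r|_\xi=\tfrac qr\partial_q+\partial_r$ when $\sigma$ is regarded as a function of the two variables $(q,r)$, inserting the ansatz into $\el\psi^+=\xi\psi^+$ gives, after a short computation,
\begin{equation}\label{sigma-pde}
\frac{q^2}{r^2}\big(\sigma_{qq}+2i\sigma_q\big)+\frac{2q}{r}\big(\sigma_{qr}+i\sigma_r\big)+\sigma_{rr}=\Big(\frac{3}{4r^2}+V(r)\Big)\sigma,
\end{equation}
equivalently, for each fixed $\xi$, that $\tilde\sigma(r)=\sigma(r\xi^{1/2},r)$ solves the linear ODE $\tilde\sigma''+2i\xi^{1/2}\tilde\sigma'-(\tfrac3{4r^2}+V)\tilde\sigma=0$ with $\tilde\sigma(r)\to1$ as $r\to\infty$.

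Next I would record the structure of $V$. By Lemma \ref{lemma-Q}(iii), $\rmx-Q(r)=r^{-1}\widetilde{\mathcal Q}(r^{-2})$, while $f'(\rho)-1=g'(\rho)^2-1+g(\rho)g''(\rho)$ vanishes to second order at $\rho=\rmx$ and is an even analytic function of $\rmx-\rho$ (using $g(\rmx)=0$, $|g'(\rmx)|=1$, and the expansions of $g,g'$ near $\rmx$); hence $1-f'(Q(r))=r^{-2}\widetilde W(r^{-2})$ and $V(r)=-r^{-2}(1-f'(Q(r)))=r^{-4}\widetilde W(r^{-2})$ with $\widetilde W$ real-analytic near $0$. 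In particular $V$ is bounded on $(0,\infty)$, analytic at $r=\infty$ with convergent expansion in $r^{-2}$, and $\sup_{r>0}|(r\partial_r)^kV(r)|\lesssim_k1$ with the extra decay $|(r\partial_r)^kV(r)|\lesssim_k r^{-4}$ for $r\ge1$. With this in hand I would build the formal series: plug $\sigma\sim\sum_{j\ge0}q^{-j}\psi^+_j(r)$, $\psi^+_0\equiv1$, into \eqref{sigma-pde} and match powers of $q$. Matching $q^{-m}$ yields a first order linear ODE relating $\psi^+_{m+1}$ to $\psi^+_m$,
\begin{equation}\label{psi-recursion}
\partial_r\!\big(r^{-(m+1)}\psi^+_{m+1}\big)=\frac{r^{-m}}{2i}\Big[\big(\tfrac3{4r^2}+V\big)\psi^+_m-(\psi^+_m)''+\tfrac{2m}{r}(\psi^+_m)'-\tfrac{m(m+1)}{r^2}\psi^+_m\Big],
\end{equation}
which I solve by integrating from $r=\infty$. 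An induction on $m$ then shows each $\psi^+_j$ is bounded on $(0,\infty)$, analytic at infinity with an expansion in $r^{-2}$, and satisfies $\sup_{r>0}|(r\partial_r)^k\psi^+_j|<\infty$ for all $k$: if $\psi^+_m$ has these properties the bracket in \eqref{psi-recursion} is a symbol of order $-2$ analytic at infinity, so $s^{-m}$ times it is integrable at infinity and its antiderivative from $\infty$ is a symbol of order $-(m+1)$, which the prefactor $r^{m+1}$ restores to order $0$; the even $s^{-1}$-expansion of the bracket combines with $r^{m+1}$ to give an even $r^{-1}$-expansion of $\psi^+_{m+1}$. Since $V=\oh(r^{-4})$, \eqref{psi-recursion} gives $\psi^+_1(r)=-\tfrac1{2i}\int_\infty^r(\tfrac3{4s^2}+V)\,ds=\tfrac{3i}{8}+\tfrac1{2i}r\!\int_r^\infty\!V(s)\,ds=\tfrac{3i}{8}+\oh((1+r^2)^{-1})$, and $\psi^+_0=1$, as claimed.

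Finally I would show the genuine solution is asymptotic to the formal series. Fix $j_0$, put $\sigma_{j_0}=\sum_{j=0}^{j_0}q^{-j}\psi^+_j$; by construction of \eqref{psi-recursion}, feeding $\xi^{-1/4}e^{ir\xi^{1/2}}\sigma_{j_0}$ into $\el-\xi$ leaves a remainder of the form $\xi^{-1/4}e^{ir\xi^{1/2}}q^{-j_0}r^{-2}\mathcal E_{j_0}(q,r)$, where $\mathcal E_{j_0}$ is bounded with $(r\partial_r)^\alpha(q\partial_q)^\beta\mathcal E_{j_0}$ bounded for $q>1$ (its pieces are $\tfrac3{4r^2}r^2$, $r^2(\psi^+_{j_0})''$, etc., times symbols). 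Thus $\sigma-\sigma_{j_0}$ solves the inhomogeneous version of \eqref{sigma-pde} with this source and vanishes at $r=\infty$; inverting the homogeneous operator by variation of parameters around the fundamental pair $\xi^{1/4}e^{-ir\xi^{1/2}}\psi^\pm(r,\xi)$, integrating the bounded branch from $r=\infty$, one obtains a Volterra integral equation for $\sigma-\sigma_{j_0}$. Using that $\psi^\pm(\cdot,\xi)=\oh(\xi^{-1/4})$ uniformly for $r^2\xi\gtrsim1$ and the $r^{-2}$ decay of the source (which forces $\int_r^\infty s^{-2}(s\xi^{1/2})^{-j_0}\,ds\lesssim q^{-j_0-1}\xi^{1/2}$), a contraction argument gives $|\sigma-\sigma_{j_0}|\lesssim q^{-j_0-1}$ uniformly in $r$, and differentiating the Volterra equation and reinserting the symbol bounds on $\mathcal E_{j_0}$, $V$ and the $\psi^+_j$ yields $\sup_{r>0}|(r\partial_r)^\alpha(q\partial_q)^\beta(\sigma-\sigma_{j_0})|\le c_{\alpha,\beta,j_0}q^{-j_0-1}$ for $q>1$; since $j_0$ is arbitrary, the expansion follows. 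The main obstacle is precisely this last step: establishing the Volterra estimate (and propagating the derivative bounds) uniformly in $\xi$ down to the boundary $r^2\xi\sim1$, the nuisance being that the $\tfrac3{4r^2}$ part of the potential decays only quadratically, so that one must exploit the oscillation $e^{2ir\xi^{1/2}}$ to close the iteration; this, however, goes through verbatim as in \cite{KST}, the bounds on $V$ established above being the exact analogue of the ones used there.
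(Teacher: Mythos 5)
Your construction of the formal series is essentially the same as the paper's: your recursion \eqref{psi-recursion} is exactly the paper's recursion for the $f_j$ after the substitution $\psi_j^+=r^jf_j$ (plug $\psi_m^+=r^mf_m$ into your bracket and check that the lower-order terms telescope to leave $r^m\big[(\tfrac3{4r^2}+V)f_m-f_m''\big]$, recovering $2i\partial_rf_{m+1}=(-\partial_r^2+\tfrac3{4r^2}+V)f_m$). Your symbol-calculus argument for the uniform bounds $\sup_{r>0}|(r\partial_r)^k\psi_j^+|<\infty$ and the structure of $V$ are the same as the paper's (the paper phrases the bound as $|(r\partial_r)^kf_j|\lesssim_jr^{-j}$). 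Minor slip: in your explicit formula for $\psi_1^+$ the first expression is missing the overall factor of $r$ and the sign of $\tfrac1{2i}$ is reversed relative to the correct $\psi_1^+=\tfrac{3i}{8}+\tfrac{ir}{2}\int_r^\infty V$; the conclusion $\psi_1^+=\tfrac{3i}{8}+\oh((1+r^2)^{-1})$ is nonetheless right.

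The genuine gap is in the remainder step. You propose to invert ``by variation of parameters around the fundamental pair $\xi^{1/4}e^{-ir\xi^{1/2}}\psi^\pm(r,\xi)$'' and then to use that $\psi^\pm=\oh(\xi^{-1/4})$ \emph{uniformly on} $r^2\xi\gtrsim1$ in the resulting contraction. That uniform bound on $\psi^\pm$ in the joint region $r^2\xi\gtrsim1$ is precisely the content of the proposition you are proving (Theorem \ref{tgz} only gives pointwise asymptotics as $r\to\infty$ at fixed $\xi$), so the argument is circular as written. There is also an internal inconsistency: variation of parameters around the \emph{exact} fundamental system yields an explicit integral representation, not a Volterra equation; a Volterra equation arises only if you invert a simpler (constant-coefficient) operator and treat $\tfrac3{4r^2}+V$ as a perturbation, and then you cannot use $\psi^\pm$ bounds in the iteration. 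The paper avoids all of this: it constructs a Borel sum $\sigma_{ap}$ (so that the source $e$ decays like $\xi^{-j/2}r^{-j-2}$ for \emph{all} $j$), rewrites the remainder equation as a first-order $2\times2$ system for $\vec v=(\sigma_1,r\partial_r\sigma_1)$, and applies Gronwall directly to get $|\vec v(r)|\leq\int_r^\infty(s/r)^Cs|e(s)|\,ds\lesssim q^{-j}$ with $j$ arbitrary. This is self-contained, does not invoke any a priori control of $\psi^\pm$, and the arbitrary power in $j$ is needed to absorb the $(s/r)^C$ factor; if you use a finite truncation $\sigma_{j_0}$ instead, the Gronwall argument only yields $\oh(q^{-j_0})$ (not $q^{-j_0-1}$), and you would have to prove the result for a longer truncation and peel off the extra terms afterwards, another detail your write-up elides.
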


\begin{proof}

Let 
\begin{equation}
\sigma(q.r)=\xi^{1/4}\psi^+(r,\xi)e^{-ir\xi^{1/2}}.
\end{equation}
Since $\psi^+$ solves the equation
\begin{equation}
(\el-\xi)\psi^+(r,\xi)=0,
\end{equation}
it follows that $\sigma$ has to solve
\begin{equation}\label{sigma-equation}
\left(-\partial_r^2-2i\xi^{1/2}\partial_r+\frac{3}{4r^2}+V(r)\right)\sigma(r\xi^{1/2},r)=0.
\end{equation}
First look for a formal power series solution to this equation:
\begin{equation}
\sigma=\sum_{j=0}^\infty\xi^{-1/2}f_j(r),
\end{equation}
which would require that the $f_j$ satisfy
\begin{equation}\label{eq-fj}
2i\partial_rf_j=\left(-\partial_r^2+\frac{3}{4r^2}+V(r)\right)f_{j-1},\quad f_0=1.
\end{equation}
Then
\begin{equation}\label{fj-recurrence}
f_j(r)=\frac{i}{2}\partial_rf_{j-1}+\frac{i}{2}\int_r^\infty\left(\frac{3}{4s^2}+V(s)\right)f_{j-1}(s)\;ds.
\end{equation}
Inductively, it is easy to see (recalling also that $V(r)\sim r^{-4}$ at $s=\infty$) that all $f_j$ are analytic at infinity, 
with leading order term $r^{-j}$. At zero however, the $f_j$ will be singular. Using \eref{fj-recurrence} it is not hard to show, inductively, that
\begin{equation}
\left|(r\partial_r)^kf_j\right|\leq c_jr^{-j},\quad \forall k\in\mathbb{N},r>0.
\end{equation}
Indeed, suppose the claim is true for $f_{j-1}$. Then
\begin{multline}
(r\partial_r)^kf_j=\frac{i}{2}(r\partial_r)^k\partial_rf_{j-1}(r)-\frac{i}{2}(r\partial_r)^{k-1}\left(\frac{3}{4r^2}+V(r)\right)f_{j-1}(r)\\
=\frac{i}{2}\frac{1}{r}(r\partial_r)^{k+1}f_{j-1}(r)+\frac{i}{2}\left[(r\partial_r)^k,\frac{1}{r}\right](r\partial_r)f_{j-1}(r)\\
-\frac{i}{2}\left(\frac{3}{4r^2}+V(r)\right)(r\partial_r)^{k-1}f_{j-1}(r)\\-\frac{i}{2}\left[(r\partial_r)^{k-1},\frac{3}{4r^2}+V(r)\right]f_{j-1}(r).
\end{multline}
Noting that
\begin{equation}
\left[(r\partial_r)^k,\frac{1}{r}\right]=-\frac{k}{r}(r\partial_r)^{k-1},
\end{equation}
\begin{equation}
\left[(r\partial_r)^{k-1},\frac{3}{4r^2}\right]=-\frac{6(k-1)}{4r^2}(r\partial_r)^{k-2},
\end{equation}
\begin{equation}
\left[(r\partial_r)^{k-1},V(r)\right]=(k-1)rV'(r)(r\partial_r)^{k-2},
\end{equation}
the induction is complete. Let then $\psi_j^+(r)=r^jf_j(r)$. These will  satisfy \eref{psi-j-bound}.

It is known from symbol calculus that there exists a function $\sigma_{ap}(q,r)$ which satisfies
\begin{equation}
\sup_{r>0}\left|(r\partial_r)^\alpha(q\partial_q)^\beta\left[\sigma_{ap}(q,r)-\sum_{j=0}^{j_0} q^{-j}\psi^+_j(r)\right]\right|
\leq c_{\alpha,\beta, j_0}q^{-j_0-1}
\end{equation}
for all natural numbers $\alpha$, $\beta$, and $j_0$.  However, $\sigma_{ap}$ will not solve \eref{sigma-equation}. Define the error
\begin{equation}
e(r\xi^{1/2},r)=\left(-\partial_r^2-2i\xi^{1/2}\partial_r+\frac{3}{4r^2}+V(r)\right)\sigma_{ap}(r\xi^{1/2},r).
\end{equation}
It is easy to see that
\begin{equation}
\left|(r\partial_r)^\alpha(q\partial_q)^\beta e(q,r) \right|\leq c_{\alpha,\beta,j}r^{-2}q^{-j},
\end{equation}
for all $\alpha$, $\beta$, and $j$.  Let $\sigma_1=-\sigma+\sigma_{ap}$. This $\sigma_1$ has to satisfy
\begin{equation}\label{sigma-1-equation}
\left(-\partial_r^2-2i\xi^{1/2}\partial_r+\frac{3}{4r^2}+V(r)\right)\sigma_1(r\xi^{1/2},r)=e(r\xi^{1/2},r).
\end{equation}

To obtain estimates on $\sigma_1$, first define $\vec v=(v_1,v_2)=(\sigma_1,r\partial_r\sigma_1)$. The equation \eref{sigma-1-equation} can be written in terms of $\vec v$ as
\begin{equation}
\partial_r\vec v-
\left(  
\begin{array}{cc}
0&r^{-1}\\
\frac{3}{4r}+rV(r)&r^{-1}-2i\xi^{1/2}
\end{array}
\right)\vec v=\left(\begin{array}{c}0\\-re \end{array}\right).
\end{equation}
From this it follows that
\begin{equation}
\frac{d}{dr}|\vec v|^2\geq -C\left(r^{-1}|\vec v|^2+r|\vec v|\,|e| \right),
\end{equation}
so
\begin{equation}
\frac{d}{dr}|\vec v|\geq -C\left(r^{-1}|\vec v|+r|e| \right).
\end{equation}
By Gronwall's inequality,
\begin{equation}
|\vec v(r)|\leq\int_r^\infty\left(\frac{s}{r}\right)^Cs|e(s)|\;ds.
\end{equation}
For large $j$,
\begin{equation}
|e|<C\xi^{-j/2}r^{-j-2},
\end{equation}
which implies that
\begin{equation}
|\vec v|<C_j\xi^{-j/2}r^{-j}=C_jq^j,
\end{equation}
for large $j$. Entirely similar arguments can be applied to $(r\partial_r)^\alpha(q\partial_q)^\beta\vec v$, to conclude in the end that
\begin{equation}
\left|(r\partial_r)^\alpha(q\partial_q)^\beta \sigma_1(q,r)\right|\leq C_{\alpha,\beta,j}q^{-j},
\end{equation}
for large $j$ and any $\alpha$ and $\beta$. Then $\sigma=\sigma_{ap}-\sigma_1$ is as desired.
\end{proof}

The last result of this section deals with the spectral measure of $\el$.
\begin{pp}
\begin{itemize}
\item[i)] There is a function $a(\xi)$ such that
\begin{equation}
\phi(r,\xi)=a(\xi)\psi^+(r,\xi)+\overline{a(\xi)\psi^+(r,\xi)}
\end{equation}
which is smooth, always nonzero, and has size\footnote{By $a\asymp b$ it is meant that there is a positive constant $C$ such that $C^{-1}a<b<Ca$.}
\begin{equation}\label{size-of-a}
|a(\xi)|\asymp\left\{\begin{array}{cl}
-\xi^{1/2}\log\xi\quad&\xi\ll1\\
\xi^{-1/2}&\xi\gtrsim1
\end{array}\right..
\end{equation}
Moreover, this function satisfies the bounds
\begin{equation}\label{symbol-a-bounds}
\left|(\xi\partial_\xi)^ka(\xi)\right|\leq c_k|a(\xi)|,\quad\forall\xi>0.
\end{equation}
\item[ii)] The spectral measure $\rho(\xi)d\xi$ has density
\begin{equation}
\rho(\xi)=\frac{1}{\pi}|a(\xi)|^{-2}
\end{equation}
and therefore satisfies
\begin{equation}
\rho(\xi)\asymp\left\{\begin{array}{cl}
\frac{1}{\xi(\log\xi)^2}\quad&\xi\ll1\\
\xi&\xi\gtrsim1
\end{array}\right..
\end{equation}
\end{itemize}
\end{pp}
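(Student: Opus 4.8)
The plan is to realize $a(\xi)$ as a Wronskian of $\phi$ against the Weyl solution $\psi^+$, deduce its smoothness and non-vanishing, read off its size by evaluating $\phi$ and $\partial_r\phi$ at the natural scale $r\sim\xi^{-1/2}$, and obtain part (ii) from a Wronskian identity for the Weyl function. For the decomposition: by Theorem \ref{tgz}(ii), for $\xi>0$ the functions $\psi^+(\cdot,\xi)$ and $\psi^-(\cdot,\xi)=\overline{\psi^+(\cdot,\xi)}$ form a fundamental system for $\el-\xi$, and reading the Wronskian off from $\psi^\pm(r,\xi)\sim\xi^{-1/4}e^{\pm i\xi^{1/2}r}$ gives the constant value $W(\psi^+,\psi^-)=-2i$. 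Since $\phi(\cdot,\xi)$ is real (Theorem \ref{tgz}(i)), its expansion in this basis has conjugate coefficients, so $\phi=a\psi^++\overline{a\psi^+}$ with $a(\xi)=\tfrac{i}{2}W(\phi(\cdot,\xi),\psi^-(\cdot,\xi))$. Evaluating this Wronskian at a fixed $r$ with $r^2\xi\gtrsim1$, where $\phi(r,\cdot)$ is analytic (Theorem \ref{tgz}(i)) and $\psi^-(r,\cdot)$ carries symbol-type control in $\xi$ (Proposition \ref{psi-expansion-proposition}), shows $a\in C^\infty(0,\infty)$; and $a$ is nowhere zero, since $a(\xi_0)=0$ would make $\phi(\cdot,\xi_0)\equiv0$, contradicting $\phi\sim r^{3/2}$.

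For the size of $a$ I would work at $r_\xi=C_0\xi^{-1/2}$ with $C_0$ a large fixed constant. On one hand, $r_\xi^2\xi=C_0^2\gtrsim1$, so Proposition \ref{psi-expansion-proposition} gives $\psi^+(r_\xi,\xi)=\xi^{-1/4}e^{iC_0}(1+O(C_0^{-1}))$ and $\partial_r\psi^+(r_\xi,\xi)=i\xi^{1/4}e^{iC_0}(1+O(C_0^{-1}))$; matching $\phi=2\re(a\psi^+)$ and its $r$-derivative against these shows, for $C_0$ large enough that the $O(C_0^{-1})$ off-diagonal terms are absorbed, that $|a(\xi)|\asymp\xi^{1/4}|\phi(r_\xi,\xi)|+\xi^{-1/4}|\partial_r\phi(r_\xi,\xi)|$. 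On the other hand the convergent series $\phi(r,\xi)=\phi_0(r)+r^{-1/2}\sum_{j\ge1}(r^2\xi)^j\phi_j(r^2)$ of Proposition \ref{phi-expansion-proposition} has all terms positive (the $f_j$ constructed there are positive). When $\xi\gtrsim1$ then $r_\xi\lesssim1$, the series is dominated by $\phi_0$, and $\phi(r_\xi,\xi)\asymp r_\xi^{3/2}\asymp\xi^{-3/4}$, $\partial_r\phi(r_\xi,\xi)\asymp\xi^{-1/4}$, whence $|a(\xi)|\asymp\xi^{-1/2}$. When $\xi\ll1$ then $r_\xi^2\gg1$, and by the two-sided bound $\phi_1(u)\asymp\log(1+u)$ of Proposition \ref{phi-expansion-proposition} the $j=1$ term $r_\xi^{3/2}\xi\,\phi_1(r_\xi^2)\asymp\xi^{1/4}|\log\xi|$ dominates $\phi_0(r_\xi)\asymp\xi^{1/4}$ while the remaining positive terms are summable and of the same or lower order, so $\phi(r_\xi,\xi)\asymp\xi^{1/4}|\log\xi|$ and, analogously, $\partial_r\phi(r_\xi,\xi)\asymp\xi^{3/4}|\log\xi|$, whence $|a(\xi)|\asymp-\xi^{1/2}\log\xi$. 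This gives \eref{size-of-a}. For \eref{symbol-a-bounds} I would apply $\xi\partial_\xi$ to these relations: on the $\phi$-series $\xi\partial_\xi$ multiplies the $j$-th term by $j$, which is absorbed by the $1/(j-1)!$ of Proposition \ref{phi-expansion-proposition}, and at $r=r_\xi$ the variable $q=r\xi^{1/2}=C_0$ stays bounded so $\xi\partial_\xi\psi^\pm(r_\xi,\xi)$ remains controlled by Proposition \ref{psi-expansion-proposition}; an induction on the order of the derivative completes it.

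For part (ii) I would use Theorem \ref{tgz}(iii): $\rho(\xi)=\tfrac1\pi\im m(\xi+i0)$ with $m(z)=W(\theta,\psi^+)/W(\psi^+,\phi)$. Substituting $\phi=a\psi^++\overline{a\psi^+}$ gives $W(\psi^+,\phi)=\overline a\,W(\psi^+,\psi^-)=-2i\overline a$, while $W(\theta,\phi)=1$ forces $2\re(a\,W(\theta,\psi^+))=1$; combining these produces the identity $\rho(\xi)=\tfrac1\pi|a(\xi)|^{-2}$, and the asymptotics of $\rho$ then follow at once from those of $a$. The hard part will be the low-energy regime in the size estimate: there $r_\xi^2\gg1$, the whole series contributes, and the logarithm has to be extracted from it — this succeeds only because $\el$ carries the threshold resonance $\phi_0\sim r^{-1/2}$ and the centrifugal coefficient is exactly $1$, the very features that forced the logarithmic lower bound $\phi_1(u)>C\log u$ in Proposition \ref{phi-expansion-proposition}; one must also check with some care that $\partial_r\phi(r_\xi,\xi)$ has the stated size, so that the full $|a(\xi)|$, not merely the part seen by $\phi(r_\xi,\xi)$, is pinned down on both sides.
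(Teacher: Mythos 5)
Your proposal is correct and follows essentially the same route as the paper: define $a$ through the Wronskian $a=\tfrac{i}{2}W(\phi,\psi^-)$, read off its size by evaluating $\phi$, $\partial_r\phi$, $\psi^\pm$, $\partial_r\psi^\pm$ at the scale $r\sim\xi^{-1/2}$ via Propositions \ref{phi-expansion-proposition} and \ref{psi-expansion-proposition}, and derive $\rho=\tfrac1\pi|a|^{-2}$ from the Weyl formula \eref{spectral-measure-a}--\eref{spectral-measure-b}. The one organizational difference is in how the size estimate is packaged: you invert the $2\times2$ system $(\phi,\partial_r\phi)=(2\re(a\psi^+),2\re(a\partial_r\psi^+))$ at $r_\xi=C_0\xi^{-1/2}$ to get a two-sided relation $|a|\asymp\xi^{1/4}|\phi(r_\xi,\xi)|+\xi^{-1/4}|\partial_r\phi(r_\xi,\xi)|$, and then exploit the positivity of the $\phi_j$-series to avoid cancellation; the paper obtains the upper bound by a symbol-composition argument (which has the advantage of delivering \eref{symbol-a-bounds} in the same stroke) and the lower bound separately from the identity $\im(\psi^+\partial_r\psi^-)=-1$, which gives $|a|\geq\tfrac{|\partial_r\phi|}{2|\partial_r\psi^+|}$ and only requires the lower bound on $\partial_r\phi$. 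Both routes rest on the same underlying facts. One shared loose end worth being aware of: both you and the paper use pointwise control on $\partial_r\phi$ at $r\sim\xi^{-1/2}$, which requires bounds on $\phi_j'$ that go slightly beyond what Proposition \ref{phi-expansion-proposition} literally states (it only bounds $\phi_j$ and $\phi_j'(0)$); this can be supplied by differentiating the recursion for $f_j$ or by Cauchy estimates in the analyticity strip, but it is asserted rather than proved in both arguments.
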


\begin{proof}
 i) Since $\phi$ is real valued and $W(\psi^+,\psi^-)=-2i$, the function $a$ must be
\begin{equation}
a(\xi)=-\frac{i}{2}W(\phi(\cdot,\xi),\psi^-(\cdot,\xi)).
\end{equation}
By Proposition \ref{phi-expansion-proposition} it follows that both $\phi(\xi^{-1/2},\xi)$ and $(r\partial_r\phi)(\xi^{-1/2},\xi)$ can be written in the form $\xi^{1/4}f(\xi^{-1})$ with $f(u)$ analytic and satisfying
\begin{equation}
|f(u)|\lesssim \log(1+|u|).
\end{equation}
By Proposition \ref{psi-expansion-proposition} it follows that both $\psi^+(\xi^{-1/2},\xi)$ and $(r\partial_r\psi^+)(\xi^{-1/2},\xi)$ can be written in the form $\xi^{-1/4}h(\xi^{-1/2})$ with $h$ satisfying the bounds
\begin{equation}
|(r\partial_r)^kh(r)|\leq c_k.
\end{equation}
Then the function $a$ is a sum of terms of the form $\xi^{1/2}f(\xi^{-1})h(\xi^{-1/2})$, with $f$ and $h$ as above. The bounds \eref{symbol-a-bounds} and
the upper bounds in \eref{size-of-a} then follow.

To prove the lower bounds, begin by noting that
\begin{equation}
\im(\psi^+(r,\xi)\partial_r\psi^-(r,\xi))=-1.
\end{equation}
Since $\phi$ is real-valued, this gives
\begin{equation}
\im\left[\partial_r\psi^+(r,\xi)W(\phi(\cdot,\xi),\psi^-(\cdot,\xi))\right]=-\partial_r\phi(r,\xi),
\end{equation}
which implies that for all $r$
\begin{equation}
|a(\xi)|\geq\frac{|\partial_r\phi(r,\xi)|}{2|\partial_r\psi^+(r,\xi)|}.
\end{equation}
There is a small constant $\delta$ such that, if $r=\delta\xi^{-1/2}$, by Proposition \ref{phi-expansion-proposition}
\begin{equation}
|\partial_r\phi(r,\xi)|\gtrsim r^{-3/2}\log(1+r^2),
\end{equation}
and by Proposition \ref{psi-expansion-proposition}
\begin{equation}
|\partial_r\psi^+(r,\xi)|\lesssim\xi^{1/4}(r^2\xi)^{-j_0}.
\end{equation}
These give the lower bounds in \eref{size-of-a}.

ii) $\psi^+$ can be written in terms of $\phi$ and $\theta$ as
\begin{equation}
\psi^+=-\phi W(\psi^+,\theta)+\theta W(\psi^+,\phi).
\end{equation}
Since both $\phi$ and $\theta$ are real-valued, inserting into $W(\psi^+,\psi^-)=-2i$, it follows that
\begin{equation}
\im\left[W(\psi^+,\theta)W(\psi^-,\phi)\right]=-1.
\end{equation}
Inserting into \eref{spectral-measure-a} and \eref{spectral-measure-b}, this yields
\begin{equation}
\rho(\xi)=\frac{1}{\pi}\frac{\im\left[W(\psi^+,\theta)W(\psi^-,\phi)\right]}{|W(\psi^+,\phi)|^2}=\frac{1}{\pi}|W(\psi^+,\phi)|^{-2}=\frac{1}{\pi|a(\xi)|^2}.
\end{equation}
\end{proof}

\bibliographystyle{plain}

\bibliography{unu}

\end{document}